\documentclass[a4paper,twoside,11pt,leqno]{article}

\usepackage[utf8]{inputenc}
\usepackage[OT2,T1]{fontenc}
\usepackage{amsmath, amssymb, amsthm}
\usepackage{bm, mathtools}
\usepackage{newtxtext, newtxmath}
\usepackage{nicefrac}
\usepackage[babel=true]{csquotes}
\usepackage[english]{babel}
\usepackage[margin=3.5cm]{geometry}
\usepackage{fancyhdr}

\usepackage{graphicx}
\usepackage{xcolor}
\usepackage{pgfplots}
\usepackage{pinlabel}

\usepackage{enumitem}
\usepackage{array,multirow}
\usepackage{makecell}

\usepackage{upref}
\usepackage{hyperref}
\usepackage{caption}
\usepackage{cite}

\hypersetup{%
   colorlinks,
   breaklinks,
   linkcolor=black,
   citecolor=black,
   filecolor=black,
   urlcolor=black
}

\mathtoolsset{%
   showonlyrefs,
   showmanualtags
}

\pgfplotsset{compat=1.5}
\pgfplotsset{
   standard/.style={
      axis x line=middle,
      axis y line=middle,
      enlarge x limits=0.05,
      enlarge y limits=0.05,
      every axis plot post/.style={mark options={fill=white}}
   }
}
\usepgfplotslibrary{external}

\definecolor{mygray}{RGB}{205,205,205}



\numberwithin{equation}{section}

\fancypagestyle{plain}{%
   \fancyhf{} 
}

\fancypagestyle{main}{%
   \fancyhf{} 
   \fancyhead[LE,RO]{\textbf{\thepage}}
   \fancyhead[RE]{\textit{\shortauthors}}
   \fancyhead[LO]{\textit{\shorttitle}}
}

\newcommand{\figref}[1]{Fig.~\ref{#1}}
\newcommand{\defref}[1]{Def.~\ref{#1}}
\newcommand{\teqref}[1]{Eq.~\eqref{#1}}

\newcommand{\secref}[1]{Sec.~\ref{#1}}

\newcommand{\thmref}[1]{Thm.~\ref{#1}}
\newcommand{\propref}[1]{Prop.~\ref{#1}}
\newcommand{\lemref}[1]{Lem.~\ref{#1}}

\newcommand{\probref}[1]{Problem~\ref{#1}}
\newcommand{\remref}[1]{Remark~\ref{#1}}

\def\etal{\emph{et al.}}

\def\ie{\emph{i.e.}}
\def\eg{\emph{e.g.}}

\def\wrt{\emph{w.r.t.}}

\newcommand\blfootnote[1]{%
  \begingroup
  \renewcommand\thefootnote{}\footnote{#1}%
  \addtocounter{footnote}{-1}%
  \endgroup
}


\theoremstyle{definition}
\newtheorem{theorem}{Theorem}[section]

\newtheorem{lemma}{Lemma}[section]
\newtheorem{corollary}[lemma]{Corollary}
\newtheorem{proposition}[lemma]{Proposition}

\newtheorem{propdef}[lemma]{Proposition\,\&\,Definition}
\newtheorem{definition}[lemma]{Definition}
\newtheorem*{definition*}{Definition}

\newtheorem{problem}[lemma]{Problem}

\theoremstyle{remark}
\newtheorem{remark}{Remark}[section]



\newcommand{\RR}{\mathbb{R}}
\newcommand{\CC}{\mathbb{C}}

\newcommand{\HH}{\mathbb{H}}


\DeclareMathOperator{\Herm}{Herm}

\DeclareMathOperator{\SO}{SO}

\DeclareMathOperator{\SL}{SL}


\newcommand{\diff}[2]{\mathrm{d}_{#1}{#2}}
\newcommand{\tdiff}[1]{\frac{\diff{}{}}{\diff{}{#1}}}
\newcommand{\pdiff}[2]{\frac{\partial#1}{\partial#2}}
\newcommand{\ppdiff}[3]{\frac{\partial#1}{\partial#2\partial#3}}


\DeclareMathOperator{\tr}{tr}
\DeclareMathOperator{\dist}{dist}

\DeclareMathOperator{\conv}{conv}
\DeclareMathOperator{\vol}{Vol}

\newcommand{\upT}{\mathsf{T}}
\DeclareMathOperator{\HE}{\mathcal{H}}

\newcommand{\ip}[2]{\langle #1, #2 \rangle}


\newcommand{\ee}{\mathrm{e}}
\newcommand{\ii}{\bm{i}}
\newcommand{\surf}{\Sigma}

\newcommand{\eucsurf}{\mathcal{S}}

\newcommand{\tri}{\mathcal{T}}
\newcommand{\verts}{V}
\newcommand{\edges}{E}

\newcommand{\faces}{F}
\newcommand{\corner}[2]{\vphantom{\phi}_{#2}^{#1}}
\newcommand{\len}{\ell}
\newcommand{\cotw}{w}
\newcommand{\lob}{
   \mbox{\fontencoding{OT2}\fontfamily{wncyr}\fontseries{m}\fontshape{n}\selectfont L}
}

\begin{document}
\thispagestyle{plain}
\title{
   Decorated discrete conformal maps\\
   and convex polyhedral cusps
}
\newcommand{\shorttitle}{%
   Decorated discrete conformal maps and convex polyhedral cusps
}

\author{%
   \textsc{Alexander~I.~Bobenko}\\[0.1cm]
   \textsc{Carl~O.~R.~Lutz}
}
\newcommand{\shortauthors}{%
   A.~I.~Bobenko and C.~O.~R.~Lutz
}

\date{}

\maketitle

\begin{abstract}
   We discuss a notion of discrete conformal equivalence for decorated piecewise
   euclidean surfaces (PE-surface), that is, PE-surfaces with a choice of circle
   about each vertex. It is closely related to inversive distance and hyperideal
   circle patterns. Under the assumption that the circles are non-intersecting,
   we proof the corresponding discrete uniformization theorem.
   The uniformization theorem for discrete conformal maps corresponds to
   the special case that all circles degenerate to points.
   Our proof relies on an intimate relationship between decorated PE-surfaces,
   canonical tessellations of hyperbolic surfaces and convex hyperbolic polyhedra.
   It is based on a concave variational principle, which also provides a method
   for the computation of decorated discrete conformal maps.\\

   \noindent
   \textit{MSC (2020).} Primary 52C26; Secondary 57M50, 52B70, 52B10.

   \noindent
   \textit{Key words and phrases.} Discrete conformal maps,
   discrete uniformization, circle patterns, polyhedral realization,
   hyperbolic polyhedra, variational principle, variable combinatorics.
   \blfootnote{\textit{Date}: \today}
\end{abstract}

\pagestyle{main}

\section{Introduction}
Circle patterns, discrete conformal maps, hyperbolic polyhedra, and their
relationship are now a well established and flourishing theory.
Modern interest in this subject started with \textsc{W.~Thurston}'s idea to use
circle patterns at the ideal boundary of hyperbolic $3$-space as an elementary tool
to visualize hyperbolic polyhedra \cite[Chpt.~13]{Thurston1978}. He rediscovered
\textsc{P.~Koebe}'s circle packing theorem \cite{Koebe1936} and connected it to the
work of \textsc{E.~Andreev} on compact convex hyperbolic polyhedra
\cite{Andreev1970, andreev1970a}. Subsequently, \textsc{B.~Rodin}'s and
\textsc{D.~Sullivan}'s proof of \textsc{Thurston}'s conjecture that circle packings
could be used to approximate the classical Riemann map \cite{RS1987}, set off a
flurry of research on discrete analytic functions and conformal
maps based on packings and patterns of circles \cite{Stephenson2005}.

In parallel, another approach to discrete conformal equivalence of triangulated
piecewise euclidean surfaces (PE-surfaces) appeared: the discrete
conformal change of metric is to multiply all edge lengths with scale factors
that are associated with the vertices. Apparently, this idea was first considered
in the four-dimensional Lorentz-geometric context of \textsc{T.~Regge}'s discrete
approach to general relativity \cite{Regge1961, RW1984}.
The Riemann-geometric version
of this notion was introduced by \textsc{F.~Luo}
\cite{Luo2004}.

Informally speaking, a triangulated PE-surface is a surface $\eucsurf_g$ glued
edge-to-edge from euclidean triangles (see \secref{sec:pe_surfaces_and_notation}).
It is determined by a triangulation $\tri$
with vertex set $\verts\subset\eucsurf_g$ and \emph{edge-lengths} $\len_{ij}$ on the
edges $ij$ of $\tri$. Now, two triangulated PE-surfaces $(\tri, \len)$ and
$(\tri, \tilde{\len})$ are said to be \emph{discrete conformally equivalent} if there
are \emph{logarithmic scale factors}
$u\in\RR^{\verts}$ such that
\begin{equation}\label{eq:classical_dce}
   \tilde{\len}_{ij}
   \;=\;
   \ee^{\nicefrac{(u_i+u_j)}{2}}\,\len_{ij}
\end{equation}
for all edges $ij$. This gives a rich theory, including convex variational principles
and uniqueness results for the corresponding discrete mapping problems. The
reason for this is the connection between discrete conformal equivalence, hyperbolic
cusp surfaces, and \textsc{R.~Penner}'s decorated Teichm\"uller spaces
(see, \eg, \cite{Penner1987}), discovered by \textsc{U.~Pinkall},
\textsc{B.~Springborn}, and the first author \cite{BPS2015}.

In general, the existence of solutions cannot be guarantee
if the combinatorics are fixed. Thus, using the mentioned interpretation in terms of
hyperbolic geometry, one extends the notion of discrete conformal
equivalence to variable combinatorics \cite{GLS+2018,Springborn2020}. This extension
can be formulated using a sequence of Delaunay triangulations. Furthermore, it
only depend on the PE-metric $\dist_{\eucsurf_g}$ on $\eucsurf_g$, \ie,
the complete euclidean path-metric on $\eucsurf_g$ determined by $(\tri, \len)$.

\textsc{X.~Gu} \etal\ proved the corresponding uniformization theorem
\cite{GLS+2018} (see also \cite{Springborn2020}). It states:
given a PE-metric $\dist_{\eucsurf_g}$ on the marked surface $(\eucsurf_g, \verts)$
and $\Theta\in\RR_{>0}^{\verts}$ satisfying the \emph{Gau{\ss}--Bonnet condition}
\begin{equation}\label{eq:euclidean_gauss_bonnet_condition}
   \frac{1}{2\pi}\sum\Theta_i
   \;=\;
   2g-2\,+\,|\verts|,
\end{equation}
then there exists a unique second PE-metric $\widetilde{\dist}_{\eucsurf_g}$ on
$(\eucsurf_g, \verts)$, up to scale, which is discrete conformally equivalent to
$\dist_{\eucsurf_g}$ and has the desired angle sum $\Theta_i$ about each vertex $i$.

The aim of this article is to generalize this result to decorated PE-surfaces,
\ie, PE-surfaces with a choice of circle about each vertex. They are determined
by the PE-metric $\dist_{\eucsurf_g}$ and radii $r_i\geq0$ at the vertices
(\secref{sec:decorations_and_dce}). Note that \enquote{undecorated} PE-surfaces
correspond to the special choice $r_i=0$ for all vertices. We introduce two new
viewpoints on the discrete
conformal equivalence of such surfaces. The first is based on the M\"obius-geometric
properties of decorated euclidean triangles (\defref{def:dce_via_moebius}).
It provides a framework unifying the approaches to discrete conformal geometry via
circle patterns and vertex-scaling. This framework is closely connected to
\textsc{P.~Bowers}' and \textsc{K.~Stephenson}'s
\emph{inversive distance circle patterns} \cite{BS2004, BH2003},
\textsc{D.~Glickenstein}'s \emph{duality structures} \cite{Glickenstein2011},
and \textsc{M.~Zhang} \etal's \emph{unified discrete Ricci flow} \cite{ZGZ+2014}
(see \secref{sec:decorations_and_dce}).

The second viewpoint uses lifts of decorated PE-surfaces into Minkowski
$3$-space (\secref{sec:discrete_conformal_maps}). They provide
\emph{decorated discrete conformal maps}
(\defref{def:decorated_discrete_conformal_maps}), that is, certain piecewise
projective maps, which posses desirable properties for applications, \eg,
computer graphics \cite{SSP2008, GSC2021}. Furthermore, the lifts give an
interesting interpretation of the
\emph{(generalized) Epstein--Penner convex hull construction} \cite{EP1988, Lutz2023}
(see \remref{rem:epstein_penner_convex_hull}).

Weighted Delaunay tessellations are the analogue of Delaunay tessellations for
decorated PE-surfaces (\secref{sec:weighted_delaunay_tessellations}). Restricting to
hyperideal decorations, \ie, choices of circles at the vertices such
that no pair intersects, they always exist and are uniquely determined by the
decorated PE-metric.
Moreover, we can associate to each triangulated hyperideally
decorated PE-surface a complete hyperbolic surface $\surf_g$ with cusps and complete
ends of infinite area (\secref{sec:lift_to_hyperbolic_space}). In the
undecorated case two PE-surfaces are discrete conformally equivalent if and only
if their Delaunay triangulations induce the same hyperbolic surface $\surf_g$
\cite[Thm.~5.1.2]{BPS2015}. In this spirit, we call $\surf_g$ the
\emph{fundamental discrete conformal invariant} of the decorated
PE-surface if $\surf_g$ is induced by its weighted Delaunay tessellation. Relating
weighted Delaunay tessellations to canonical tessellations of hyperbolic surfaces
(\secref{sec:decorations_and_canonical_tessellations}), we see that two decorated
PE-surfaces are discrete conformally equivalent if and only if their fundamental
discrete conformal invariant coincide (\secref{sec:convex_polyhedra_dce}). This
leads to the main result of the paper.

\begin{theorem}\label{theorem:realisation_euclidean}
   Let $(\dist_{\eucsurf_g}, r)$ be a hyperideally decorated PE-metric on the marked
   genus $g\geq0$ surface $(\eucsurf_g, \verts)$. Denote by $\surf_g$ its fundamental
   discrete conformal invariant, \ie, the complete hyperbolic surface induced by
   any triangular refinement of its unique weighted Delaunay tessellation. Then
   \begin{enumerate}[label={\arabic*.}, wide=0.8\parindent, topsep=0pt]
      \item\emph{(existence of realizations)}
         there exists a decorated PE-metric discrete conformally equivalent to
         $(\dist_{\eucsurf_g}, r)$ realizing $\Theta\in\RR_{>0}^{\verts}$
         if and only if $\Theta$ satisfies the Gau{\ss}--Bonnet condition
         \eqref{eq:euclidean_gauss_bonnet_condition}.
      \item\emph{(uniqueness of realizations)}
         for each $\Theta\in\RR_{>0}^{\verts}$ there exists at most one decorated
         PE-metric discrete conformally equivalent to $(\dist_{\eucsurf_g}, r)$
         realizing $\Theta$, up to scale.
      \item\emph{(variational principle)} The logarithmic scale factors, which give
         to the change of metric, correspond to a maximum point of the concave
         discrete Hilbert--Einstein functional $\HE_{\surf_g,\Theta}$
         (\secref{sec:the_variational_principle}).
   \end{enumerate}
\end{theorem}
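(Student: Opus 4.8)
The plan is to recast all three statements as properties of a single concave function of the logarithmic scale factors, and then read off existence, uniqueness, and the variational characterization. First I would parametrize the discrete conformal class of $(\dist_{\eucsurf_g}, r)$ by $u\in\RR^{\verts}$, where $u$ and $u+c\mathbf{1}$ (for $c\in\RR$) yield the same decorated PE-metric up to scale. Using the correspondence established in the preceding sections between a hyperideally decorated PE-metric, its weighted Delaunay tessellation, and the induced hyperbolic surface, each $u$ determines a realized angle sum $\theta_i(u)$ at every vertex $i$. The discrete Hilbert--Einstein functional $\HE_{\surf_g,\Theta}$ is built so that its gradient is
\begin{equation}
   \nabla\HE_{\surf_g,\Theta}(u) \;=\; \Theta \,-\, \theta(u),
\end{equation}
so that the critical points of $\HE_{\surf_g,\Theta}$ are exactly the scale factors whose associated decorated PE-metric realizes the prescribed angles $\Theta$. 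With this setup the three parts become: the functional has a maximum iff the Gau{\ss}--Bonnet condition holds; the maximizer is unique up to the scaling direction $\mathbf{1}$; and the realizing scale factors are precisely the maximizers.

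The second step is to prove that $\HE_{\surf_g,\Theta}$ is concave, and strictly concave transverse to $\lin\{\mathbf{1}\}$. This is the analytic core of the argument and is where the relationship to convex hyperbolic polyhedra enters. I would compute the Hessian of $\HE_{\surf_g,\Theta}$ via a Schl\"afli-type formula for the hyperbolic polyhedra associated with $\surf_g$; the resulting quadratic form should be negative semi-definite with kernel exactly the scaling direction. The delicate point is that $\theta(u)$ is only piecewise smooth: it has breaks along the walls where the weighted Delaunay tessellation changes combinatorially. I must therefore verify that $\HE_{\surf_g,\Theta}$ stays $C^1$ and concave across these walls, using that a weighted Delaunay flip leaves the relevant geometric quantities continuously matched. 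Strict transverse concavity immediately gives the uniqueness statement.

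For existence, the necessity of the Gau{\ss}--Bonnet condition follows from summing $\theta_i(u)=\Theta_i$ and invoking the combinatorial identity $\sum_i\theta_i=2\pi(2g-2+|\verts|)$ for a triangulated PE-surface. For sufficiency, I would fix the scale by working on $\{u:\sum_i u_i=0\}$ and show that the concave $\HE_{\surf_g,\Theta}$ attains its maximum there; since the domain is a linear space, this reduces to the coercivity statement $\HE_{\surf_g,\Theta}(u)\to-\infty$ as $\|u\|\to\infty$. Equivalently, I would show that the admissible angle data for the associated convex hyperbolic polyhedra form an open polytope cut out precisely by the inequalities $\Theta_i>0$ together with the Gau{\ss}--Bonnet equality, so that every such $\Theta$ lies in the interior of the image of the curvature map $u\mapsto\theta(u)$ and is therefore attained. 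The variational principle is then immediate, since the critical point supplied by the maximum is the sought realization.

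I expect the coercivity analysis to be the main obstacle. As $u$ degenerates the weighted Delaunay tessellation undergoes unboundedly many combinatorial changes, so the functional must be controlled uniformly across all of them; this is the genuine content of the \enquote{variable combinatorics} setting. My plan is to pass to the hyperbolic picture, where degeneration of $u$ corresponds to the horocyclic decorations of $\surf_g$ retreating into the cusps or the infinite-area ends, and to bound the change in $\HE_{\surf_g,\Theta}$ by the hyperbolic volume swept out. This volume should diverge exactly because of the strict positivity $\Theta_i>0$ together with the Gau{\ss}--Bonnet constraint, forcing any maximizing sequence to stay bounded and its limit to be a non-degenerate decorated PE-metric. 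The hyperideal assumption that no two circles intersect is used throughout: it guarantees that the weighted Delaunay tessellation exists and remains non-degenerate, so that $\theta(u)$ and the polyhedral picture are well defined for all $u$.
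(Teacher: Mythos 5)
Your plan follows the paper's proof in all structural respects: a concave discrete Hilbert--Einstein functional whose gradient is $\Theta-\theta$, a Hessian (computed via Schl\"afli's formula) that is negative semi-definite with kernel spanned by the scaling direction $\bm{1}_{\verts}$, necessity of Gau{\ss}--Bonnet from the angle-sum identity, $C^1$-matching across the Delaunay walls, and existence via coercivity on the slice where the scale factors sum to zero. The one place where your sketch would fail as written is the mechanism you propose for coercivity. You want the divergence of $\HE_{\surf_g,\Theta}$ to come from \enquote{the hyperbolic volume swept out}; but the truncated volume of a polyhedral cusp over the fixed surface $\surf_g$ is uniformly bounded --- each hyperideal horoprism decomposes into five ideal tetrahedra (\remref{remark:volume_by_ideal_tetrahedra}), each of volume at most that of the regular ideal tetrahedron --- so the term $-2\vol(P_h)$ stays bounded and cannot drive $\HE_{\surf_g,\Theta}\to-\infty$. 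In the paper the divergence comes from the linear term $\sum_i(\Theta_i-\theta_i)h_i$: after normalizing so that all heights are $\leq0$, one shows that the cone angles $\theta_i$ at the vertices whose heights tend to $-\infty$ tend to $0$ (\lemref{lemma:euclidean_angle_limit}), so the coefficients $\Theta_i-\theta_i$ tend to $\Theta_i>0$ while $h_i\to-\infty$. This is precisely where the hypothesis $\Theta\in\RR_{>0}^{\verts}$ enters, and it is the step your outline leaves unsupported.

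Relatedly, your concern about \enquote{unboundedly many combinatorial changes} along a degenerating sequence is resolved not by uniform control over infinitely many tessellations but by the finiteness of the canonical tessellations of $\surf_g$ (\propref{prop:properties_of_weightings}, the Akiyoshi-type compactification): one passes to a subsequence on which the tessellation is constant, and the shift-invariance lets one renormalize the heights freely. The remaining technical input, absent from your sketch, is \lemref{lemma:boundary_of_config_space}: no edge of the limiting tessellation joins two degenerating vertices. This guarantees that every triangle at a degenerating vertex has two edge lengths blowing up and one staying bounded, whence the angle at that vertex vanishes and the linear term does its job.
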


Note that because of the variational principle this theorem provides an effective
method to compute the corresponding decorated discrete conformal maps.
An important special case of this theorem is the analogue of the
classical Poincar\'e--Koebe uniformization theorem for decorated discrete metrics.

\begin{theorem}[discrete uniformization of decorated PE-surfaces]
   \label{thm:discrete_uniformization}
   Given a hyperideally decorated PE-metric $(\dist_{\eucsurf_g}, r)$ on the marked
   genus $g$ surface $(\eucsurf_g, \verts)$. Then there is a unique discrete
   conformally equivalent decorated metric realizing the uniform angle distribution
   \begin{equation}\label{eq:euclidean_constant_angle_vector}
      \Theta_i\;\equiv\;\frac{2\pi\,(2g-2+|\verts|)}{|\verts|}.
   \end{equation}
\end{theorem}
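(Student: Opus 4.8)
The plan is to obtain this statement directly from \thmref{theorem:realisation_euclidean}, of which it is the special case obtained by taking $\Theta$ to be the uniform angle vector \eqref{eq:euclidean_constant_angle_vector}. It therefore suffices to verify that this particular $\Theta$ meets the two hypotheses of \thmref{theorem:realisation_euclidean} — that it satisfies the Gau{\ss}--Bonnet condition \eqref{eq:euclidean_gauss_bonnet_condition} and that it lies in $\RR_{>0}^{\verts}$ — and then to invoke parts~1 and~2 of that theorem for existence and for uniqueness up to scale, respectively.

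For the Gau{\ss}--Bonnet condition, since every vertex carries the same value, summing over the $|\verts|$ vertices and dividing by $2\pi$ gives
\[
   \frac{1}{2\pi}\sum_i\Theta_i
   \;=\;
   \frac{1}{2\pi}\,|\verts|\,\frac{2\pi\,(2g-2+|\verts|)}{|\verts|}
   \;=\;
   2g-2+|\verts|,
\]
so the uniform vector \eqref{eq:euclidean_constant_angle_vector} is chosen precisely so as to satisfy \eqref{eq:euclidean_gauss_bonnet_condition}; indeed it is the unique constant angle vector with this property.

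For positivity I would note that $\Theta_i>0$ is equivalent to $2g-2+|\verts|>0$, which is the standing non-degeneracy hypothesis of the theory: it is exactly the condition under which $\RR_{>0}^{\verts}$ contains \emph{any} vector satisfying \eqref{eq:euclidean_gauss_bonnet_condition} at all, since such a vector has coordinate sum $2\pi\,(2g-2+|\verts|)$. Concretely it holds for every $g\geq1$ with $\verts\neq\emptyset$, and in the spherical case $g=0$ it amounts to $|\verts|\geq3$. Under this assumption $\Theta\in\RR_{>0}^{\verts}$, and both hypotheses of \thmref{theorem:realisation_euclidean} are met, so existence and uniqueness follow at once.

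I do not expect a genuine obstacle here, because all the substantive work — the concavity of the discrete Hilbert--Einstein functional $\HE_{\surf_g,\Theta}$, the analysis of its behaviour toward the boundary of the admissible region, and the properness argument producing a maximizer — is already carried out in the proof of \thmref{theorem:realisation_euclidean}. The present theorem is merely its specialization to the single distinguished angle distribution \eqref{eq:euclidean_constant_angle_vector}, the only mildly delicate point being the remark that positivity of the uniform angle reduces to the non-degeneracy condition $2g-2+|\verts|>0$.
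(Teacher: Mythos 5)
Your proposal is correct and follows exactly the route the paper takes: Theorem~\ref{thm:discrete_uniformization} is derived as the special case of Theorem~\ref{theorem:realisation_euclidean} for the uniform angle vector, which is chosen precisely to satisfy the Gau{\ss}--Bonnet condition \eqref{eq:euclidean_gauss_bonnet_condition}. Your additional remark that positivity of the uniform angle requires $2g-2+|\verts|>0$ is a sensible observation the paper leaves implicit.
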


The discrete Hilbert--Einstein functional $\HE_{\surf_g,\Theta}$ is a concave,
twice continuously differentiable function over $\RR^{\verts}$
(\propref{prop:euclidean_he_functional_properties}), which can be
explicitly expressed using \emph{Milnor's Lobachevsky function}
(\secref{sec:volume_hyperbolic_tetrahedra}).
Its gradient flow is related to the \emph{combinatorial Yamabe/Ricci-flow}
(see \secref{sec:combinatorial_curvature_flow}).
Up to a linear function, the discrete Hilbert--Einstein functional is the
Legendre-transformation of the volume of \emph{convex (hyperideal) polyhedral cusps}.
These are certain collections of hyperideal tetrahedra, \ie, hyperbolic tetrahedra
with all vertices lying \enquote{outside} of hyperbolic $3$-space
(\secref{sec:hyperideal_polyhedral_cusps}). Indeed,
\thmref{theorem:realisation_euclidean} is connected to generalizations
of \textsc{Andreev}'s theorem: a discrete subgroup $G$ of isometries of hyperbolic
$3$-space is \emph{parabolic} if it acts freely cocompactly on a horosphere and
a hyperideal polyhedron $P$ is \emph{invariant} under $G$ if $G(P) = P$. Furthermore,
the polyhedron \emph{realizes} a hyperbolic surface $\surf_g$ if $\partial P/G$
is isometric to $\surf_g$. Hence, our approach provides a new variational proof of
the following theorem, which is equivalent to \thmref{thm:discrete_uniformization}
in the case of genus $1$ surfaces.

\begin{theorem}
   Let $(\eucsurf_1, \verts)$ be a marked genus $1$ surface. Each
   complete hyperbolic metric on $(\eucsurf_1, \verts)$ with cusps and
   complete ends of infinite area at $\verts$ can be realized as a unique
   convex hyperideal polyhedron, up to hyperbolic congruence, invariant under the
   action of a parabolic group.
\end{theorem}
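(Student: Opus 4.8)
The plan is to translate the statement into the language of decorated PE-metrics through the lift construction of \secref{sec:lift_to_hyperbolic_space}, and then to invoke \thmref{thm:discrete_uniformization}. For a genus $1$ surface the Gau{\ss}--Bonnet condition \eqref{eq:euclidean_gauss_bonnet_condition} forces the uniform distribution \eqref{eq:euclidean_constant_angle_vector} to be $\Theta_i\equiv2\pi$, so the realizing decorated PE-metric carries no cone singularities and is simply a \emph{flat decorated torus} $\RR^2/\Lambda$ with $\Lambda\cong\ZZ^2$. First I would make the dictionary precise: realizing the universal cover $\RR^2$ as a horosphere in $\HH^3$ and each decorated vertex as a hyperideal (resp.\ ideal) point, the generalized Epstein--Penner convex hull becomes a convex hyperideal polyhedron $P$; the lattice $\Lambda$ extends to a parabolic group $G$ acting freely cocompactly on the horosphere, $P$ is $G$-invariant, and $\partial P/G$ is isometric to the induced hyperbolic surface $\surf_1$, which is exactly the fundamental discrete conformal invariant. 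Vertices with $r_i=0$ produce cusps and those with $r_i>0$ produce complete ends of infinite area, while the hyperideal (non-intersecting circles) hypothesis guarantees the convexity of $P$.

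For existence, given a target metric $\surf_1$ I would use its canonical tessellation (\secref{sec:decorations_and_canonical_tessellations}) to produce a hyperideally decorated PE-metric on the torus whose fundamental discrete conformal invariant is $\surf_1$. Applying \thmref{thm:discrete_uniformization} then yields a discrete conformally equivalent flat decorated torus, and its lift is the sought convex hyperideal polyhedron, invariant under the parabolic group $G$ and realizing $\surf_1$.

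For uniqueness, two $G$-invariant convex hyperideal polyhedra realizing the same $\surf_1$ correspond under the lift to flat decorated tori sharing the same fundamental discrete conformal invariant. By the characterization of discrete conformal equivalence through this invariant (\secref{sec:convex_polyhedra_dce}) the two tori are discrete conformally equivalent, so the uniqueness clause of \thmref{thm:discrete_uniformization} forces them to coincide up to a global scaling. Under the dictionary a uniform scaling of the PE-metric corresponds to a hyperbolic isometry of $\HH^3$ fixing the cusp point, whence $P$ and its competitor are hyperbolically congruent.

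The hard part will be setting up this dictionary rigorously in the equivariant, genus $1$ setting: one must verify that the $G$-invariant convex hyperideal polyhedra are \emph{exactly} the lifts of flat decorated tori, and that the equivalence \enquote{up to scale} on the PE-side matches \enquote{up to hyperbolic congruence} on the polyhedral side. The other delicate point is the surjectivity used in the existence step---that every admissible $\surf_1$ arises as the fundamental invariant of some decorated PE-metric---which rests on the identification of weighted Delaunay tessellations with canonical tessellations of hyperbolic surfaces established in \secref{sec:decorations_and_canonical_tessellations}.
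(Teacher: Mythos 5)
Your proposal is correct and follows essentially the same route as the paper, which establishes this theorem precisely as the genus~$1$ instance of \thmref{thm:discrete_uniformization} (where the uniform angle distribution is $\Theta_i\equiv2\pi$) via the dictionary of \secref{sec:hyperideal_polyhedral_cusps}--\secref{sec:convex_polyhedra_dce}: convex polyhedral cusps over $\surf_1$ correspond to flat decorated tori, convexity to the weighted Delaunay condition, and the height shift $h\mapsto h+c\bm{1}_{\verts}$ to global scaling, i.e.\ to a hyperbolic isometry fixing the parabolic fixed point. The \enquote{delicate points} you flag (surjectivity onto admissible $\surf_1$, and the identification of $G$-invariant polyhedra with lifts of polyhedral cusps) are exactly what \propref{prop:correspondence_spaces} and its corollary supply.
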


This theorem first appeared in \textsc{F.~Fillastre}'s work \cite{Fillastre2008},
where he uses the \emph{method of continuity}, by
\textsc{A.~Alexandrov} \cite{Alexandrov2005}. Note that this method is
not constructive.
The viewpoints taken by \textsc{E.~Andreev} and \textsc{F.~Fillastre}
are in some sense \enquote{dual} to each other: the former deals with hyperbolic
polyhedra with \emph{prescribed dihedral angles} while the latter
\emph{prescribes the metric} of the hyperbolic polyhedra (see \cite{Fillastre2008}
and references therein). Thus, the dual theory to our hyperideally decorated
PE-surfaces is given by \emph{hyperideal circle patterns}. They were characterized
independently by \textsc{J.-M.~Schlenker} \cite{Schlenker2008} using the
method of continuity and \textsc{B.~Springborn} using a variational approach
\cite{Springborn2008} (see also \cite{BDS2017}).
Although, the functionals in \cite{Springborn2008} and \cite{BDS2017} are closely
related to our functional, the problems for circle patterns require fixed
combinatorics. In contrast, discrete conformal equivalence problems are
naturally considered with variable combinatorics.

Some important questions are not addressed in this article. The first concerns
convergence. Recently, \textsc{F.~Luo} \etal\ proved the
analogue of \textsc{Rodin}--\textsc{Sullivan}'s convergence theorem for
undecorated PE-surfaces \cite{LSW2022}. Furthermore, several results for the
the convergence of the (discrete) logarithmic scale factors to their smooth
counterparts are known \cite{GLW2019, Zhu2022, LWZ2023}. We believe that these
results can be extended to decorated PE-surfaces.

The second question is on discrete conformal equivalence in different
\enquote{background geometries}. In \cite{BPS2015} the analogue of
\teqref{eq:classical_dce} for \emph{piecewise hyperbolic surfaces} was introduced.
The corresponding existence and uniqueness results were obtained in \cite{GGL+2018}
and an interpretation in terms of hyperbolic polyhedra was discussed in
\cite{Prosanov2020a}. It is possible to define decorations for piecewise
hyperbolic surfaces in the same way as for PE-surfaces (see also
\cite{Guo2011a,GT2017}). We plan to address the analogue of
\thmref{theorem:realisation_euclidean} in this setup in an upcoming article.

Finally, there is a natural way to extend decorations beyond the case of
strictly non-negative radii: decorations allowing imaginary radii (see
\remref{rem:imaginary_radii}). The corresponding hyperbolic tetrahedra posses
also vertices \enquote{inside} of hyperbolic $3$-space (see \cite{ZGZ+2014}).
A variational approach via the discrete Hilbert--Einstein functional is
still feasible (see, \eg, \cite{FI2009}). So some version of
\thmref{theorem:realisation_euclidean} should exist for this generalization.
Clearly, the Gau{\ss}--Bonnet condition \eqref{eq:euclidean_gauss_bonnet_condition}
is still necessary but we believe that it is not sufficient. Thus, a special
emphasis should be put on finding a characterization of all realizable
discrete curvatures.

\section{Decorated discrete conformal equivalence and maps}
\subsection{Piecewise euclidean surfaces and notation}
\label{sec:pe_surfaces_and_notation}
In this article, a \emph{marked surface} $(\eucsurf_g, \verts)$ is a closed
orientable $2$-dimensional manifold of genus $g$ with a finite set
$\verts\subset\eucsurf_g$, the \emph{marking}. A complete path metric
$\dist_{\eucsurf_g}$ on the marked surface is \emph{piecewise euclidean}
(\emph{PE}) if there is a cell-complex $\tri$ homeomorphic to $\eucsurf_g$
with $0$-cells given by $\verts$ such that each open $2$-cell with the restriction
of $\dist_{\eucsurf_g}$ is isometric to a euclidean polygon (see \eg\ \cite{CHK2000}).
We call a marked surface endowed with a PE-metric a \emph{PE-surface}.
It is flat except maybe for conical singularities at points in $\verts$.

The $1$-cells of the cell-complex above are geodesics in the PE-surface. Thus,
the cell-complex is called a \emph{(geodesic) tessellation}.
In general there are infinitely many tessellations of a given PE-surface.
The tessellation is a \emph{triangulation} if all $2$-cells are isometric to
euclidean triangles. We refer to the $0$-cells as \emph{vertices}, the $1$-cells
as \emph{edges}, and the $2$-cells as \emph{faces}. The set of edges and faces
is denoted by $\edges_{\tri}$ and $\faces_{\tri}$, respectively. If the
tessellation is clear from the context, we often simplify this to $\edges$ and
$\faces$.

\begin{remark}\label{remark:geometric_double}
   The definitions and results in this article can be naturally extended to
   compact PE-surfaces with boundary. Doubling the surface and gluing it
   isometrically along the boundary, we obtain a closed PE-surfaces with
   reflection-symmetry corresponding to the boundary. This symmetry grants
   that we can work with the doubled surface and restrict to one half afterwards.
\end{remark}

There is no need that a triangulation $\tri$, in the sense of this article, is
a simplicial complex, \ie, self-glueings of a single triangle or glueing two
triangles along multiple edges is allowed. Indeed, it is essential for our
constructions in \secref{sec:weighted_delaunay_tessellations} and
\secref{sec:decorations_and_canonical_tessellations} that we work with the general
class of triangulations. Still, we adopt the following notation: we denote by $ij$
the edge with vertices $i$ and $j$, by $ijk$ the face incident to the vertices $i$,
$j$, and $k$, and by $\corner{i}{jk}$ the corner at vertex $i$ in the triangle $ijk$.
This notation is simple but only unambiguous if $\tri$ is a simplicial complex.
Nonetheless, we think that the risk of confusion is small compared to the gain of
conciseness of our expositions.

\begin{remark}
   Most of our considerations will start with a single triangle or
   a quadrilateral given by two adjacent triangles of the PE-surface. These can
   always be lifted the plane where our notation is unambiguous. The
   quantities we obtain in this way can then be projected back to the surface.
   Thus, the problem with the adopted notation amounts \enquote{merely} to an exercise
   in bookkeeping.
\end{remark}

A PE-metric on the marked surface $(\eucsurf_g, \verts)$ is determined by a
triangulation $\tri$ and a function $\len\colon\edges_{\tri}\to\RR_{>0}$,
the \emph{edge-lengths}, such that the triangle inequalities are satisfied on
each triangle of $\tri$. In particular, we can compute the angle
$\theta_{jk}^i$ at each corner abutting the vertex $i\in\verts$, \eg, using
the law of cosines. Then the \emph{cone-angle} $\theta_i$ at $i$ is given by sum of
these angles $\theta_{jk}^i$ (see \figref{fig:notation_euclidean_triangle}).

\begin{figure}
   \labellist
   \small\hair 2pt
   \pinlabel $\len_{ij}$ at 305 150
   \pinlabel $\len_{ki}$ at 195 320
   \pinlabel $\len_{jk}$ at 405 320
   \pinlabel $\theta_{jk}^i$ at 205 210
   \pinlabel $\theta_{ki}^j$ at 395 215
   \pinlabel $\theta_{ij}^k$ at 305 380
   \pinlabel $r_i$ at 100 180
   \pinlabel $r_j$ at 500 180
   \pinlabel $r_k$ at 280 490
   \endlabellist
   \centering
   \includegraphics[width=0.5\textwidth]{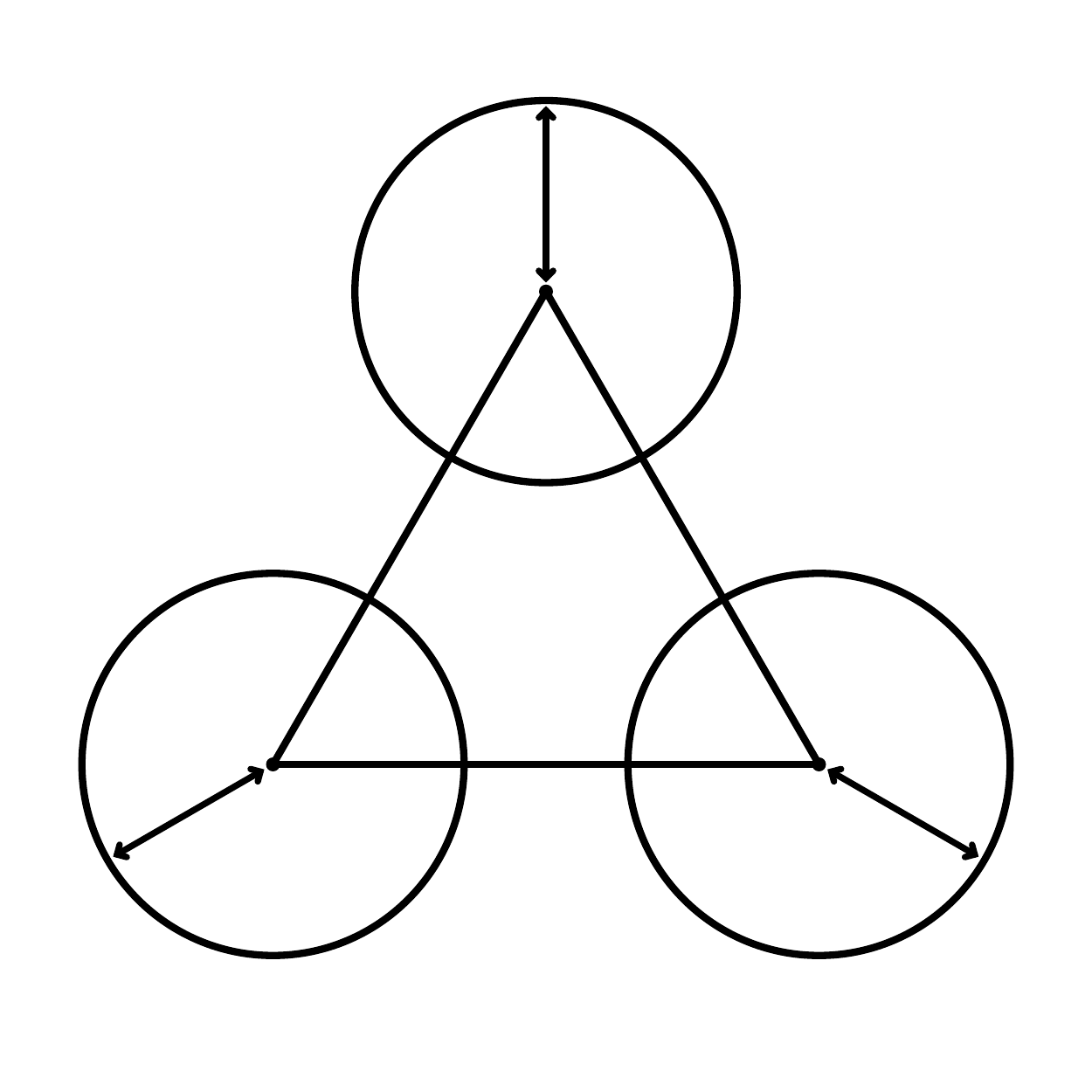}
   \caption{%
      Notation in a decorated euclidean triangle.
   }
   \label{fig:notation_euclidean_triangle}
\end{figure}

\subsection{Decorations and discrete conformal equivalence}
\label{sec:decorations_and_dce}
A decoration of a euclidean polygon is a choice of circle about each of
its vertices (see \figref{fig:notation_euclidean_triangle}). This extends
to PE-surfaces, \ie, a \emph{decoration} of the PE-surface
$(\eucsurf_g, \verts, \dist_{\eucsurf_g})$ with geodesic triangulation $\tri$
is a choice of decoration of each triangle such that it is consistent along
edges of pairs of neighbouring faces. We refer to the circles of the decoration
as \emph{vertex-circles}. A decoration is determined by the
radii $r\colon\verts\to\RR_{\geq0}$,
showing that it only depends on the PE-metric
$\dist_{\eucsurf_g}$ and not on the triangulation $\tri$. We call a decoration
\emph{hyperideal} if no pair of vertex-circles intersects. This is equivalent to
\begin{equation}\label{eq:hyperideal_decoration_condition}
   r_i^2 + r_j^2 \;<\; \dist_{\eucsurf_g}(i, j)
\end{equation}
for all $(i,j)\in\verts^2$. Here, $\dist_{\eucsurf_g}(i, i)$ is the length of
a smallest geodesic loop starting and ending in $i$. For this article we will
only consider hyperideal decorations. So from now on we might drop the
\enquote{hyperideal}. We call the pair $(\dist_{\eucsurf_g}, r)$ a
\emph{decorated PE-metric} and also denote it by $(\len, r)$ if a triangulation is
prescribed.

Let us first consider a single decorated triangle. We can think of it to
be embedded in $\CC$. This embedding is uniquely determined by $(\len, r)$
up to euclidean transformations. The conformal automorphisms
of the extended complex plane $\CC\cup\{\infty\}$
are given by \emph{(orientation preserving) M\"obius transformations}, \ie,
complex linear fractional transformations
\begin{equation}\label{eq:moebius_transformation}
   M\colon z \;\mapsto\; \frac{\alpha z + \beta}{\gamma z + \delta}
\end{equation}
with $\alpha\delta-\beta\gamma\neq0$. M\"obius transformations act bijectively on the set of
points $\CC\cup\{\infty\}$ and on the set of (euclidean) circles and lines,
respectively. More precisely, they preserve the quadratic equations of the form
\begin{equation}\label{eq:conformal_circles}
   az\bar{z} \,-\, \bar{b}z \,-\, b\bar{z} \,+\, c \;=\; 0,
\end{equation}
where $a,c\in\RR$ and $b\in\CC$. For example, the circle with center
$(p_x, p_y)\in\RR^2$ and radius $r>0$ corresponds to the parameters
\begin{equation}\label{eq:conformal_circles_parameters}
   a=1, \qquad b=p_x+\ii p_y \quad\text{ and }\quad c=p_x^2+p_y^2-r^2.
\end{equation}
Identifying a M\"obius transformation $M$
with the matrix
\(
   \left(\begin{smallmatrix}
      \alpha & \beta\\
      \gamma & \delta
   \end{smallmatrix}\right)
   \in
   \SL(2;\CC)
\)
and interpreting \eqref{eq:conformal_circles} in terms of Hermitian matrices,
the action of $M$ on a circle is given by
\begin{equation}\label{eq:moebius_transformation_formula}
   \bar{M}^{\upT}\,
   \begin{pmatrix}
      a & b\\
      \bar{b} & c
   \end{pmatrix}
   \,M.
\end{equation}
Thus, $b\bar{b}-ac$ is preserved under this action because it is minus the
determinant of the Hermitian matrix. This expression is $>0$ for circles and
$=0$ for points. Furthermore, it is a quadratic form with signature $(3,1)$. The
corresponding bilinear form is given by
\begin{equation}
   \label{eq:bilinear_form}
   \ip{X}{Y}_{3,1}
   \,\coloneq\,
   -\frac{1}{2}
   \tr\left(
   X\left(\begin{smallmatrix}0&-\ii\\ \ii&0\end{smallmatrix}\right)
   Y^{\upT}\left(\begin{smallmatrix}0&- \ii\\ \ii&0\end{smallmatrix}\right)\right).
\end{equation}
Parametrizing $X\in\Herm(2)$ by
\begin{equation}\label{eq:alternative_parametrization}
   X
   = \begin{pmatrix}
      x_4 + x_3 & x_1 + \ii x_2\\
      x_1 - \ii x_2 & x_4 - x_3
   \end{pmatrix}
\end{equation}
we see that $\ip{X}{Y}_{3,1} = x_1y_1 + x_2y_2 + x_3y_3 - x_4y_4$. Furthermore,
\eqref{eq:moebius_transformation_formula} shows that the identity component
$\SO^{+}(3,1)$ of its isometry group is isomorphic to $\SL(2;\CC)$.
Thus, we obtain an identification up to scaling of circles and points with
elements in $\{\|X\|_{3,1}^2 > 0\}$ and $\{\|X\|_{3,1}^2=0\}$, respectively. Here
$\|X\|_{3,1}^2\coloneq\ip{X}{X}_{3,1}$. For more information about this
identification we refer the reader to \cite[Chpt.~3]{Beardon1983} and
\cite{Hertrich-Jeromin2003}.

We say that a M\"obius transformation maps a decorated triangle
to another decorated triangle if it maps the vertex-circles of the first triangle
to the corresponding vertex-circles of the second triangle, respectively.
The euclidean triangles themself can be recaptured by considering the convex
hulls of the centers of the vertex-circle.
Such a transformation does not always exist. Indeed,
a pair of circles cannot be mapped arbitrarily to two other circles by a M\"obius
transformation since their \emph{inversive distance}, \ie,
\begin{equation}\label{eq:def_inversive_distance}
   I_{ij} \;\coloneq\;
   \frac{\len_{ij}^2-r_i^2-r_j^2}{2r_ir_j},
\end{equation}
is a M\"obius-geometric invariant (see, \eg, \cite{Coxeter1966a}).

\begin{lemma}\label{lemma:moebius_and_inversive_distance}
   Consider two decorated triangles given by $(\len, r)$ and
   $(\tilde{\len}, \tilde{r})$, respectively. Suppose that $r_i>0$ and
   $\tilde{r}_i>0$ for all $i\in\{1, 2, 3\}$.
   Then there is a unique M\"obius transformation mapping the first decorated triangle
   to the second decorated triangle if and only if the inversive distances for
   corresponding edges coincide.
\end{lemma}
\begin{proof}
   From parametrization \eqref{eq:conformal_circles_parameters} we obtain
   representatives $C_i,\tilde{C}_i\in\Herm(2)$ for the vertex-circles.
   Now, finding a M\"obius-transformation mapping the first decorated triangle to
   the second is equivalent to finding an element of $\SO^{+}(3,1)$ that maps
   $C_i$ to $\tilde{C}_i$ up to scale.
   That is, we are looking for $u_i\in\RR$ such that
   \begin{equation}
      \big\|\tilde{C}_i\big\|_{3,1}^2
      \;=\;
      \big\|\ee^{u_i}C_i\big\|_{3,1}^2
      \qquad\text{and}\qquad
      \ip{\tilde{C}_i}{\tilde{C}_j}_{3,1}
      \;=\;
      \ip{\ee^{u_i}C_i}{\ee^{u_j}C_j}_{3,1}.
   \end{equation}
   Using the alternative parametrization \eqref{eq:alternative_parametrization},
   a small computation shows that
   \begin{equation}
      \|C_i\|_{3,1}^2
      \;=\;
      r_i^2
      \qquad\text{and}\qquad
      \ip{C_i}{C_j}_{3,1}
      \;=\;
      -\frac{1}{2}\big(\len_{ij}^2-r_i^2-r_j^2).
   \end{equation}
   Hence, the first equality gives
   $\ee^{u_i}=\nicefrac{\|\tilde{C}_i\|_{3,1}}{\|C_i\|_{3,1}}$
   and the second equality shows that the compatibility conditions are
   \begin{equation}\label{eq:inversive_distances_via_inner_product}
      -I_{ij}
      \;=\;
      \frac{\ip{C_i}{C_j}_{3,1}}
      {\|C_i\|_{3,1}\|C_j\|_{3,1}}
      \;=\;
      \frac{\ip{\tilde{C}_i}{\tilde{C}_j}_{3,1}}
      {\|\tilde{C}_i\|_{3,1}\|\tilde{C}_j\|_{3,1}}
      \;=\;
      -\tilde{I}_{ij}.\qedhere
   \end{equation}
\end{proof}

The $u_i$ introduced in the previous proof are called
\emph{(discrete) logarithmic scale factors}. To extend these considerations to
decorated triangles with vanishing vertex-radii we consider $\epsilon$-families
$(\len^{\epsilon}, r^{\epsilon})$ of decorated triangles: the length and radii
depend smoothly on the real parameter $\epsilon$.
Now, a decorated triangle with an
\emph{infinitesimal circle at the vertex $i$}
is an $\epsilon$-family $(\len^{\epsilon}, r^{\epsilon})$ of decorated
triangles such that
\[
   r_i^{\epsilon} \;=\; \epsilon R_i \,+\, \mathrm{o}(\epsilon).
\]
Here, $R_i>0$ and $\mathrm{o}$ is a function satisfying
$\lim_{\epsilon\to0}\nicefrac{\mathrm{o}(\epsilon)}{\epsilon}=0$.

\begin{lemma}\label{lemma:local_moebius_to_factors}
   Given two decorated triangles $(\len, r)$ and $(\tilde{\len}, \tilde{r})$.
   There is a unique M\"obius transformation $M$ mapping the first decorated
   triangle to the second decorated triangle if and only if there are $u_i\in\RR$
   for $i\in\{1,2,3\}$ such that
   \begin{align}
      \tilde{r}_i
      &\;=\; \ee^{u_i}r_i,\\
      \tilde{\len}_{ij}^2
      &\;=\; (\ee^{2u_i}-\ee^{u_i+u_j})\,r_i^2
               \;+\; \ee^{u_i+u_j}\len_{ij}^2
               \;+\; (\ee^{2u_j}-\ee^{u_i+u_j})\,r_j^2
   \end{align}
   for all edges $ij$. In particular, the $u_i$ can be computed via
   \begin{equation}\label{eq:radii_to_log_factors}
      \ee^{u_i}
      \;=\;
      \begin{cases}
         \tilde{r}_i/r_i &,\text{if }r_i\neq0,\\
         |M'(p_i)| &,\text{if }r_i=0.
      \end{cases}
   \end{equation}
   Here, $M'$ is the derivative of $M$, and $p_i\in\CC$ represents
   the corresponding vertex of the first triangle.
\end{lemma}
\begin{proof}
   This lemma can essentially be proved in the same way as
   \lemref{lemma:moebius_and_inversive_distance}. The only difference is that
   in general we cannot obtain the logarithmic scale factors as quotients of
   $\|\cdot\|_{3,1}$-norms. Instead, we can use the conditions on the mixed
   inner products to derive the alternative formula
   \begin{equation}
      \label{eq:triangle_conformal_factors}
      \ee^{2u_i}
      \;=\;
      \frac{(\tilde{\len}_{ij}^2-\tilde{r}_i^2-\tilde{r}_j^2)}
         {(\len_{ij}^2-r_i^2-r_j^2)}
      \frac{(\len_{jk}^2-r_j^2-r_k^2)}
         {(\tilde{\len}_{jk}^2-\tilde{r}_j^2-\tilde{r}_k^2)}
      \frac{(\tilde{\len}_{ki}^2-\tilde{r}_k^2-\tilde{r}_i^2)}
         {(\len_{ki}^2-r_k^2-r_i^2)},
   \end{equation}
   which is also valid if some $r_i=0$.

   It is immediate that we can compute $u_i$ via $\nicefrac{\tilde{r}_i}{r_i}$
   if $r_i\neq0$. To see that $u_i=\log|M'(p_i)|$ if $r_i=0$, endow $(\len, r)$
   with an infinitesimal circle $(\len^{\epsilon}, r^{\epsilon})$ at the vertex $i$.
   Applying $M$ to this $\epsilon$-family, we obtain an infinitesimal circle
   $(\tilde{\len}^{\epsilon}, \tilde{r}^{\epsilon})$ at $i$ with
   $(\tilde{\len}^0, \tilde{r}^0) = (\tilde{\len}, \tilde{r})$. Then
   \[
      \lim_{\epsilon\to0}\, \frac{\tilde{r}_i^{\epsilon}}{r_i^{\epsilon}}
      \;=\;
      |M'(p_i)|.
      \qedhere
   \]
\end{proof}

\begin{definition}[Discrete conformal equivalence via M\"obius transformations]
   \label{def:dce_via_moebius}
   Two combinatorially equivalent triangulated decorated PE-surfaces
   $(\tri, \len, r)$ and $(\tri, \tilde{\len}, \tilde{r})$,
   are \emph{discretely conformally equivalent} if
   \begin{enumerate}[label=(\roman*)]
      \item
         for each face $ijk\in\faces$ there is a M\"obius transformation $M_{ijk}$,
         that maps the decorated triangle $(ijk, \len|_{ijk},r|_{ijk})$ to the
         decorated triangle $(ijk, \tilde{\len}|_{ijk}, \tilde{r}|_{ijk})$, and
      \item
         for each $i\in\verts$ with $r_i=0$ and adjacent faces $ijk, ijl\in\faces$
         the M\"obius transformations satisfy
         \[
            |M_{ijk}'(p_i)| \;=\; |M_{ijl}'(p_i)|.
         \]
   \end{enumerate}
\end{definition}

\begin{remark}
   Our previous considerations show, that the inversive distance $I_{ij}$ along
   the edges are invariant under discrete conformal equivalence. Thus,
   our notion of discrete conformal equivalence is equivalent to the
   \emph{discrete conformal maps} of \emph{inversive distance circle packings}
   proposed by \textsc{P.~Bowers} and \textsc{K.~Stephenson} \cite{BS2004}. Note
   that inversive distance packings require $r_i>0$ for all $i\in\verts$.
\end{remark}

The notions of logarithmic scale factors and infinitesimal circles naturally
extend to general decorated PE-surfaces. Hence, \lemref{lemma:local_moebius_to_factors}
leads to the following alternative definition of discrete conformal equivalence.

\begin{figure}[t]
   \centering
   \labellist
   \small\hair 2pt
   \pinlabel $i$ at 37 65
   \pinlabel $j$ at 108 65
   \pinlabel $r_i,r_j>0$ at 73 30
   \pinlabel {\(
      \tilde{\len}_{ij}^2
      = (\ee^{2u_i}-\ee^{u_i+u_j})\,r_i^2+\ee^{u_i+u_j}\len_{ij}^2
      \)}
      at 72 15
   \pinlabel $+(\ee^{2u_j}-\ee^{u_i+u_j})\,r_j^2$ at 77 2
   \pinlabel $i$ at 192 65
   \pinlabel $j$ at 263 65
   \pinlabel $r_i>0,\;r_j=0$ at 226 30
   \pinlabel {\(
      \tilde{\len}_{ij}^2
      = (\ee^{2u_i}-\ee^{u_i+u_j})\,r_i^2+\ee^{u_i+u_j}\len_{ij}^2
      \)}
      at 222 15
   \pinlabel $i$ at 305 65
   \pinlabel $j$ at 376 65
   \pinlabel $r_i,r_j=0$ at 342 30
   \pinlabel $\tilde{\len}_{ij}^2=\ee^{u_i+u_j}\len_{ij}^2$ at 343 15
   \endlabellist
   \includegraphics[width=\textwidth]{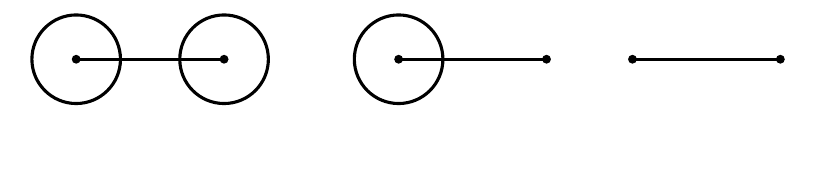}
   \caption{%
      Sketch of the three different cases of decorations along an edge
      together with the transformation formulas for the lengths under
      discrete conformal equivalence.
   }
   \label{fig:cases_of_dce}
\end{figure}

\begin{propdef}[discrete conformal equivalence via scale factors]
   \label{prop:conformal_change}
   Let $\tri$ be a triangulation of the marked surface $(\eucsurf_g, \verts)$.
   Two decorated PE-metrics $(\len, r)$ and $(\tilde{\len}, \tilde{r})$
   are discrete conformally equivalent if and only if there is a $u\in\RR^{\verts}$
   such that
   \begin{equation}\label{eq:conformal_change_l}
      \begin{aligned}
      \tilde{r}_i
      &\;=\; \ee^{u_i}r_i\\
      \tilde{\len}_{ij}^2
      &\;=\; (\ee^{2u_i}-\ee^{u_i+u_j})\,r_i^2
               \;+\; \ee^{u_i+u_j}\len_{ij}^2
               \;+\; (\ee^{2u_j}-\ee^{u_i+u_j})\,r_j^2
      \end{aligned}
   \end{equation}
   for all $ij\in\edges$ (see \figref{fig:cases_of_dce}).
\end{propdef}

\begin{remark}\label{rem:imaginary_radii}
   The definitions and derivations in this section can be directly extended
   beyond the case of hyperideal decorations. Actually we can even
   allow \enquote{circles with imaginary radii}, that is, we consider
   \emph{generalized radii} given via $r^2\colon\verts\to\RR$. Using
   the identification of circles with Hermitian matrices and
   \teqref{eq:conformal_circles_parameters} this gives us a decoration of
   the PE-surface. \teqref{eq:moebius_transformation_formula} shows that
   our definition of discrete conformal equivalence via M\"obius transformations
   (\defref{def:dce_via_moebius}) is still applicable. Furthermore, there is
   a notion of inversive distance for these general circles given via
   \eqref{eq:inversive_distances_via_inner_product}.
   So also \propref{prop:conformal_change} is still true.

   It follows that our definition of discrete conformal equivalence of decorated
   PE-surfaces is equivalent to the \emph{unified discrete Ricci flow} of
   \textsc{M.~Zhang} \etal
   \cite[Eq.~3.3]{ZGZ+2014} and the \emph{discrete conformal structures} via
   \emph{duality structures} of \textsc{D.~Glickenstein}
   \cite{Glickenstein2011}. Indeed, their different choice of variables is related
   to our $(\len, r)$ via
   \begin{center}
      \begin{tabular}{c|cc}
         & \textsc{Zhang} \etal
         & \textsc{Glickenstein}
         \\
         \hline\hline
         $\epsilon_i$
         & $\operatorname{sgn}(r_i)$
         & ---
         \\[0.25cm]
         $\alpha_i$
         & ---
         & $r_i^2$
         \\[0.25cm]
         $2\eta_{ij}$
         & \makecell{
            $|I_{ij}|$ \\
            (\emph{\footnotesize if $r_ir_j\neq0$, otherwise see
            \teqref{eq:computing_hyperbolic_edge_lengths}})
         }
         & $\len_{ij}^2-r_i^2-r_j^2$
      \end{tabular}
   \end{center}
\end{remark}

\subsection{Decorated discrete conformal maps}
\label{sec:discrete_conformal_maps}
For a decoration of a euclidean triangle $ijk$ there is a unique circle $C_{ijk}$
which is simultaneously orthogonal to all vertex-circles of the triangle. We call it
the \emph{face-circle} of the decorated triangle $ijk$ and denote its radius by
$r_{ijk}$. A \emph{face-circle preserving projective map} between two decorated
euclidean triangles is a projective map that maps one triangle onto the other and the
face-circle of one to the face-circle of the other.

\begin{definition}[decorated discrete conformal map]
   \label{def:decorated_discrete_conformal_maps}
   Let $\tri$ be a triangulation of the marked surface $(\eucsurf_g, \verts)$.
   A \emph{decorated discrete conformal map} from a triangulated decorated
   PE-surface $(\tri, \len, r)$ to another combinatorially equivalent decorated
   PE-surfaces $(\tri, \tilde{\len}, \tilde{r})$ is a marking preserving
   self-homeomorphism of $(\eucsurf_g, \verts)$ that restricts to a face-circle
   preserving projective map on each triangle.
\end{definition}

\begin{remark}
   In the special case of \enquote{undecorated} PE-surfaces this definition
   recovers the discrete conformal maps defined via circumcircle preserving
   projective maps proposed in \cite{BPS2015}.
\end{remark}

\begin{figure}[t]
   \centering
   \labellist
   \small\hair 2pt
   \pinlabel $\mathcal{F}$ at 70 455
   \pinlabel $\{z=r_{ijk}\}$ at 865 200
   \endlabellist
   \includegraphics[width=0.7\textwidth]{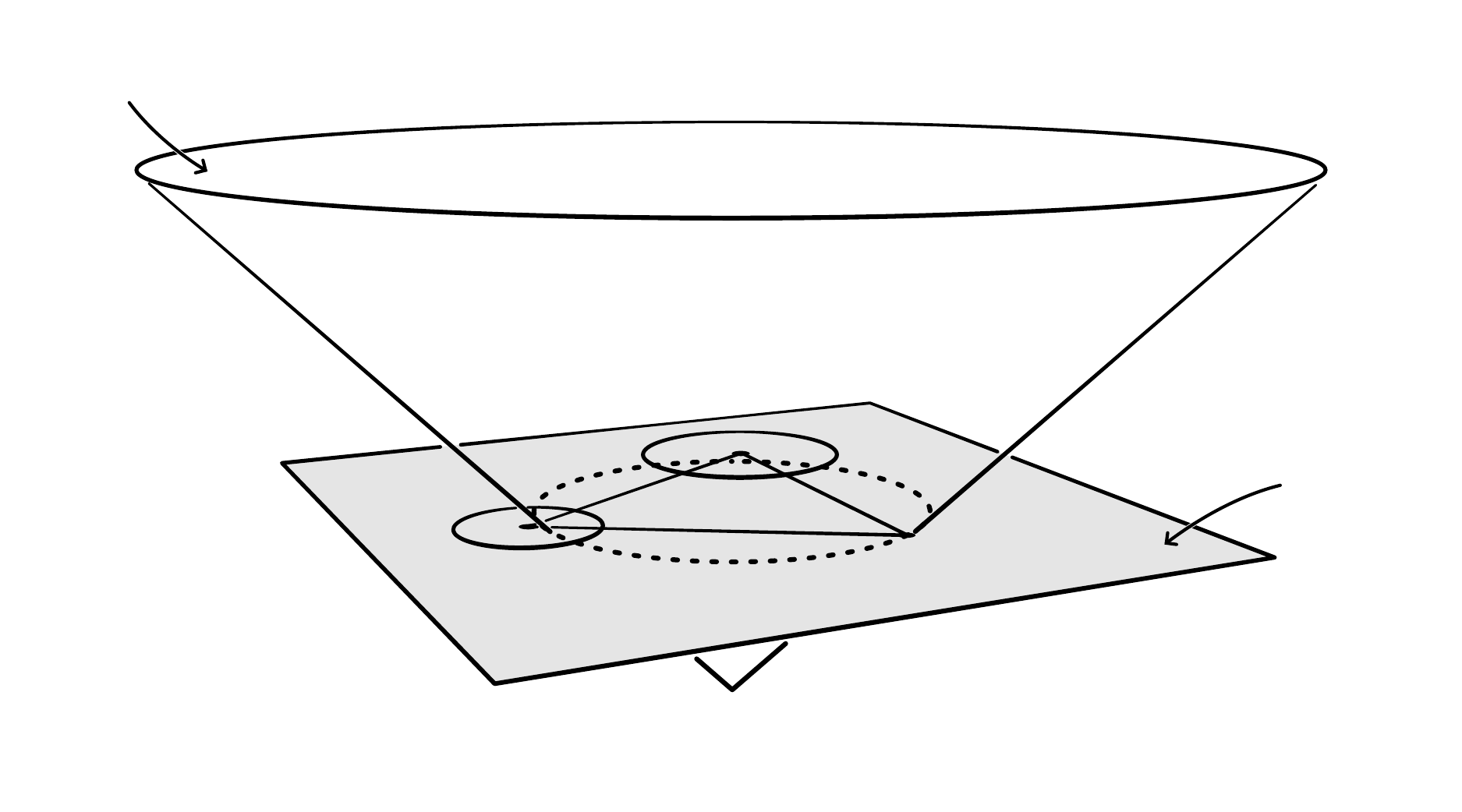}
   \caption{%
      Sketch of the lift of a decorated euclidean triangle to Minkowski
      $3$-space. Note that face-circle (dotted) is the intersection of
      the affine plane $\{z=r_{ijk}\}$ with the future cone $\mathcal{F}$.
      Furthermore, the centers of the vertex-circles lie outside of $\mathcal{F}$.
   }
   \label{fig:notation_face_circle_and_lift}
\end{figure}

To get a better grasp on face-circle preserving projective maps we make an auxiliary
construction: denote by $\RR^{2,1}$ Minkowski $3$-space, \ie, $\RR^3$ endowed with the
inner-product $\ip{p}{q}_{2,1} = p_xq_x+p_yq_y-p_zq_z$. In particular,
$\|p\|_{2,1}^2\coloneqq\ip{p}{p}_{2,1}$.
Given a decorated euclidean triangle $ijk$. We can isometrically embed it into
the affine plane $\{z=r_{ijk}\}\subset\RR^{2,1}$ such that the
face-circle $C_{ijk}$ is the intersection of this plane with the \emph{future cone}
\[
   \mathcal{F}\;\coloneq\;\big\{p\in\RR^{2,1}\,:\,\|p\|_{2,1}^2<0,\, p_z>0\big\}
\]
(see \figref{fig:notation_face_circle_and_lift}).
Denote by $p_i=(x_i,y_i,r_{ijk})\in\RR^{2,1}$ the lifts
of the vertices. From the euclidean laws of cosine follows that
$x_i^2+y_i^2=r_i^2+r_{ijk}^2$ and
$2(x_ix_j+y_iy_j)=r_i^2+r_j^2+2r_{ijk}^2-\len_{ij}^2$. Hence,
\begin{equation}
   \|p_i\|_{2,1}^2 \;=\; r_i^2
   \qquad\text{and}\qquad
   \|p_i-p_j\|_{2,1}^2 \;=\; \len_{ij}^2.
\end{equation}
This shows that we can identify decorations of the euclidean triangle $ijk$ with
triples $p_i,p_j,p_k\in\RR^{2,1}$. The identification is unique up to
conjugation with elements of $\SO^{+}(2,1)$, \ie, future cone preserving isometries
of the Minkowski inner product. It follows, that face-circle preserving projective
maps are in one-to-one correspondence (up to conjugation) to scaling the lifts,
\ie, considering $p\mapsto\ee^{u}p$ for scale factors
$(u_i, u_j, u_k)\in\RR^3$. Using the same argument as before, we can compute
\begin{equation}
   \big\|\ee^{u_i}p_i\big\|_{2,1}^2 \;=\; \ee^{2u_i}r_i^2
\end{equation}
and
\begin{equation}
   \big\|\ee^{u_i}p_i-\ee^{u_j}p_j\big\|_{2,1}^2
   \;=\; (\ee^{2u_i}-\ee^{u_i+u_j})\,r_i^2
                  \;+\; \ee^{u_i+u_j}\len_{ij}^2
                  \;+\; (\ee^{2u_j}-\ee^{u_i+u_j})\,r_j^2.
\end{equation}
These are exactly the equations derived in \propref{prop:conformal_change}. So
we obtain

\begin{proposition}[decorated discrete conformal maps]
   Let $\tri$ be a triangulation of the marked surface $(\eucsurf_g, \verts)$.
   Given two decorated PE-metrics $(\len, r)$ and $(\tilde{\len}, \tilde{r})$
   \wrt\ $\tri$. The following statements are equivalent:
   \begin{enumerate}[label=(\roman*)]
      \item
         $(\len, r)$ and $(\tilde{\len}, \tilde{r})$ are discrete
         conformally equivalent.
      \item
         There is a decorated discrete conformal map from $(\tri, \len, r)$
         to $(\tri, \tilde{\len}, \tilde{r})$.
   \end{enumerate}
\end{proposition}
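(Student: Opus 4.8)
The plan is to reduce both implications to \propref{prop:conformal_change} by means of the Minkowski lift established just above. The bridge between the two notions is the correspondence already derived there between face-circle preserving projective maps on a single decorated triangle and scalings $p\mapsto\ee^{u}p$ of its lift into $\RR^{2,1}$: the two norm computations preceding the statement show that scaling the triple $p_i,p_j,p_k$ by $(\ee^{u_i},\ee^{u_j},\ee^{u_k})$ produces exactly the decorated triangle whose radii and edge-lengths are prescribed by the conformal change equations \eqref{eqs:conformal_change}. So once the global scale factors are seen to be consistent, both (i) and (ii) are characterized by the very same per-edge identities.

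For the implication (ii)$\Rightarrow$(i), I would start from a decorated discrete conformal map and restrict it to each face $ijk$. By \defref{def:decorated_discrete_conformal_maps} this restriction is a face-circle preserving projective map, hence corresponds, via the lift, to a scaling by some triple $(u_i^{ijk},u_j^{ijk},u_k^{ijk})$. The radius relation \eqref{eq:conformal_change_r} forces $\ee^{u_i^{ijk}}=\tilde{r}_i/r_i$, which is the purely vertex-wise quantity \eqref{eq:radii_to_log_factors} and is therefore independent of the face containing $i$. Thus the local scale factors assemble into a single $u\in\RR^{\verts}$, and \eqref{eq:conformal_change_l} then holds on every edge. \propref{prop:conformal_change} yields discrete conformal equivalence.

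For (i)$\Rightarrow$(ii), I would take the $u\in\RR^{\verts}$ supplied by \propref{prop:conformal_change} and define on each face $ijk$ the face-circle preserving projective map $\phi_{ijk}$ obtained from scaling the lift by $(\ee^{u_i},\ee^{u_j},\ee^{u_k})$; by construction $\phi_{ijk}$ carries $(ijk,\len|_{ijk},r|_{ijk})$ to $(ijk,\tilde{\len}|_{ijk},\tilde{r}|_{ijk})$. The remaining task is to glue these into a single self-homeomorphism. The key observation is that the restriction $\phi_{ijk}|_{ij}$ depends only on the decorated edge data $(r_i,r_j,\len_{ij})$ and on $(u_i,u_j)$, not on the opposite vertex $k$: in the lift the edge $ij$ spans the $2$-plane $\lin\{p_i,p_j\}\subset\RR^{2,1}$, which by the $\SO^{+}(2,1)$-rigidity of the configuration is determined up to isometry by $\|p_i\|_{2,1}^2=r_i^2$, $\|p_j\|_{2,1}^2=r_j^2$ and $\|p_i-p_j\|_{2,1}^2=\len_{ij}^2$, and the scaling restricts to the diagonal map with eigenvalues $\ee^{u_i},\ee^{u_j}$ on this plane. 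Consequently $\phi_{ijk}|_{ij}=\phi_{ij\ell}|_{ij}$ for the two faces sharing $ij$, so the $\phi_{ijk}$ glue to a marking preserving self-homeomorphism of $(\eucsurf_g,\verts)$ that restricts to a face-circle preserving projective map on each triangle, i.e. a decorated discrete conformal map.

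The main obstacle is precisely this edge-consistency in the second implication: one must verify that the two face-circle preserving projective maps induced on a common edge by its two adjacent triangles genuinely agree, which is what the $\SO^{+}(2,1)$-rigidity of the decorated edge configuration provides. Everything else is bookkeeping layered on top of the lift construction and \propref{prop:conformal_change}.
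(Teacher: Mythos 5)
Your proposal is correct and follows essentially the same route as the paper, which establishes the proposition by identifying face-circle preserving projective maps with scalings $p\mapsto\ee^{u}p$ of the Minkowski lifts and reading off the equations of \propref{prop:conformal_change} from the norm computations. The only difference is that you make explicit the edge-consistency of the glued maps and the face-independence of the scale factors, details the paper leaves implicit; both checks are valid as you state them.
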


\begin{remark}
   For practical applications, \eg, texture mapping, the decorated discrete conformal
   maps provide us with a \emph{projective interpolation scheme} between triangulated
   PE-surfaces (see \cite{SSP2008}).
   Compared to the standard linear interpolation it looks \enquote{smoother}.
   Furthermore, \textsc{M.~Gillespie} \etal\ showed how the identification
   with lifts to $\RR^{2,1}$ allows us to use this interpolation method even for
   variable combinatorics \cite{GSC2021}.
\end{remark}

\section{Weighted Delaunay triangulations and discrete conformal mapping problems}
\subsection{Weighted Delaunay tessellations of decorated PE-surfaces}
\label{sec:weighted_delaunay_tessellations}
The notions of discrete conformality discussed in the previous section strongly
depends on the choice of triangulation of the PE-surface under consideration.
However, each pair of PE-metric and decoration $(\dist_{\eucsurf_g}, r)$ distinguishes
a unique tessellation of the marked surface $(\eucsurf_g, \verts)$: a
\emph{(intrinsic) weighted Delaunay tessellation}. They are defined via
\emph{properly immersed disks} $(\varphi, D)$, or short \emph{proper disks}.
Here, $\varphi\colon\bar{D}\to\eucsurf_g$ is a continuous map, $D$ is a disk
in the euclidean plane, and $\bar{D}$ its closure. Furthermore, the map
$\varphi|_{D}$ is an \emph{isometric immersion}, \ie, each point $D$ possesses a
neighborhood which is mapped isometrically \wrt\ $\dist_{\eucsurf_g}$. Finally,
the \emph{properness} says that the immersed circle $\varphi(\partial D)$ intersects
no vertex-circle, defined by $r$, more then orthogonally. The intersection angle
is understood to be the interior intersection angle of the disks bounded by
$\varphi(\partial D)$ and the vertex-circles, respectively.

\begin{proposition}[weighted Delaunay tessellations]
   \label{prop:intrinsic_weighted_delaunay_tessellations}
   Let $(\eucsurf_g, \verts, \dist_{\eucsurf_g})$ be a PE-surface.
   Given a hyperideal decoration $r\in\RR_{>0}^{\verts}$ of the vertices
   (see \teqref{eq:hyperideal_decoration_condition}). Then there
   exists a unique geodesic tessellation $\tri$ of $\eucsurf_g$ with vertex set
   $\verts$, such that for each cell of $\tri$ there is a proper disk which is
   orthogonal to the vertex-circles of the cell and either does not, or at most with an
   angle of $\nicefrac{\pi}{2}$, intersect any other vertex-circle. This $\tri$
   is called the \emph{weighted Delaunay tessellation} of
   $(\eucsurf_g, \verts, \dist_{\eucsurf_g})$ \wrt\ $r$.
\end{proposition}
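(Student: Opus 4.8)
The plan is to establish existence and uniqueness of the weighted Delaunay tessellation by lifting the decorated PE-surface to Minkowski $3$-space, following the identification of decorations with point configurations $p_i\in\RR^{2,1}$ developed in \secref{sec:discrete_conformal_maps}. Recall that each vertex $i$ with its vertex-circle lifts to a point $p_i$ with $\|p_i\|_{2,1}^2=r_i^2>0$, \ie, a \emph{space-like} vector lying \enquote{outside} the future cone $\mathcal{F}$. The key observation is that a euclidean circle on the surface orthogonal to a vertex-circle corresponds, under this lift, to a space-like plane in $\RR^{2,1}$, and the orthogonality condition translates into a linear (affine) condition on the lift. Thus the combinatorial-geometric problem of finding a tessellation whose cells each admit an orthogonal proper disk becomes the problem of constructing a convex hull (or equivalently a lower envelope) of the lifted points with respect to the Minkowski structure.

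\smallskip
First I would pass to the universal cover $\tilde{\eucsurf}_g$ of the PE-surface, so that the (developed) vertices together with their decorations yield a discrete, $\Fgroup$-invariant family of space-like points $\{p_v\}$ in $\RR^{2,1}$, where $\Fgroup\cong\pi_1(\eucsurf_g)$ acts by isometries in $\SO^{+}(2,1)$ (together with the appropriate translational part of the holonomy). Next I would form the \emph{convex hull} of this point set in the projective/affine sense, or more precisely the upper boundary of $\conv\{p_v\}$ as seen from the direction distinguishing the future cone. Each face of this convex hull is a maximal collection of lifted vertices lying on a common space-like supporting plane; projecting the supporting plane back to the surface gives exactly a proper disk orthogonal to the vertex-circles of that face, and the convexity/support condition is precisely the requirement that no other vertex-circle is intersected more than orthogonally. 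The partition of $\verts$ into the faces of this hull, pushed back down to $\eucsurf_g$, is the desired tessellation $\tri$; that its cells are geodesic polygons follows because the intersection of a space-like plane with the configuration projects to straight euclidean segments between the vertices.

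\smallskip
For \textbf{existence} I would verify that the hyperideal condition \eqref{eq:hyperideal_decoration_condition} guarantees that no vertex-circle contains another and that the points $p_v$ are in \enquote{convex position} enough that the relevant boundary of the hull is non-degenerate, well-defined, and projects homeomorphically onto $\eucsurf_g$ (this uses completeness and cocompactness of the $\Fgroup$-action to rule out accumulation of the hull faces). For \textbf{uniqueness} I would argue that the weighted Delaunay condition is equivalent to the local \emph{empty-proper-disk} property on each edge: an edge $ij$ belongs to $\tri$ precisely when the proper disk through the two adjacent faces satisfies the orthogonality inequality, and this is an intrinsic condition depending only on $(\dist_{\eucsurf_g}, r)$, not on any auxiliary choice; a standard edge-flipping / local-global argument (as in the Delaunay case) then shows any two tessellations satisfying the condition agree. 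Degenerate cases, where several lifted points are coplanar, correspond to cells that are genuine polygons rather than triangles, which is why the statement produces a tessellation rather than a triangulation.

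\smallskip
The main obstacle I expect is the \emph{properness/convexity analysis in the indefinite Minkowski metric}: unlike the positive-definite setting, convex hulls of space-like points need not have all faces space-like, and one must carefully control the signature of the supporting planes to ensure each hull face really does project to an honest proper disk (an orthogonal circle rather than, say, a horocycle or an equidistant curve). Establishing that the hyperideal inequality \eqref{eq:hyperideal_decoration_condition} is exactly the hypothesis ruling out these pathologies—so that every relevant supporting plane is space-like and the projection is a bona fide geodesic tessellation of the whole surface—is the technical heart of the argument. Once this signature control is in place, existence follows from the compactness afforded by cocompactness of $\Fgroup$, and uniqueness follows from the intrinsic, local characterization of the weighted Delaunay condition.
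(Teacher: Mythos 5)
There is a genuine gap at the very first step: the global lift to Minkowski $3$-space that your whole argument rests on does not exist for a general PE-surface. A PE-metric on $(\eucsurf_g,\verts)$ has cone singularities at the marked points, so its developing map is defined only on the universal cover of the \emph{punctured} surface $\eucsurf_g\setminus\verts$, its holonomy is a representation of $\pi_1(\eucsurf_g\setminus\verts)$ into euclidean isometries (not into $\SO^{+}(2,1)$), and the developed image of the vertex set is in general neither discrete nor embeddable; already a single cone point of angle $\neq 2\pi$ forces the developed vertices to wind around a point infinitely often. Consequently there is no $\Fgroup$-invariant discrete configuration $\{p_v\}\subset\RR^{2,1}$ whose convex hull you could take. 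The Epstein--Penner-type convex hull construction you are invoking (\remref{rem:epstein_penner_convex_hull}) lives on the associated \emph{hyperbolic} surface, whose universal cover genuinely is $\HH^2$ with a Fuchsian deck group; it has no direct analogue on the euclidean cone surface itself, and the lift of \secref{sec:discrete_conformal_maps} that you cite is only a triangle-by-triangle construction (defined up to $\SO^{+}(2,1)$ separately on each face), not a global one. Your appeal to ``cocompactness of the $\Fgroup$-action'' cannot repair this, because the obstruction is the non-discreteness of the developed vertex set caused by the cone angles, not a compactness issue.

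The paper's own proof is deliberately intrinsic for exactly this reason: it reduces to the undecorated case treated in \cite{BS2007}, where existence and uniqueness are obtained from the local empty-disk characterization together with a termination argument for the flip algorithm carried out directly on the cone surface, and notes that those arguments extend to decorations as in \cite[Sec.~3.2]{Lutz2023}. The second half of your proposal (uniqueness via the local characterization of edges and a flip/local-to-global argument) is in the right spirit and is essentially what \propref{prop:euclidean_flip_algoritm} and \lemref{lemma:local_characterization_euc_wdt} formalize; but for existence you would need to replace the convex hull step by such an intrinsic argument (flip algorithm with a proof of termination, or a weighted Voronoi/power-diagram duality built from immersed proper disks), since the ``technical heart'' you identify --- controlling the signature of supporting planes of a global hull --- is moot when the global hull cannot be formed in the first place.
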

\begin{proof}
   In the case $r_i=0$ for all $i\in\verts$ this was proved in \cite{BS2007}.
   One can extend their arguments to the general case (see \cite[Sec.~3.2]{Lutz2023}
   for the analogous considerations for piecewise hyperbolic metrics).
\end{proof}

\begin{remark}
   Requiring that the decoration is hyperideal is sufficient to guarantee the
   existence in \propref{prop:intrinsic_weighted_delaunay_tessellations}. But
   it is not necessary. See \cite[Sec.~2.5]{BI2008} for the general case.
\end{remark}

Given a marked surface $(\eucsurf_g, \verts)$ endowed with the
decorated PE-metric $(\dist_{\eucsurf_g}, r)$. We call a geodesic triangulation
of this PE-surface which refines the weighted Delaunay tessellation a
\emph{weighted Delaunay triangulation} \wrt\ $r$. Note that
weighted Delaunay triangulations are in general not unique. For a given weighted
Delaunay triangulation $\tri$ of the decorated PE-surface we denote by
$\mathcal{C}_{\tri}(\dist_{\eucsurf_g}, r)\subset\RR^{V}$ the set of logarithmic
scale factors such that $\tri$ is still a weighted Delaunay triangulation after
discrete conformally changing the decorated metric.

\begin{proposition}\label{prop:space_of_log_factors}
   The space $\mathcal{C}_{\tri}(\dist_{\eucsurf_g}, r)$ is homeomorphic to a
   polyhedral cone (with its apex removed). In particular, its interior is
   homeomorphic to $\RR^{\verts}$.
\end{proposition}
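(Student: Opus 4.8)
The plan is to show that, after a logarithmic change of coordinates, $\mathcal{C}_{\tri}(\dist_{\eucsurf_g},r)$ is cut out by finitely many homogeneous linear inequalities. First I would use the local nature of the weighted Delaunay property (\propref{prop:intrinsic_weighted_delaunay_tessellations}): a triangulation $\tri$ refining the weighted Delaunay tessellation stays weighted Delaunay after a conformal change with factors $u$ exactly when, for every edge $ij\in\edges$ (shared by faces $ijk$ and $ijl$), the corresponding local empty-disk condition holds. To make this condition explicit I would pass to the Minkowski lift of \secref{sec:discrete_conformal_maps}: develop the two adjacent triangles into $\RR^{2,1}$ to obtain lifted vertices $p_i,p_j,p_k,p_l$, and recall that the conformal change acts by $p_m\mapsto\ee^{u_m}p_m$ (\propref{prop:conformal_change}). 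The local weighted Delaunay condition at $ij$ is the projective convexity of the lifted surface across the edge, that is, the sign condition $F_{ij}(u)\geq0$ on the affine determinant
\[
   F_{ij}(u)=\det\begin{pmatrix}
      \ee^{u_i}p_i & \ee^{u_j}p_j & \ee^{u_k}p_k & \ee^{u_l}p_l\\
      1 & 1 & 1 & 1
   \end{pmatrix},
\]
where each $\ee^{u_m}p_m$ is read as a column in $\RR^3\cong\RR^{2,1}$. Since the sign of this affine determinant does not depend on the inner product, it records precisely the convexity of the four lifts (see \figref{fig:notation_face_circle_and_lift}).

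The key observation is that this sign condition linearizes. Expanding $F_{ij}$ along its bottom row shows it is a combination of the four monomials $\ee^{u_j+u_k+u_l},\dots,\ee^{u_i+u_j+u_k}$, hence homogeneous of degree three in the variables $\ee^{u_m}$. Dividing by the strictly positive factor $\ee^{u_i+u_j+u_k+u_l}$, which leaves the sign unchanged, turns $F_{ij}(u)\geq0$ into
\[
   -[p_jp_kp_l]\,\ee^{-u_i}
   +[p_ip_kp_l]\,\ee^{-u_j}
   -[p_ip_jp_l]\,\ee^{-u_k}
   +[p_ip_jp_k]\,\ee^{-u_l}
   \;\geq\;0,
\]
where $[p_ap_bp_c]$ denotes the $3\times3$ determinant of the three lifts. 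This is a homogeneous linear inequality in the variables $w_m\coloneq\ee^{-u_m}$. As there are finitely many edges, $\mathcal{C}_{\tri}(\dist_{\eucsurf_g},r)$ is described by finitely many such inequalities.

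Finally I would read off the topology. The map $u\mapsto w=(\ee^{-u_m})_{m\in\verts}$ is a homeomorphism of $\RR^{\verts}$ onto the open positive orthant $\RR_{>0}^{\verts}$, and under it $\mathcal{C}_{\tri}(\dist_{\eucsurf_g},r)$ corresponds to the intersection of $\RR_{>0}^{\verts}$ with finitely many closed half-spaces through the origin, \ie\ with a polyhedral cone. The apex $w=0$ corresponds to $u\to+\infty$ along $\mathbf{1}$ and is never attained, which accounts for the removed apex; the invariance of the whole construction under global scaling $u\mapsto u+t\mathbf{1}$ (equivalently $w\mapsto\ee^{-t}w$) confirms the cone structure. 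The interior of $\mathcal{C}_{\tri}(\dist_{\eucsurf_g},r)$ is the locus where all inequalities are strict, a nonempty open convex cone (nonemptiness holds since $\tri$ is a weighted Delaunay triangulation, so its strict local-Delaunay region is full-dimensional); being a nonempty open convex subset of $\RR^{\verts}$ it is homeomorphic to $\RR^{\verts}$. The main obstacle is the reduction carried out in the first two steps: pinning down that the weighted Delaunay property is exactly this convexity sign condition on the lifts, including the correct sign conventions, the treatment of non-simplicial faces and self-adjacent edges (where $i,j,k,l$ need not be distinct on $\eucsurf_g$ but their developed lifts are, as in the bookkeeping remark of \secref{sec:pe_surfaces_and_notation}), and the matching of the $\geq$ versus $>$ alternatives so that the identification with a polyhedral cone minus apex is exact. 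Once the determinant formulation is in place, the linearization is a direct computation.
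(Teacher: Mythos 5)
Your argument is correct in outline but takes a genuinely different route from the paper. The paper's proof is a two-step transfer: it identifies $\mathcal{C}_{\tri}(\dist_{\eucsurf_g},r)$ with the space of weights $\mathcal{D}_{\tri}(\surf_g)$ on the fundamental discrete conformal invariant (\propref{prop:correspondence_spaces}, via convexity of polyhedral cusps and \lemref{lemma:local_delaunay_convexity_equivalence}) and then quotes the polyhedral-cone structure of $\mathcal{D}_{\tri}(\surf_g)$ from the reference behind \propref{prop:properties_of_weightings}, item (ii). You instead stay on the euclidean side and linearize the edge-local weighted Delaunay condition in the variables $w_m=\ee^{-u_m}$ via the Minkowski lift; this is in effect the secondary-fan/Epstein--Penner argument that underlies the cited result (\cf\ \remref{rem:epstein_penner_convex_hull}), carried out directly. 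Your route is self-contained, produces the defining linear inequalities explicitly, and avoids the paper's forward reference to \propref{prop:correspondence_spaces}; the paper's route gets the proposition essentially for free because the chain of homeomorphisms $\mathcal{C}_{\tri}\cong\mathcal{P}_{\tri}\cong\mathcal{D}_{\tri}$ is needed later anyway.

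Three points to tighten. The edge-locality you invoke is \propref{prop:euclidean_flip_algoritm}, item (i), not \propref{prop:intrinsic_weighted_delaunay_tessellations}. The equivalence \enquote{locally weighted Delaunay at $ij$ $\iff$ $F_{ij}(u)\geq0$}, which you rightly flag as the main obstacle, does follow from the paper's own lift: developing $ijk$ into $\{z=r_{ijk}\}$ so that $C_{ijk}$ is the section of the future cone, a circle with center $(x_l,y_l)$ and radius $r_l$ lifts to $q=(x_l,y_l,z_l)$ with $z_l^2=x_l^2+y_l^2-r_l^2$, and its inversive distance to $C_{ijk}$ equals $(z_l^2-r_{ijk}^2)/(2r_{ijk}r_l)$, so the properness condition is exactly $z_l\geq r_{ijk}$, \ie\ $p_l$ on or above the affine plane of $p_i,p_j,p_k$; one must also note that the two adjacent triangles admit a common development with consistent lifts of $i$ and $j$, which holds since $p_i,p_j$ are determined up to $\SO^+(2,1)$ by $r_i$, $r_j$, $\len_{ij}$. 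Finally, the nonemptiness of the interior (full-dimensionality of the cone) is not immediate from the inequalities alone, since $u=0$ may lie on the boundary when the weighted Delaunay tessellation is not a triangulation; the paper's own one-line proof inherits the same issue from \propref{prop:properties_of_weightings}, so this is not a defect specific to your approach.
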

\begin{proof}
   This is a combination of \propref{prop:properties_of_weightings} item
   \ref{item:canonical_cell_characterization} and \propref{prop:correspondence_spaces}.
\end{proof}

\begin{remark}
   In the literature a hyperideal decorated PE-surface together with a
   weighted Delaunay triangulation is also called a
   \emph{hyperideal circle pattern} \cite{Schlenker2008,Springborn2008}.
\end{remark}

It will be useful to have a local characterization of weighted Delaunay triangulations.
To this end, let us consider a decorated euclidean triangle $ijk$. Denote by
$\alpha_{ij}^k$ the interior intersection angle of the face-circle $C_{ijk}$
and the edge $ij$. Furthermore, let $r_{ij}$ be half of the distance between
the two intersection-points of $C_{ijk}$ with $ij$
(see \figref{fig:decorated_triangles_and_weighted_Delaunay}, left).
Note that $r_{ij}$ is the radius of the unique circle which is orthogonal to
the edge $ij$ and intersects it in the same points as $C_{ijk}$. It can be computed
using $r_i$, $r_j$ and $\len_{ij}$ only. Now, the (oriented) distance $d_{ij}^k$
between the center of $C_{ijk}$ and the edge $ij$ can express as
\begin{equation}\label{eq:distances_by_cotan}
   d_{ij}^k \;=\; r_{ij}\cot\alpha_{ij}^k,
\end{equation}
where the orientation is chosen such that $d_{ij}^k$ is positive if the center
lies on the same side of $ij$ as the triangle.

\begin{figure}[t]
   \centering
   \labellist
   \small\hair 2pt
   \pinlabel $i$ at 130 170
   \pinlabel $j$ at 470 170
   \pinlabel $k$ at 305 468
   \pinlabel $\alpha_{ij}^k$ at 305 50
   \pinlabel $\alpha_{jk}^i$ at 107 372
   \pinlabel $\alpha_{ki}^j$ at 495 370
   \pinlabel $r_{ij}$ at 305 200
   \pinlabel $r_{jk}$ at 355 285
   \pinlabel $r_{ki}$ at 250 290
   \endlabellist
   \includegraphics[width=0.42\textwidth]{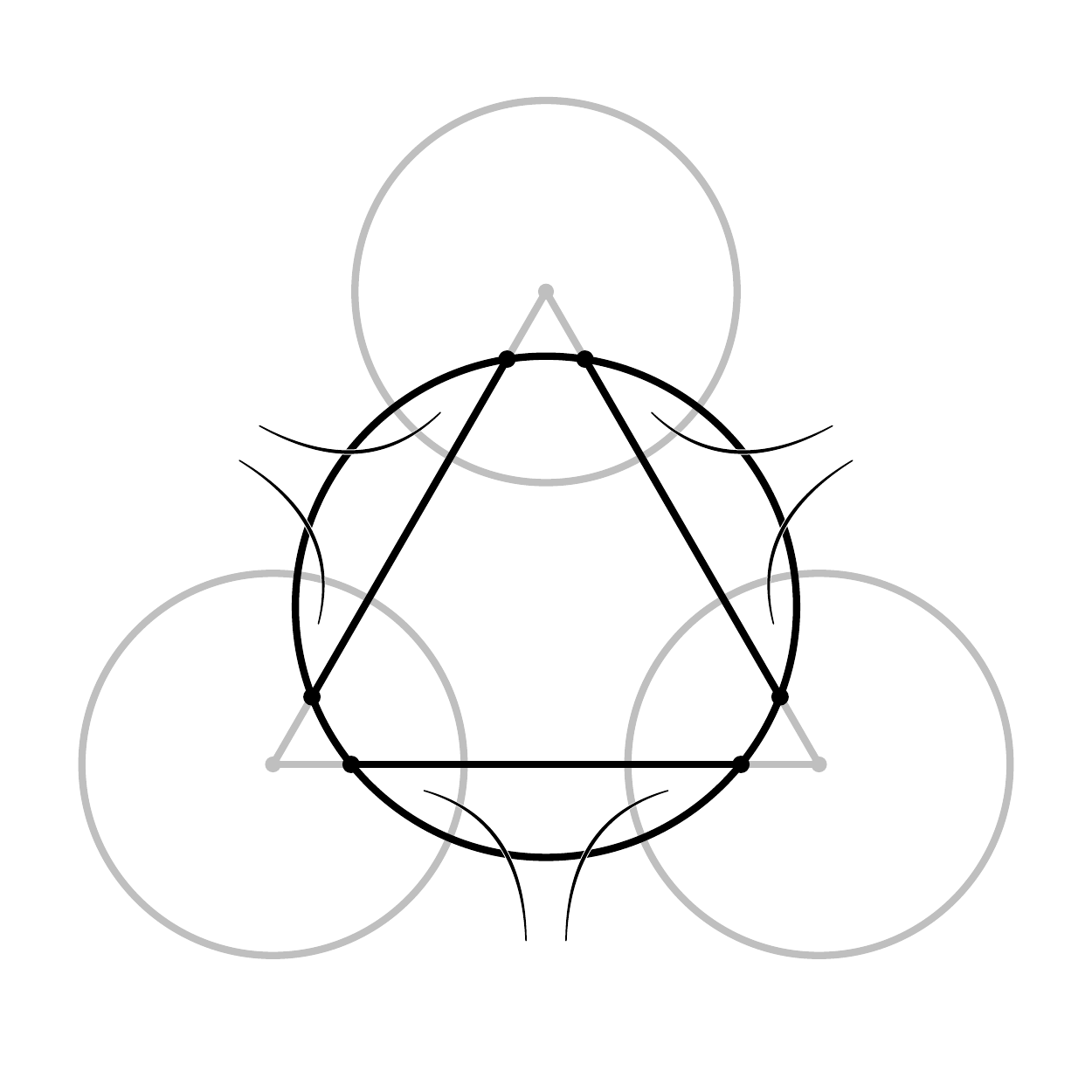}
   \labellist
   \small\hair 2pt
   \pinlabel $i$ at 402 127
   \pinlabel $j$ at 403 475
   \pinlabel $k$ at 122 305
   \pinlabel $l$ at 680 303
   \pinlabel $d_{ij}^k$ at 355 330
   \pinlabel $d_{ij}^l$ at 437 330
   \endlabellist
   \includegraphics[width=0.57\textwidth]{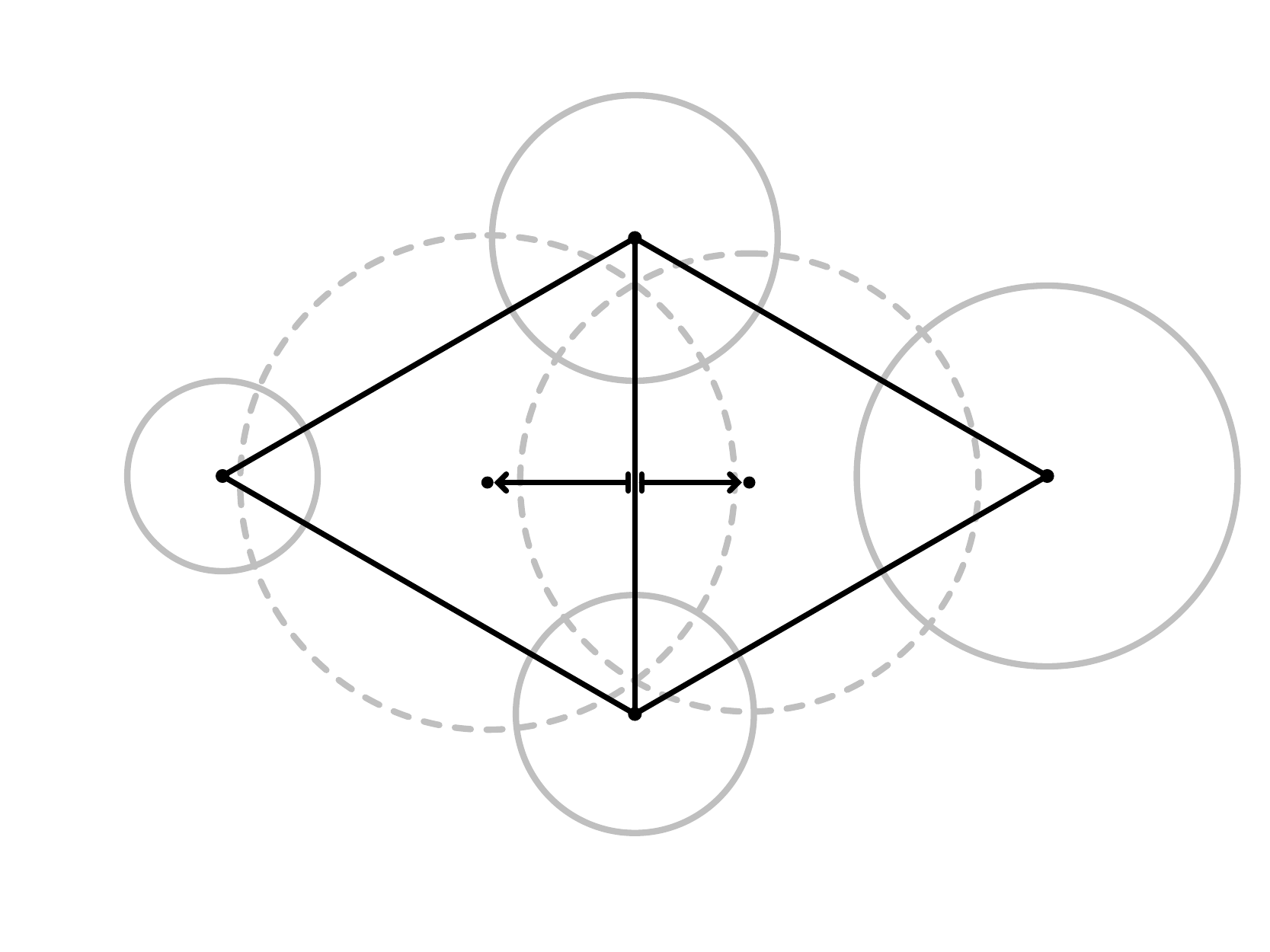}
   \caption{%
      Sketch of geometric quantities in a decorated euclidean triangle which
      can be used to locally characterize weighted Delaunay triangulations.
   }
   \label{fig:decorated_triangles_and_weighted_Delaunay}
\end{figure}

\begin{lemma}\label{lemma:local_characterization_euc_wdt}
   Given a decorated euclidean quadrilateral $iljk$. The following
   statements are equivalent
   (see \figref{fig:decorated_triangles_and_weighted_Delaunay}, right):
   \begin{enumerate}[label=(\roman*)]
      \item\label{item:euclidean_local_proper_disk}
         The proper disk of the decorated triangle $ijk$ intersects the
         vertex-circle at $l$ either not at all or at an angle not larger than
         $\nicefrac{\pi}{2}$.
      \item\label{item:euclidean_local_proper_distance}
         The center of the proper disk of the hyperideal triangle $ijk$
         \enquote{lies to the left} of the center corresponding to $ilj$, \ie,
         \begin{equation}\label{eq:local_delaunay_condition}
            d_{ij}^k \,+\, d_{ij}^l \;\geq\; 0.
         \end{equation}
      \item\label{item:euclidean_local_proper_angles}
         The proper disks intersect with an angle less or equal to $\pi$, \ie,
         \begin{equation}
            \alpha_{ij}^k \,+\, \alpha_{ij}^l \;\leq\; \pi.
         \end{equation}
   \end{enumerate}
\end{lemma}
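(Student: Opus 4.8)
The plan is to lift the quadrilateral to the plane (as permitted by the bookkeeping remark), place the diagonal $ij$ along the $x$-axis with $k$ above and $l$ below, and exploit that both face-circles $C_{ijk}$ and $C_{ilj}$ are orthogonal to the two common vertex-circles at $i$ and $j$. First I would record the elementary fact that any circle orthogonal to both of these vertex-circles has its center on a fixed line perpendicular to $ij$: writing down the two orthogonality relations and subtracting them eliminates the radius and pins the $x$-coordinate $a$ of the center. Consequently every such circle meets the line through $ij$ in the \emph{same} pair of points $P_\pm=(a\pm r_{ij},0)$, with $r_{ij}=\sqrt{a^2-r_i^2}$ depending only on $r_i,r_j,\len_{ij}$, and the two face-circles $C_{ijk},C_{ilj}$ lie in the single pencil of circles through $P_+,P_-$, parametrized by the signed height $b$ of the center. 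By the orientation convention of \teqref{eq:distances_by_cotan}, the signed distances are precisely these heights, $d_{ij}^k=b_k$ and $d_{ij}^l=-b_l$ (the sign flips for $l$, which lies on the opposite side of $ij$). Hence $d_{ij}^k+d_{ij}^l=b_k-b_l$, and the whole lemma reduces to showing that each of \ref{item:euclidean_local_proper_disk} and \ref{item:euclidean_local_proper_angles} is equivalent to the single inequality $b_k\ge b_l$, which is exactly \ref{item:euclidean_local_proper_distance}.

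For the equivalence with \ref{item:euclidean_local_proper_angles} I would use \teqref{eq:distances_by_cotan} in the form $\cot\alpha_{ij}^k=b_k/r_{ij}$ and $\cot\alpha_{ij}^l=-b_l/r_{ij}$. Since both angles lie in $(0,\pi)$ and $\cot$ is a strictly decreasing bijection $(0,\pi)\to\RR$ with $\cot(\pi-x)=-\cot x$, the inequality $\alpha_{ij}^k+\alpha_{ij}^l\le\pi$ is equivalent to $\cot\alpha_{ij}^k\ge-\cot\alpha_{ij}^l$, i.e.\ to $b_k\ge b_l$. This part is pure monotonicity and needs no further geometry.

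The step requiring care is \ref{item:euclidean_local_proper_disk}. Here I would compute the inversive distance \eqref{eq:def_inversive_distance} between the pencil circle $C_b$ (center $c_b=(a,b)$, radius $\sqrt{r_{ij}^2+b^2}$) and the vertex-circle at $l=(l_x,l_y)$ with $l_y<0$. Its numerator $N(b)=\lVert c_b-l\rVert^2-(r_{ij}^2+b^2)-r_l^2$ is affine in $b$ with slope $-2l_y>0$, and it vanishes at $b=b_l$ because $C_{ilj}$ is orthogonal to the vertex-circle at $l$ by the defining property of the face-circle; hence $N(b)=-2l_y(b-b_l)$ and $N(b)\ge0\iff b\ge b_l$. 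The subtlety is the angle convention: the \emph{interior} intersection angle $\beta$ of the two disks satisfies $\cos\beta=I_{C_b,l}$, the inversive distance itself rather than its negative, so that orthogonal circles give $\beta=\tfrac{\pi}{2}$, deep (nested) overlap gives $\beta\to\pi$, and the non-overlapping cases correspond to $I_{C_b,l}\ge1$. With this convention, the disks either fail to overlap or meet at $\beta\le\tfrac{\pi}{2}$ exactly when $I_{C_b,l}\ge0$, i.e.\ when $N(b)\ge0$, i.e.\ when $b\ge b_l$; evaluating at $b=b_k$ yields \ref{item:euclidean_local_proper_disk}$\iff b_k\ge b_l$ and closes the chain of equivalences.

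I expect the main obstacle to be precisely this last bookkeeping of the angle convention: verifying that the interior disk-angle equals $\arccos$ of the inversive distance (not its supplement), and checking that the \enquote{does not intersect at all} alternative in \ref{item:euclidean_local_proper_disk} is absorbed uniformly into the single sign condition $I_{C_b,l}\ge0$. Everything else is the pencil observation together with the monotonicity of $\cot$.
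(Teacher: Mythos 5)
Your proposal is correct and takes essentially the same route as the paper: the pencil of circles orthogonal to the vertex-circles at $i$ and $j$ (centers on the radical line, common points $P_\pm$ on the line through $ij$), the relation \eqref{eq:distances_by_cotan} together with the monotonicity of $\cot$ on $(0,\pi)$ for \ref{item:euclidean_local_proper_distance}$\Leftrightarrow$\ref{item:euclidean_local_proper_angles}, and the sign of the inversive distance to the vertex-circle at $l$ along the pencil for \ref{item:euclidean_local_proper_disk}$\Leftrightarrow$\ref{item:euclidean_local_proper_distance}. Your reduction to the affine, strictly increasing numerator $N(b)$ is in fact a slightly sharper bookkeeping than the paper's claim that the inversive distance itself is monotone in the parameter (only its sign behaviour is needed, and only the numerator is genuinely monotone), and you flag the same convention issues (interior disk-angle equals $\arccos$ of the inversive distance; the non-intersecting alternative absorbed into $I\geq 0$) that the paper delegates to the standard literature.
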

\begin{proof}
   The equivalence of items \ref{item:euclidean_local_proper_distance} and
   \ref{item:euclidean_local_proper_angles} follows from
   \teqref{eq:distances_by_cotan} and
   \begin{equation}\label{eq:alternative_cotan_weight_formula}
      \cot\alpha_{ij}^k \,+\, \cot\alpha_{ij}^l
      \;=\;
      \frac{\sin\big(\alpha_{ij}^k+\alpha_{ij}^l\big)}
         {\sin\alpha_{ij}^k\sin\alpha_{ij}^l}.
   \end{equation}
   To see the other equivalence, we remember that the centers of all circles
   which are simultaneously orthogonal to the
   vertex-circles at $i$ and $j$ lie on a line, their \emph{radical line}.
   It is given by the perpendicular bisector of the line segment between the
   two intersection points of the face-circle $C_{ijk}$ with the edge $ij$.
   The distance $d_{ij}^k$ parametrizes this line over $\RR$ and thus the
   $1$-parameter family of circles orthogonal to the vertex-circles at $i$ and $j$.
   It follows, that the inversive distance (\teqref{eq:def_inversive_distance})
   between such a circle and the vertex-circle at $l$ is a monotonically increasing
   function (in this parametrization). We observe that this inversive distance is
   $1$ if $d_{ij}^k = -d_{ij}^l$. Now, the standard relationship of the inversive
   distance to intersection angles of circles (see, \eg, \cite[Sec.~1]{BH2003})
   gives the equivalence of items \ref{item:euclidean_local_proper_disk} and
   \ref{item:euclidean_local_proper_distance}.
\end{proof}

\begin{proposition}[weighted Delaunay triangulations, local characterization]
   \label{prop:euclidean_flip_algoritm}
   Given a decorated PE-metric $(\dist_{\eucsurf_g}, r)$ on the marked surface
   $(\eucsurf_g, \verts)$.
   \begin{enumerate}[label=(\roman*)]
      \item
         A geodesic triangulation $\tri$ of the PE-surface
         $(\eucsurf_g, \verts, \dist_{\eucsurf_g})$ is a weighted Delaunay
         triangulation \wrt\ $r$ if and only if all edges of $\tri$ satisfy condition
         \eqref{eq:local_delaunay_condition} (or an equivalent).
      \item
         Two weighted Delaunay triangulations of
         $(\eucsurf_g, \verts, \dist_{\eucsurf_g})$ \wrt\ $r$ only differ on
         edges satisfying condition \eqref{eq:local_delaunay_condition} with
         equality.
      \item\label{item:euclidean_flip_algorithm}
         A weighted Delaunay triangulation can be computed, starting from any
         geodesic triangulation, by the \emph{flip algorithm}. That is,
         consecutively flipping edges violating condition
         \eqref{eq:local_delaunay_condition}, \ie, locally modifying the geodesic
         triangulation by switching between the two possible
         triangulations of a quadrilateral.
   \end{enumerate}
\end{proposition}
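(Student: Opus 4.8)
The plan is to prove the three items together, the key preparatory step being a dictionary between the cells of the weighted Delaunay tessellation and the face-circles of a refining triangulation. The crucial observation is that the proper disk associated to a cell in \propref{prop:intrinsic_weighted_delaunay_tessellations} is \emph{simultaneously} orthogonal to all vertex-circles of that cell; since the face-circle $C_{ijk}$ of a triangle is the \emph{unique} circle orthogonal to the three vertex-circles at $i,j,k$ (\secref{sec:discrete_conformal_maps}), every triangle appearing in a triangular refinement of a given cell has the \emph{same} face-circle, namely the cell's proper disk. In particular an interior diagonal of a cell is an edge $ij$ with $C_{ijk}=C_{ilj}$; comparing the signed distances from the common center to the line $ij$ from the two opposite sides via \teqref{eq:distances_by_cotan} then forces $d_{ij}^k=-d_{ij}^l$, i.e.\ equality in \eqref{eq:local_delaunay_condition}.

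With this dictionary the forward direction of item (i) is immediate. If $\tri$ refines the weighted Delaunay tessellation and $ij$ is an edge with neighbouring triangles $ijk$ and $ilj$, then $C_{ijk}$ equals the proper disk of the cell containing $ijk$, which by \propref{prop:intrinsic_weighted_delaunay_tessellations} meets the vertex-circle at $l$ at most orthogonally (this holds whether $l$ lies in an adjacent cell or, for an interior diagonal, in the same cell). This is exactly condition \ref{item:euclidean_local_proper_disk} of \lemref{lemma:local_characterization_euc_wdt}, so \eqref{eq:local_delaunay_condition} holds at every edge. Item (ii) then follows from the uniqueness in \propref{prop:intrinsic_weighted_delaunay_tessellations}: two weighted Delaunay triangulations refine the same tessellation, hence agree on its essential edges and can differ only in how the polygonal cells are cut by interior diagonals, which are precisely the edges realising equality in \eqref{eq:local_delaunay_condition}.

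For the converse in item (i) the heart of the matter is a local-to-global argument. Assuming \eqref{eq:local_delaunay_condition} at every edge, I want to show that each face-circle $C_{ijk}$ is \emph{proper}, i.e.\ meets \emph{every} vertex-circle---not only those of the triangles sharing an edge with $ijk$---at most orthogonally; the uniqueness in \propref{prop:intrinsic_weighted_delaunay_tessellations} then identifies the tessellation obtained from $\tri$ by deleting its equality-edges with the weighted Delaunay tessellation, so that $\tri$ refines it. Conceptually this is convexity: under the Minkowski lift of \secref{sec:discrete_conformal_maps} a vertex-circle corresponds to a spacelike vector with $\|v_i\|_{2,1}^2=r_i^2$, a face-circle to the affine plane it spans, and \eqref{eq:local_delaunay_condition} says that the piecewise-linear lift of $\tri$ bends the convex way across $ij$. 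A piecewise-linear surface bending convexly across every edge is globally convex and hence coincides with the boundary of the convex hull of the lifted vertices, whose supporting planes certify properness of every $C_{ijk}$. Carrying this out equivariantly over the (possibly non-simply-connected, non-simplicial) surface is exactly the decorated analogue of the argument of \textsc{Bobenko} and \textsc{Springborn} \cite{BS2007} for $r\equiv0$, and I would either transcribe their scheme or reduce to \cite[Sec.~3.2]{Lutz2023}. This local-convexity-implies-global-convexity step is the main obstacle of the whole proposition.

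Finally, for the flip algorithm in item (iii) the plan is to exhibit a functional that strictly decreases at every flip of an edge violating \eqref{eq:local_delaunay_condition}. Replacing a diagonal that bends non-convexly by the other diagonal moves the lift strictly towards its convex hull, so the volume trapped between the piecewise-linear lift and the hull drops; consequently no triangulation can recur and the procedure terminates after finitely many flips, upon which no edge violates \eqref{eq:local_delaunay_condition} and item (i) yields the weighted Delaunay property. The delicate point, as already in the undecorated case, is that a PE-surface may carry infinitely many geodesic triangulations, so termination must be secured through strict monotonicity of a potential bounded below rather than by counting, together with the check that the edges eligible for flipping are exactly those with $d_{ij}^k+d_{ij}^l<0$.
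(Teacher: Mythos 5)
The paper's own ``proof'' of this proposition is a two-line citation to \cite[\S~2.5]{BI2008} (and to \cite{BS2007} for $r\equiv 0$), so you are attempting substantially more than the authors do. Your preparatory dictionary is correct: since the face-circle $C_{ijk}$ is the \emph{unique} circle orthogonal to the three vertex-circles, every triangle refining a cell of the weighted Delaunay tessellation inherits the cell's proper disk as its face-circle, interior diagonals give $d_{ij}^k=-d_{ij}^l$, and the forward direction of (i) and all of (ii) follow as you say. Deferring the local-to-global converse of (i) to \cite{BS2007}/\cite{Lutz2023} is no worse than what the paper does, though note that your convex-hull heuristic does not apply verbatim: the development of the universal cover into the plane need not be injective, so the lifted PL surface is not a graph, which is precisely why those references use a different (connecting-geodesic) argument.

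The genuine gap is in item (iii). First, strict monotonicity of a potential bounded below does \emph{not} imply termination of a discrete process: a strictly decreasing sequence bounded below can be infinite. Since, as you yourself note, a PE-surface carries infinitely many geodesic triangulations, you must supply the missing finiteness ingredient --- e.g.\ that only finitely many geodesic triangulations have potential value below that of the starting one. In \cite{BS2007}/\cite{BI2008} this is arranged by choosing a potential that controls edge lengths, combined with the fact that a PE-surface has only finitely many geodesic arcs between marked points below any given length bound; your ``volume trapped between the lift and the hull'' does not obviously have this property (and must in any case be defined per fundamental domain to be finite). Second, you never verify that an edge violating \eqref{eq:local_delaunay_condition} is actually \emph{flippable}, i.e.\ that the two adjacent triangles are distinct and form a quadrilateral containing the other diagonal; without this the flip algorithm is not even well defined, and in the weighted setting this requires an argument (it is part of what \cite[\S~2.5]{BI2008} establishes). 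Until these two points are filled in, item (iii) is not proved.
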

\begin{proof}
   Weighted Delaunay triangulations and their computability by the flip algorithm
   were treated in \cite[\S~2.5]{BI2008}. For an alternative proof in the case of
   \enquote{undecorated} PE-surfaces see \cite{BS2007}.
\end{proof}

\subsection{The local discrete conformal mapping problems}
To the notion of discrete conformal equivalence corresponds the following
discrete conformal mapping problem:

\begin{problem}[prescribed angle sums]
   \label{problem:prescribed_angle_sums}
   \textbf{\emph{Given}}
   \begin{itemize}
      \item
         a decorated PE-metric $(\dist_{\eucsurf_g}, r)$ on the marked
         surface $(\eucsurf_g, \verts)$,
      \item
         a weighted Delaunay triangulation $\tri$ \wrt\ $r$,
      \item
         and a desired angel sum $\Theta_i$ for each vertex $i\in\verts$.
   \end{itemize}
   \textbf{\emph{Find}} logarithmic scale factors
   $u\in\mathcal{C}_{\tri}(\dist_{\eucsurf_g}, r)$ such that the
   discrete conformally changed PE-metric \wrt\ $u$ has angle sum $\Theta_i$
   about each vertex $i\in\verts$.
\end{problem}

Several special cases of \probref{problem:prescribed_angle_sums}
are interesting on their one. One of them is the analogue
of the Poincar\'e--Koebe uniformization theorem for decorated PE-metrics.

\begin{problem}[discrete uniformization]
   \label{problem:uniformization}
   \textbf{\emph{Given}}
   \begin{itemize}
      \item
         a decorated PE-metric $(\dist_{\eucsurf_g}, r)$ on the marked
         surface $(\eucsurf_g, \verts)$,
      \item
         and a weighted Delaunay triangulation $\tri$ \wrt\ $r$.
   \end{itemize}
   \textbf{\emph{Find}} logarithmic scale factors
   $u\in\mathcal{C}_{\tri}(\dist_{\eucsurf_g}, r)$ such that the
   discrete conformally changed PE-metric \wrt\ $u$
   has angle sum $\nicefrac{2\pi(2g-2+|V|)}{|V|}$ about
   each vertex $i\in\verts$.
\end{problem}

In particular, \probref{problem:uniformization} asks to find a flat metric,
\ie, $\Theta_i=2\pi$ for all $i\in\verts$, for PE-surfaces of genus $1$.
Another sub-problem is to find such flat metrics on simply connected decorated
PE-surfaces with boundary:

\begin{problem}[planar triangulation with prescribed boundary angles]
   \label{problem:prescirbed_boundary}
   \textbf{\emph{Given}}
   \begin{itemize}
      \item
         a decorated PE-metric $(\dist_{\mathbb{D}}, r)$ on the (topological)
         disk $\mathbb{D}$ with marking $\verts$,
      \item
         a geodesic triangulation $\tri$ of the PE-surface
         $(\mathbb{D}, \verts, \dist_{\mathbb{D}})$, which is the restriction
         of a weighted Delaunay tessellation of its geometric double (see Remark
         \ref{remark:geometric_double}),
      \item
         and a desired angle sum $\Theta_i$ for each boundary vertex $i$.
   \end{itemize}
   \textbf{\emph{Find}} logarithmic scale factors
   $u\in\mathcal{C}_{\tri}(\dist_{\mathbb{D}}, r)$ such that the
   discrete conformally changed PE-metric \wrt\ $u$ is planar with the
   prescribed angle sums at the boundary.
\end{problem}

\begin{proposition}
   \label{prop:solution_to_conformal_mapping_problems}
   If \probref{problem:prescribed_angle_sums} has a solution, then
   it is unique up to scale. The solution can be found by maximizing a
   concave functional.
\end{proposition}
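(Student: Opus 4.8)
The plan is to exhibit a single concave function on $\mathcal{C}_{\tri}(\dist_{\eucsurf_g}, r)$ whose critical points are exactly the solutions of \probref{problem:prescribed_angle_sums}, and then to read off both assertions from its second-order behaviour. By \propref{prop:space_of_log_factors} the domain $\mathcal{C}_{\tri}(\dist_{\eucsurf_g}, r)$ is convex and simply connected, so it suffices to produce a closed $1$-form with the right zeros and integrate it. For $u$ in the interior, \propref{prop:conformal_change} turns $u$ into a decorated metric $(\tilde{\len}(u),\tilde{r}(u))$, hence into cone angles $\theta_i(u)=\sum_{jk}\theta_{jk}^i(u)$. I would consider the $1$-form $\omega=\sum_i(\theta_i(u)-\Theta_i)\,du_i$ and show it is closed; since on each triangle the three corner angles are smooth functions of $u$ via the law of cosines and \teqref{eq:conformal_change_l}, closedness reduces to the per-triangle symmetry $\partial\theta_{jk}^i/\partial u_j=\partial\theta_{ki}^j/\partial u_i$ of mixed partials, a discrete Schläfli identity. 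Integrating $\omega$ gives a function $E_\Theta$ with $\nabla E_\Theta=\theta-\Theta$, so $\nabla E_\Theta(u)=0$ holds precisely when $u$ solves \probref{problem:prescribed_angle_sums}.

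The heart of the argument is that $E_\Theta$ is concave, and this is where I expect the only real work. Its Hessian is the matrix $\bigl(\partial\theta_i/\partial u_j\bigr)_{ij}$, which is symmetric by the closedness above and has vanishing row sums because the three angles of each triangle sum to the constant $\pi$. Consequently it is a \emph{signed} graph Laplacian on the edge graph of $\tri$: the entry attached to an edge $ij$ collects the contributions of the two incident triangles $ijk$ and $ijl$, and the computational step is to show that this off-diagonal entry is a positive multiple of $\cot\alpha_{ij}^k+\cot\alpha_{ij}^l$, where $\alpha_{ij}^k$ is the face-circle intersection angle of \figref{fig:decorated_triangles_and_weighted_Delaunay}. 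Granting this, the point is that on $\mathcal{C}_{\tri}(\dist_{\eucsurf_g}, r)$ every edge satisfies the weighted Delaunay inequality, so by item~\ref{item:euclidean_local_proper_angles} of \lemref{lemma:local_characterization_euc_wdt} together with \teqref{eq:alternative_cotan_weight_formula} all these cotangent weights are nonnegative. Hence the Hessian is minus a genuine (nonnegatively weighted) graph Laplacian: it is negative semidefinite, with kernel spanned by the all-ones vector $\mathbf{1}$ as soon as the surface is connected. Thus $E_\Theta$ is concave.

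It remains to convert concavity into the two claims. The direction $\mathbf{1}$ is exactly the rescaling direction: by \teqref{eq:conformal_change_r} and \teqref{eq:conformal_change_l}, replacing $u$ by $u+c\mathbf{1}$ multiplies all $\tilde{\len}_{ij}$ and $\tilde{r}_i$ by $\ee^{c}$ and leaves every angle $\theta_i$ unchanged, which both explains the one-dimensional kernel and shows that two solutions differing by $c\mathbf{1}$ describe the same decorated metric up to scale. Restricting $E_\Theta$ to the transversal slice $\{\sum_i u_i=0\}$ therefore yields a \emph{strictly} concave function, which has at most one critical point; and because any critical point of a concave function is a global maximum, every solution of \probref{problem:prescribed_angle_sums} is such a maximizer. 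Combining these facts, if \probref{problem:prescribed_angle_sums} has a solution it is unique up to scale and is located by maximizing the concave functional $E_\Theta$, which is the assertion of the proposition.
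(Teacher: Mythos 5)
Your overall strategy --- integrate the closed $1$-form $\sum_i(\theta_i-\Theta_i)\,du_i$, identify the Hessian $(\partial\theta_i/\partial u_j)$ with a nonpositively weighted graph Laplacian whose edge weights are $\tfrac{r_{ij}}{\len_{ij}}\bigl(\cot\alpha_{ij}^k+\cot\alpha_{ij}^l\bigr)\geq 0$ on the weighted Delaunay cell, and conclude concavity with kernel $\RR\mathbf{1}$ --- is exactly the local computation the paper records in \lemref{lemma:angle_jacobian_formula} and \lemref{lemma:euclidean_he_functional_hessian}. But there is a genuine gap where you convert negative semidefiniteness of the Hessian into ``$E_\Theta$ is concave, hence has at most one critical point on the slice and every critical point is a global maximum.'' That inference requires the domain of $E_\Theta$ to be \emph{convex}, and you assert convexity of $\mathcal{C}_{\tri}(\dist_{\eucsurf_g},r)$ without justification. \propref{prop:space_of_log_factors} only says this set is \emph{homeomorphic} to a polyhedral cone; the actual polyhedral structure lives in the weight coordinates $\omega_i=\ee^{h_i}$ on the hyperbolic surface (\propref{prop:properties_of_weightings}), and the passage $u_i=h_i-\tilde h_i$, $\omega_i=\ee^{h_i}$ does not carry convexity back to the $u$-coordinates: a constraint of the form $\sum_j a_j\ee^{h_j}\geq 0$ with coefficients of mixed sign is in general not convex in $h$. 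On a non-convex domain a function with everywhere negative semidefinite Hessian need not be concave, two critical points need not differ by an element of the kernel, and a critical point need not be a global maximum. This is precisely the difficulty the paper's actual proof avoids: it deduces the proposition from \thmref{theorem:realisation_euclidean}, whose variational principle uses the discrete Hilbert--Einstein functional $\HE_{\surf_g,\Theta}$ defined on \emph{all} of $\RR^{\verts}$ (heights of convex polyhedral cusps over the fundamental discrete conformal invariant), shown to be globally $C^2$ and concave by gluing the analytic pieces across the cells $\mathcal{P}_{\tri}(\surf_g)$ (\lemref{lemma:well_definedness_he_functional}, \lemref{lemma:euclidean_he_functional_total_diff}, \lemref{lemma:euclidean_he_functional_hessian}). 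On the convex domain $\RR^{\verts}$ your concavity argument then does close.

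A secondary, fixable gap: you claim the kernel of the Hessian is spanned by $\mathbf{1}$ ``as soon as the surface is connected.'' Connectivity of the surface is not enough, because on the boundary of $\mathcal{C}_{\tri}(\dist_{\eucsurf_g},r)$ some cotangent weights vanish ($\cotw_{ij}=0$ exactly when $\alpha_{ij}^k+\alpha_{ij}^l=\pi$), and the quadratic form \eqref{eq:explicit_formula_jacobian} only constrains differences $\diff{}{u_j}-\diff{}{u_i}$ along edges with strictly positive weight. You need the paper's observation that these edges are precisely the edges of the underlying weighted Delaunay \emph{tessellation}, whose $1$-skeleton is connected because it is a cell decomposition of $\eucsurf_g$ (see the end of the proof of \lemref{lemma:euclidean_he_functional_hessian}). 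Without that step, strict concavity on the slice $\{\sum_i u_i=0\}$ is not established.
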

\begin{proof}
   This follows from \thmref{theorem:realisation_euclidean}. Moreover,
   we also have existence if we allow the combinatorics to change and $\Theta$
   satisfies the Gau{\ss}-Bonnet condition
   (\teqref{eq:euclidean_gauss_bonnet_condition}).
\end{proof}

\begin{corollary}
   If a solution to \probref{problem:uniformization} (or
   \probref{problem:prescirbed_boundary}) exists, then it is unique up to scale
   and can be computed maximizing a concave functional.
\end{corollary}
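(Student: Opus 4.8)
The plan is to reduce both special problems to \probref{problem:prescribed_angle_sums} and then apply \propref{prop:solution_to_conformal_mapping_problems}. For \probref{problem:uniformization} this reduction is immediate: the uniform distribution \eqref{eq:euclidean_constant_angle_vector} is one particular choice of target $\Theta\in\RR_{>0}^{\verts}$, so a solution of \probref{problem:uniformization} is precisely a solution of \probref{problem:prescribed_angle_sums} for this $\Theta$. Assuming existence, the uniqueness up to scale and the description as the maximizer of the discrete Hilbert--Einstein functional $\HE_{\surf_g,\Theta}$ carry over verbatim from \propref{prop:solution_to_conformal_mapping_problems}.

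The case of \probref{problem:prescirbed_boundary} is handled via the geometric double of \remref{remark:geometric_double}. First I would double the decorated PE-surface $(\dist_{\mathbb{D}}, r)$ along $\partial\mathbb{D}$ into a closed genus-$0$ decorated PE-surface $\widehat{\eucsurf}$ equipped with the reflection $\sigma$ that fixes the boundary vertices and swaps the two copies of each interior vertex. By assumption the given triangulation is the restriction of a weighted Delaunay triangulation of $\widehat{\eucsurf}$, so the doubled data is admissible for \probref{problem:prescribed_angle_sums}. Requiring that the metric on $\mathbb{D}$ be planar with boundary angles $\Theta_i$ is equivalent, on $\widehat{\eucsurf}$, to the $\sigma$-symmetric target $\widehat{\Theta}$ that equals $2\pi$ at each off-mirror (interior) vertex and $2\Theta_i$ at each mirror (boundary) vertex $i$; the factor two records that a boundary vertex meets the prescribed angle on both sides of $\sigma$.

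The key step is a symmetrization argument identifying the solutions of \probref{problem:prescirbed_boundary} on $\mathbb{D}$ with the solutions of the doubled \probref{problem:prescribed_angle_sums} on $\widehat{\eucsurf}$. A solution on $\mathbb{D}$ extends by reflection to a $\sigma$-symmetric solution on $\widehat{\eucsurf}$. Conversely, every solution $\hat u$ on $\widehat{\eucsurf}$ is automatically $\sigma$-symmetric: since both the decorated metric and the target $\widehat{\Theta}$ are $\sigma$-invariant, $\sigma^{*}\hat u$ is again a solution, so \propref{prop:solution_to_conformal_mapping_problems} gives that $\sigma^{*}\hat u$ and $\hat u$ differ by a global additive constant $c$; applying $\sigma^{*}$ once more and using that $\sigma$ is an involution forces $c=0$, whence $\sigma^{*}\hat u=\hat u$ and $\hat u$ descends to $\mathbb{D}$. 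This bijection transports uniqueness up to scale back to $\mathbb{D}$, and the restriction of the discrete Hilbert--Einstein functional of the doubled problem to the $\sigma$-invariant subspace of logarithmic scale factors is a concave function whose maximizer computes the sought solution.

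I expect the main obstacle to be the bookkeeping in the symmetrization step: one must verify that $\sigma$ acts on the logarithmic scale factors by the induced permutation of the vertices of $\widehat{\eucsurf}$, that $\widehat{\Theta}$ is genuinely $\sigma$-invariant and encodes planarity together with the boundary angles with the correct factor of two, and that the $\sigma$-invariant subspace is exactly the locus of reflection-symmetric decorated metrics, so that maximizing over it is legitimate. Once this is in place, concavity and uniqueness are inherited from \propref{prop:solution_to_conformal_mapping_problems}.
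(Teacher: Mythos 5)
Your proposal is correct and follows the route the paper intends: the corollary is stated without proof as an immediate specialization of \propref{prop:solution_to_conformal_mapping_problems}, with the boundary-value case handled by the geometric double exactly as sketched in \remref{remark:geometric_double} and built into the hypotheses of \probref{problem:prescirbed_boundary}. Your symmetrization argument (using uniqueness up to scale and the involutivity of the reflection to force the additive constant to vanish) simply makes explicit the bookkeeping the paper leaves implicit.
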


\begin{remark}
   In applications the PE-surface under consideration comes usually from
   3D-data which also prescribes some triangulation. This triangulation will
   in general be not a weighted Delaunay triangulation. But we can use the
   flip algorithm (\propref{prop:euclidean_flip_algoritm} item
   \ref{item:euclidean_flip_algorithm}) to find one. Then, after solving
   \probref{problem:prescribed_angle_sums}, we can transfer the solution
   to the PE-surface with the initial combinatorics by interpolating on the
   common refinement of the two triangulations (see \cite[Sec.~6]{GSC2021}
   for more details).
\end{remark}

\subsection{A combinatorial curvature flow}
\label{sec:combinatorial_curvature_flow}
To find solutions to \probref{problem:prescribed_angle_sums} we can try to
evolution the logarithmic scale factors
$u\colon[0,T)\to\RR^{\verts}$, $T\in\RR_{>0}\cup\{\infty\}$,
according to the initial value problem
\begin{equation}\label{eq:decorated_flow}
   \begin{cases}
      \tdiff{t}u_i(t) &= -\left(\Theta_i-\theta_i(t)\right),\\
      u_i(0) &= 0.
   \end{cases}
\end{equation}
We call this the \emph{decorated $\Theta$-flow}. Here, $\theta_i(t)$ is the
angle-sum about the vertex $i\in\verts$ in the decorated PE-metric induced by
$u(t)$ using the equations given in \propref{prop:conformal_change}.
We collect in this section some known local properties of this flow.

\begin{remark}
   In the literature this flow is commonly considered for $\Theta_i=2\pi$ for
   all $i\in\verts$ under the name \emph{combinatorial Ricci/Yamabe-flow}. It was
   first systematically studied by \textsc{B.\ Chow} and \textsc{F.\ Luo} \cite{CL2003}
   for intersecting circle patterns (Ricci-flow) and \textsc{F.\ Luo} \cite{Luo2004}
   for \enquote{undecorated} PE-surfaces (Yamabe-flow).
\end{remark}

\begin{lemma}
   Given a triangulated decorated PE-surface $(\tri, \len, r)$. Under conformal
   discrete change of decorated PE-metrics the angles $\theta_i$ in a triangle
   $ijk\in\faces_{\tri}$ vary by
   \begin{equation}\label{eq:euclidean_angle_derivative}
      \diff{}{\theta_{jk}^i}
      \;=\;
      \frac{r_{ij}\cot\alpha_{ij}^k}{\len_{ij}}\,(\diff{}{u_j}-\diff{}{u_i})
      \,+\, \frac{r_{ik}\cot\alpha_{ik}^j}{\len_{ik}}\,(\diff{}{u_k}-\diff{}{u_i}).
   \end{equation}
\end{lemma}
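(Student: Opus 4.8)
The plan is to differentiate the law of cosines along the conformal change, feeding in the explicit dependence of the squared edge lengths on $u$ from \eqref{eq:conformal_change_l}. Since two successive conformal changes add their scale factors (the change of decorated metric is the scaling $p\mapsto\ee^{u}p$ of the Minkowski lifts, \secref{sec:discrete_conformal_maps}), it suffices to compute the differential at the base point and to express the answer in terms of the current decorated metric $(\len,r)$. Differentiating \eqref{eq:conformal_change_l} gives
\[
   \diff{}{(\len_{ij}^2)}
   \;=\;
   (\len_{ij}^2+r_i^2-r_j^2)\,\diff{}{u_i}
   \,+\,(\len_{ij}^2+r_j^2-r_i^2)\,\diff{}{u_j},
\]
with $\partial(\len_{ij}^2)/\partial u_k=0$. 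The crucial observation is that $h_{ij}:=(\len_{ij}^2+r_i^2-r_j^2)/(2\len_{ij})$ is exactly the signed distance from $i$ to the orthogonal projection of the face-circle center $O_{ijk}$ onto $ij$ (equivalently, the foot of the radical axis of the vertex-circles at $i$ and $j$), and $h_{ij}+h_{ji}=\len_{ij}$. Hence $\diff{}{(\len_{ij}^2)}=2\len_{ij}\big(h_{ij}\,\diff{}{u_i}+h_{ji}\,\diff{}{u_j}\big)$.

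Next I would combine this with the standard derivatives of the angle with respect to the squared edge lengths,
\[
   \frac{\partial\theta_{jk}^i}{\partial(\len_{jk}^2)}=\frac{1}{4\mathcal{A}_{ijk}},
   \qquad
   \frac{\partial\theta_{jk}^i}{\partial(\len_{ij}^2)}
      =-\frac{\len_{jk}\cos\theta_{ki}^j}{4\len_{ij}\mathcal{A}_{ijk}},
\]
where $\mathcal{A}_{ijk}$ is the triangle area, obtained by differentiating $\cos\theta_{jk}^i=(\len_{ij}^2+\len_{ik}^2-\len_{jk}^2)/(2\len_{ij}\len_{ik})$ and using $2\mathcal{A}_{ijk}=\len_{ij}\len_{ik}\sin\theta_{jk}^i$. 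Applying the chain rule and collecting the coefficient of $\diff{}{u_j}$ (only $\len_{jk}^2$ and $\len_{ij}^2$ depend on $u_j$) yields $\tfrac{\len_{jk}}{2\mathcal{A}_{ijk}}\big(h_{jk}-h_{ji}\cos\theta_{ki}^j\big)$, and analogously for $\diff{}{u_k}$.

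The main work — and the step I expect to be the real obstacle — is recognizing that these expressions collapse to the cotangent weights $d_{ij}^k/\len_{ij}$. This rests on the planar identity obtained by projecting $O_{ijk}$ onto the two edges meeting at $j$: since $h_{jk}=h_{ji}\cos\theta_{ki}^j+d_{ij}^k\sin\theta_{ki}^j$, together with $2\mathcal{A}_{ijk}=\len_{ij}\len_{jk}\sin\theta_{ki}^j$, the coefficient of $\diff{}{u_j}$ becomes $d_{ij}^k/\len_{ij}=r_{ij}\cot\alpha_{ij}^k/\len_{ij}$ by \eqref{eq:distances_by_cotan}; the same argument at $k$ handles $\diff{}{u_k}$. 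Finally the coefficient of $\diff{}{u_i}$ must equal $-(d_{ij}^k/\len_{ij}+d_{ik}^j/\len_{ik})$; rather than computing it separately, I would deduce this consistency from the triangle projection formula $\len_{jk}=\len_{ij}\cos\theta_{ki}^j+\len_{ik}\cos\theta_{ij}^k$, which simultaneously serves as a correctness check. The only remaining subtlety is tracking the orientation convention for the signed distances $d_{ij}^k$ so that the signs in the projection identities match those fixed after \eqref{eq:distances_by_cotan}.
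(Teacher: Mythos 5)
Your argument is correct, and it takes a genuinely different route from the paper: the paper disposes of this lemma by citing \cite{GT2017} (Thm.~5) and translating notation via \teqref{eq:distances_by_cotan}, whereas you give a self-contained derivation. The steps all check out: differentiating \teqref{eq:conformal_change_l} at the base point gives $\diff{}{(\len_{ij}^2)}=(\len_{ij}^2+r_i^2-r_j^2)\,\diff{}{u_i}+(\len_{ij}^2+r_j^2-r_i^2)\,\diff{}{u_j}$, and the group property of the conformal change legitimately reduces the general case to the base point; your quantity $(\len_{ij}^2+r_i^2-r_j^2)/(2\len_{ij})$ is indeed the distance from $i$ to the intersection of edge $ij$ with the radical axis of the vertex-circles at $i$ and $j$, which is the foot of the perpendicular from the face-circle center because that center is the radical center of the three vertex-circles; the projection identity at $j$ then collapses the coefficient of $\diff{}{u_j}$ to $d_{ij}^k/\len_{ij}=r_{ij}\cot\alpha_{ij}^k/\len_{ij}$ via $2\mathcal{A}_{ijk}=\len_{ij}\len_{jk}\sin\theta_{ki}^j$; and your consistency argument for the coefficient of $\diff{}{u_i}$ does close up (combine $\len_{jk}=\len_{ij}\cos\theta_{ki}^j+\len_{ik}\cos\theta_{ij}^k$ with the splitting of each edge length into the two radical-axis segments and the projection identities at $j$ and $k$) --- alternatively it follows at once from scale invariance, since $u_i=u_j=u_k$ scales the decorated triangle uniformly, so the three coefficients must sum to zero. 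Two cosmetic points: your symbol $h_{ij}$ collides with the paper's later use of $h_{ij}$ for the height of an edge over the auxiliary horosphere in \secref{sec:hyperideal_polyhedral_cusps} and should be renamed, and the sign convention you flag is exactly the one fixed after \teqref{eq:distances_by_cotan}. What the citation buys the paper is brevity and a pointer to the general duality-structure framework; what your computation buys is an elementary in-notation proof that makes visible why the radical center is the geometric carrier of the weights $d_{ij}^k/\len_{ij}$.
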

\begin{proof}
   This is a reformulation of \cite[Thm.~5]{GT2017} using
   \teqref{eq:distances_by_cotan}.
   Note that our notation is potentially confusing when compared to \cite{GT2017}:
   our $d_{ij}^k$ are their $h_{ij}$. Furthermore, we will assign a different
   meaning to $h_{ij}$ in \secref{sec:hyperideal_polyhedral_cusps}.
\end{proof}

For a triangulated decorated PE-surface $(\tri, \len, r)$ we define
the \emph{decorated cotan-weights} $\cotw\colon\edges_{\tri}\to\RR$ by
\begin{equation}
   \cotw_{ij}
   \;\coloneqq\;
   \cot\alpha_{ij}^k
   \,+\,
   \cot\alpha_{ij}^l,
\end{equation}
where $ijk$ and $ilj$ are the triangles adjacent to the edge $ij$.

\begin{lemma}\label{lemma:angle_jacobian_formula}
   Given a weighted Delaunay triangulation $\tri$ \wrt\ the decoration $r$
   of the PE-surface $(\eucsurf_g, \verts, \dist_{\eucsurf_g})$. Then
   $\sum\diff{}{\theta_i}\diff{}{u_i}$ is a negative
   semi-definite quadratic form. In particular, it is given by
   \begin{equation}\label{eq:explicit_formula_jacobian}
      -\frac{1}{2}\sum_{ij}
         \cotw_{ij}\frac{r_{ij}}{\len_{ij}}\,
            \big(\diff{}{u_j}-\diff{}{u_i}\big)^2.
   \end{equation}
\end{lemma}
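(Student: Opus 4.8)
The plan is to sum the single-corner variation formula \eqref{eq:euclidean_angle_derivative} over the whole surface and exploit the symmetry of its coefficients. Writing the cone angle as $\theta_i=\sum_{ijk\ni i}\theta_{jk}^i$, the form $\sum_i\diff{}{\theta_i}\diff{}{u_i}$ reorganizes into a sum over faces, each triangle $ijk$ contributing $\diff{}{\theta_{jk}^i}\diff{}{u_i}+\diff{}{\theta_{ki}^j}\diff{}{u_j}+\diff{}{\theta_{ij}^k}\diff{}{u_k}$. I abbreviate the edge coefficient appearing in \eqref{eq:euclidean_angle_derivative} by $A_{ij}^k\coloneqq r_{ij}\cot\alpha_{ij}^k/\len_{ij}$; it is manifestly symmetric under $i\leftrightarrow j$, and each corner derivative is the sum of two such terms, one per edge incident to that corner.

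I then collect the terms of a single face edge by edge. The two contributions attached to the edge $ij$ are $A_{ij}^k\big(\diff{}{u_i}(\diff{}{u_j}-\diff{}{u_i})+\diff{}{u_j}(\diff{}{u_i}-\diff{}{u_j})\big)$, which collapses to $-A_{ij}^k(\diff{}{u_i}-\diff{}{u_j})^2$, and likewise for the edges $jk$ and $ki$. Thus each face contributes minus the sum of $A_e(\cdot)^2$ over its three edges. Summing over all faces, each interior edge $ij$ is met by exactly its two adjacent triangles $ijk$ and $ilj$, and since $r_{ij}$ and $\len_{ij}$ depend only on the edge and not on the face, the coefficients combine through $\cot\alpha_{ij}^k+\cot\alpha_{ij}^l=\cotw_{ij}$. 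This yields \eqref{eq:explicit_formula_jacobian}, the factor $\tfrac12$ reflecting that the sum runs over ordered pairs, so that each edge is counted on both of its sides.

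For negative semi-definiteness everything reduces to the sign of the coefficients $\cotw_{ij}\,r_{ij}/\len_{ij}$, and this is the step I expect to carry the real content. Since $\len_{ij}>0$, I argue first that $r_{ij}>0$: the decoration being hyperideal \eqref{eq:hyperideal_decoration_condition}, the vertex-circles at $i$ and $j$ are disjoint, so the coaxial pencil of circles orthogonal to both has two real base points, and $r_{ij}$ is half their distance. Second, that $\cotw_{ij}\geq0$: because $\tri$ is weighted Delaunay, item \ref{item:euclidean_local_proper_angles} of \lemref{lemma:local_characterization_euc_wdt} gives $\alpha_{ij}^k+\alpha_{ij}^l\leq\pi$, so \eqref{eq:alternative_cotan_weight_formula} writes $\cotw_{ij}$ as a ratio with nonnegative numerator and positive denominator. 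Each summand of \eqref{eq:explicit_formula_jacobian} is then a nonpositive multiple of a square, and the form is negative semi-definite. The only remaining point needing care is the bookkeeping for non-simplicial triangulations, where an edge may bound the same triangle on both sides; tracking the corner-contributions per oriented edge, exactly as the ordered-pair sum already does, keeps the derivation valid without modification.
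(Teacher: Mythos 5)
Your proposal is correct and follows essentially the same route as the paper: sum the corner-variation formula \eqref{eq:euclidean_angle_derivative} over all corners, regroup by edges using the symmetry of the coefficients $r_{ij}\cot\alpha_{ij}^k/\len_{ij}$, and deduce semi-definiteness from $\cotw_{ij}\geq0$ via \lemref{lemma:local_characterization_euc_wdt} and \eqref{eq:alternative_cotan_weight_formula}. Your additional remarks on the positivity of $r_{ij}$ and on the bookkeeping for non-simplicial triangulations are correct details the paper leaves implicit.
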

\begin{proof}
   Formula \teqref{eq:explicit_formula_jacobian} follows from a straightforward
   computation: first summing over all corners, using
   \teqref{eq:euclidean_angle_derivative}, then regrouping by edges.
   Since $\tri$ is a weighted Delaunay triangulation,
   \lemref{lemma:local_characterization_euc_wdt} and
   \teqref{eq:alternative_cotan_weight_formula} show that $\cotw_{ij}\geq0$ for
   all $ij\in\edges_{\tri}$. Hence, the semi-definiteness is a consequence of
   formula \eqref{eq:explicit_formula_jacobian}. Note that the kernel of
   $\sum\diff{}{\theta_i}\diff{}{u_i}$ is at least 1-dimensional. It always contains
   the space spanned by the constant vector $\bm{1}_{\verts}\in\RR^{\verts}$,
   which corresponds to the scale-invariance of the angles.
\end{proof}

\begin{remark}
   In the context of discrete differential geometry
   $\diff{}{\theta}=(\diff{}{\theta_i})_{i\in\verts}$ is known as a
   \emph{discrete Laplace-Beltrami operator} and $\sum\diff{}{\theta_i}\diff{}{u_i}$
   is its associated \emph{discrete Dirichlet energy}
   (see, \eg, \cite[\S6]{Glickenstein2011}). If $r_i=0$ for all $i\in\verts$, then
   $2r_{ij} = \len_{ij}$ for all edges $ij$. So in this case
   $\diff{}{\theta}$ is the well-known \emph{cotan-Laplace operator} (up to a factor)
   \cite{PP1993}.
\end{remark}

\begin{proposition}
   Consider a decorated PE-metric $(\dist_{\eucsurf_g}, r)$ on the marked surface
   $(\eucsurf_g, \verts)$. Let $\tri$ be a weighted Delaunay triangulation \wrt\ $r$.
   The decorated $\Theta$-flow \eqref{eq:decorated_flow} is (locally) the gradient
   flow of a concave functional.
\end{proposition}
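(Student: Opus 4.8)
The plan is to exhibit a concave function $\mathcal{F}$ on the weighted-Delaunay cell $\mathcal{C}_{\tri}(\dist_{\eucsurf_g}, r)$ whose gradient is the right-hand side of the flow \eqref{eq:decorated_flow}. Writing $\theta(u) = (\theta_i(u))_{i\in\verts}$ for the angle-sums of the decorated metric induced by $u$ (via \propref{prop:conformal_change}), the flow reads $\tdiff{t}u = \theta(u) - \Theta$. On the interior of $\mathcal{C}_{\tri}$ the triangulation $\tri$ stays weighted Delaunay, so $u \mapsto \theta(u)$ is smooth and the vector field $V(u) \coloneqq \theta(u) - \Theta$ is well-defined and smooth; it is precisely for this reason that the statement is only local, i.e.\ valid as long as no edge-flip occurs. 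Thus it suffices to find $\mathcal{F}$ with $\nabla \mathcal{F} = V$ on this domain and to verify that $\mathcal{F}$ is concave.

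First I would record that $V$ is closed, indeed exact. Since $\Theta$ is constant, the Jacobian of $V$ equals the Jacobian $\bigl(\partial \theta_i / \partial u_j\bigr)_{i,j}$ of the angle map. By \lemref{lemma:angle_jacobian_formula} this Jacobian is the symmetric matrix representing the quadratic form \eqref{eq:explicit_formula_jacobian}; in particular $\partial\theta_i/\partial u_j = \partial\theta_j/\partial u_i$, so the $1$-form $\omega = \sum_{i\in\verts}(\theta_i - \Theta_i)\,\diff{}{u_i}$ is closed. As the interior of $\mathcal{C}_{\tri}$ is homeomorphic to $\RR^{\verts}$ (\propref{prop:space_of_log_factors}), hence simply connected, the Poincar\'e lemma provides a primitive $\mathcal{F}$ with $\diff{}{\mathcal{F}} = \omega$; explicitly one may take the line integral $\mathcal{F}(u) = \int_0^1 \sum_{i\in\verts}(\theta_i(su) - \Theta_i)\,u_i\,\diff{}{s}$. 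By construction $\partial\mathcal{F}/\partial u_i = \theta_i - \Theta_i$, so $\nabla\mathcal{F} = V$ and the flow \eqref{eq:decorated_flow} is the gradient-ascent flow of $\mathcal{F}$.

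Concavity is then immediate: the Hessian of $\mathcal{F}$ equals the Jacobian of $V$, namely the Jacobian of $\theta$, which by \lemref{lemma:angle_jacobian_formula} is negative semi-definite (its kernel containing $\bm{1}_{\verts}$, reflecting scale-invariance). A twice-differentiable function with negative semi-definite Hessian on a convex domain is concave, so $\mathcal{F}$ is concave. This $\mathcal{F}$ is, up to the linear term $\sum_{i\in\verts} \Theta_i u_i$, the restriction to $\mathcal{C}_{\tri}$ of the discrete Hilbert--Einstein functional $\HE_{\surf_g,\Theta}$ of \secref{sec:the_variational_principle}.

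The main obstacle is essentially dispatched by \lemref{lemma:angle_jacobian_formula}: the symmetry of the angle Jacobian (closedness of $\omega$) and its semi-definiteness (concavity of $\mathcal{F}$) are exactly the two inputs that lemma supplies, and the remainder is the standard Poincar\'e-lemma construction of a potential. The only point demanding care is the domain, since the potential and its concavity are guaranteed only on a single weighted-Delaunay cell $\mathcal{C}_{\tri}(\dist_{\eucsurf_g}, r)$ where $\theta(u)$ varies smoothly; across a flip the combinatorics, and hence the functional $\mathcal{F}$, change, which is why the gradient-flow description is asserted only locally.
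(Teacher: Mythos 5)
Your proposal is correct and follows essentially the same route as the paper, whose proof is a one-line citation of \lemref{lemma:angle_jacobian_formula} (symmetry and negative semi-definiteness of the angle Jacobian) together with \propref{prop:space_of_log_factors} (the cell interior is homeomorphic to $\RR^{\verts}$, hence simply connected); you have merely spelled out the Poincar\'e-lemma construction of the potential that these two facts make possible. The only caveat is that the interior of $\mathcal{C}_{\tri}$ need not be convex in the $u$-coordinates, so the negative semi-definite Hessian yields only local concavity --- which is all the proposition asserts.
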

\begin{proof}
   Locally this follows from \lemref{lemma:angle_jacobian_formula} and
   \propref{prop:space_of_log_factors}. The global result, including an explicite
   formula for the functional, is given in
   \propref{prop:euclidean_he_functional_properties}.
\end{proof}

\section{Interpretation in terms of hyperbolic geometry}
\subsection{Circle patterns at the ideal boundary of hyperbolic space}
\label{sec:lift_to_hyperbolic_space}
Consider a single decorated triangle $ijk$. If we interpret the disk bounded
by the face-circle $C_{ijk}$ as the hyperbolic plane in the Klein-model, then
the triangle $ijk$ becomes a \emph{hyperideal triangle}. The vertices outside the
hyperbolic plane are called \emph{hyperideal} $(r_i>0)$ and those on the
circle $C_{ijk}$ are \emph{ideal vertices} $(r_i=0)$. This equips each (abstract)
triangle in a triangulated decorated PE-surface $(\tri, \len, r)$ with a
hyperbolic metric which fit together along the edges. Thus, we obtain a complete
hyperbolic surface $\surf_g$ with ends homeomorphic to $\eucsurf_g\setminus\verts$
equipped with a geodesic triangulation combinatorially equivalent to $\tri$. The
ends of finite area (\emph{cusps}) and infinite area (\emph{flares}) of $\surf_g$
correspond to ideal and hyperideal vertices, respectively.

\begin{definition}[fundamental discrete conformal invariant]
   \label{def:fundamental_discrete_conformal_invariant}
   Given a triangulated decorated PE-surface $(\tri, \len, r)$. The complete
   hyperbolic surface $\surf_g$ constructed above is called the
   \emph{fundamental discrete conformal invariant} of $(\tri, \len, r)$.
\end{definition}

To see that this construction is indeed invariant under discrete conformal
equivalence, we consider another way to associate a hyperideal triangle to $ijk$.
The extended complex plane $\CC\cup\{\infty\}$ is the boundary at infinity
of hyperbolic $3$-space (\emph{ideal boundary}) in the upper half-space model,
$\HH^3=\{(x,y,z)\in\RR^3:z>0\}$. To each circle $C$ in $\CC$ we associate the
half-sphere in $\HH^3$ intersecting $\CC$ in $C$ orthogonally. Similarly, we can
associate a half-plane to a line in $\CC$.
These half-spheres and half-planes are the totally geodesic planes in $\HH^3$.
Now, the hyperbolic planes over the edges of $ijk$ together with the plane over
$C_{ijk}$ bound a \emph{hyperideal tetrahedron} with one distinguished ideal vertex
at $\infty$. It is also called a \emph{hyperideal horoprism} to emphasise that
we distinguished an ideal vertex (see \figref{fig:lift_of_decorated_triangles}).
The \emph{lower face} of this tetrahedron, \ie, the face which is not incident to
$\infty$, is a hyperideal triangle. It is related to the previously considered
hyperideal triangle via orthographic projection. This is the well-known relationship
between the Klein-model and the upper half-sphere model of the hyperbolic plane
(see, \eg, \cite{CFK+1997}). So both constructions induce the same hyperbolic
metric on the triangle $ijk$, and thus on $\eucsurf_g\setminus\verts$.

\begin{figure}[t]
   \centering
   \labellist
   \small\hair 2pt
   \pinlabel $\infty$ at 302 558
   \pinlabel $i$ at 150 135
   \pinlabel $j$ at 410 60
   \pinlabel $\theta_{ij}^k$ at 275 430
   \pinlabel $\theta_{jk}^i$ at 85 380
   \pinlabel $\theta_{ki}^j$ at 480 330
   \endlabellist
   \includegraphics[width=0.6\textwidth]{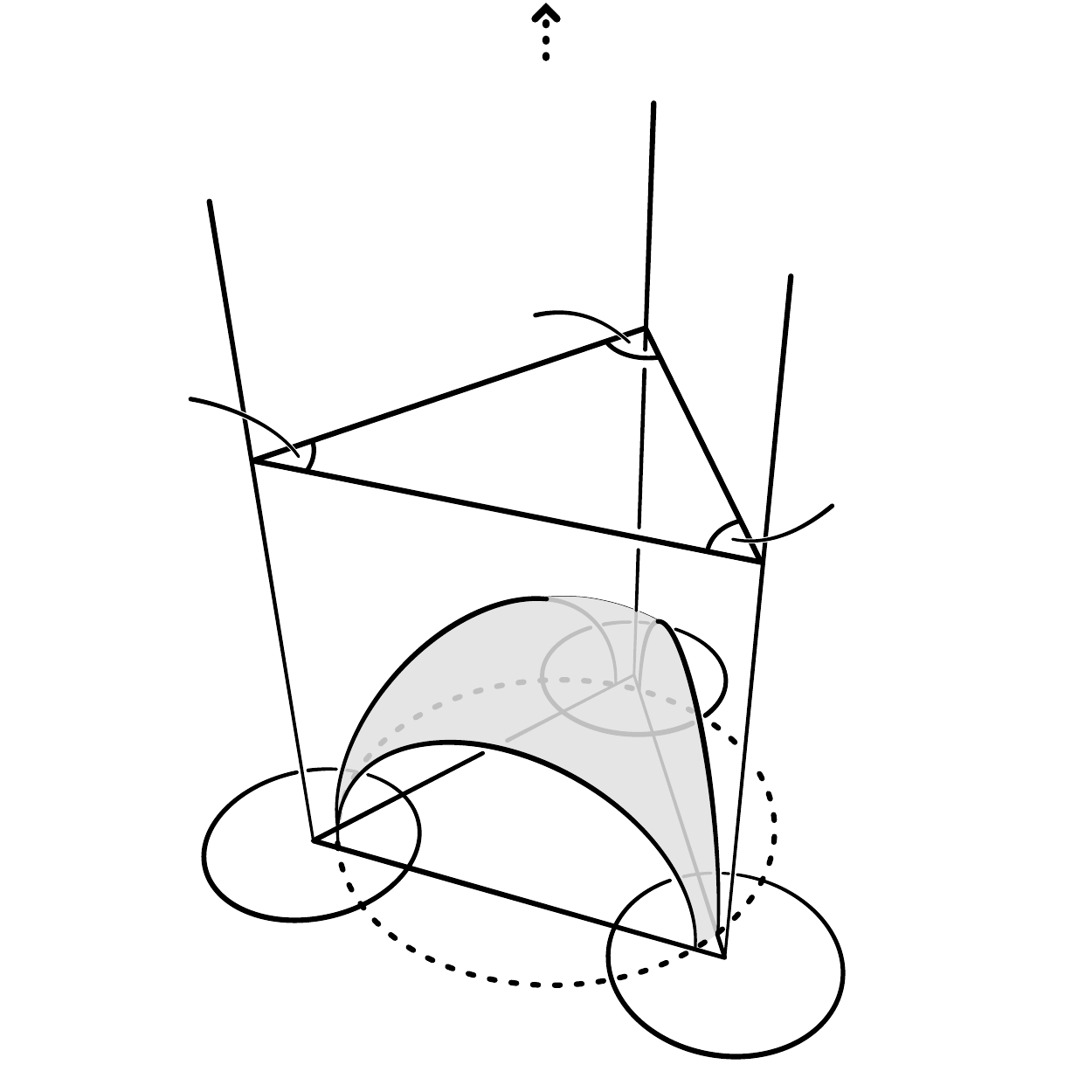}
   \caption{%
      Sketch of a hyperideal horoprism over the decorated euclidean triangle $ijk$.
      The hyperideal triangle giving the lower face of the horoprism is shaded.
      Note that the dihedral angles at the vertical edges coincide with the
      interior angles $\theta_{ij}^k$ of the euclidean triangle.
   }
   \label{fig:lift_of_decorated_triangles}
\end{figure}

As M\"obius-transformations act on circles in $\CC\cup\{\infty\}$, they act on
hyperbolic planes. This gives a one-to-one correspondence to isometries of
hyperbolic $3$-space, known as \emph{Poincar\'e-extension}
(see, \eg, \cite[\S~4.4]{Ratcliffe1994}). Since discrete conformal maps act by
M\"obius-transformations on each triangle $ijk$
(\defref{def:dce_via_moebius}) we arrive at

\begin{proposition}\label{prop:fixed_combi_dce_via_hyperbolic}
   Two triangulated decorated PE-surfaces with the same triangulation $\tri$ are
   discrete conformally equivalent if and only if they induce the same
   fundamental discrete conformal invariant.
\end{proposition}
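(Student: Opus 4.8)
The plan is to prove both implications triangle-by-triangle, using the Poincar\'e extension as the bridge between the M\"obius transformations of \defref{def:dce_via_moebius} and the isometries of $\HH^3$ that compare the induced hyperbolic structures. The starting observation is that, once a common orthogonal face-circle $C_{ijk}$ is fixed, a decorated euclidean triangle is encoded \emph{without loss of information} by its associated hyperideal triangle: the hyperideal vertices are the exterior points (the centres of the vertex-circles) of the plane over $C_{ijk}$, and the truncating plane at each such vertex---equivalently, the vertex-circle itself---is the unique plane orthogonal to the lower face along the polar geodesic of that hyperideal vertex. Hence the vertex-circles are recovered intrinsically from the hyperideal triangle, and the full decorated datum up to M\"obius transformation corresponds exactly to the hyperideal triangle up to isometry.

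For the forward implication, suppose $(\tri, \len, r)$ and $(\tri, \tilde{\len}, \tilde{r})$ are discrete conformally equivalent, so that each face $ijk\in\faces$ carries a M\"obius transformation $M_{ijk}$ mapping one decorated triangle onto the other. Applying Poincar\'e extension turns $M_{ijk}$ into an isometry of $\HH^3$. Since $M_{ijk}$ sends vertex-circles to vertex-circles, it sends their common orthogonal circle $C_{ijk}$ to $\tilde{C}_{ijk}$, so its extension carries the horoprism of \figref{fig:lift_of_decorated_triangles} over the first triangle to the horoprism over the second and, in particular, maps the lower hyperideal triangle isometrically. Thus the hyperbolic metric induced on the abstract triangle $ijk$ is the same on both sides; as the triangulation $\tri$ and the edge gluings are shared, the two fundamental discrete conformal invariants $\surf_g$ and $\widetilde{\surf}_g$ coincide.

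For the converse, assume $\surf_g$ and $\widetilde{\surf}_g$ coincide, \ie\ there is an orientation-preserving isometry respecting $\tri$. Restricting to a face $ijk$ yields an isometry between the two corresponding hyperideal triangles. I would realise this isometry by an isometry $\Phi_{ijk}$ of $\HH^3$: an orientation-preserving isometry of a totally geodesic copy of $\HH^2\subset\HH^3$ extends to $\HH^3$ preserving that plane and, matching orientations, to an element of the orientation-preserving isometry group; it therefore sends the ideal boundary circle $C_{ijk}$ to $\tilde{C}_{ijk}$. By the encoding above, $\Phi_{ijk}$ also carries the polar geodesics, hence the truncating planes, to their counterparts, so the associated M\"obius transformation $M_{ijk}$ (the inverse of Poincar\'e extension) maps each vertex-circle to the corresponding vertex-circle. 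This is precisely the condition of \defref{def:dce_via_moebius}, and it holds compatibly across faces because the global isometry respects the edge gluings.

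The main obstacle is this converse direction, specifically the verification that an isometry comparing two hyperideal triangles automatically respects the \emph{decoration} and not merely the underlying hyperbolic metric. I expect the crux to be the intrinsic characterisation of the vertex-circles as the orthogonal truncations along the polar geodesics of the hyperideal vertices; once this is established, the facts that M\"obius transformations preserve orthogonality and the polar construction---equivalently, preserve the inversive distance \eqref{eq:def_inversive_distance}---make decoration-compatibility automatic, and the two constructions of the hyperideal triangle coincide as already noted.
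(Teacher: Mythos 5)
Your argument is correct and follows essentially the same route as the paper: the Poincar\'e extension identifies the M\"obius transformations of \defref{def:dce_via_moebius} with isometries of $\HH^3$ carrying one hyperideal horoprism (hence one lower hyperideal triangle) to the other. The only place you elaborate beyond the paper is the converse, where you argue synthetically that an isometry of hyperideal triangles must respect the decoration via the polar geodesics and their orthogonal truncating planes; the paper reaches the same conclusion implicitly through the facts that the inversive distances $I_{ij}$ are a complete M\"obius invariant of the triple of vertex-circles and that $I_{ij}=\cosh(\lambda_{ij})$, so the two are interchangeable.
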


Note that we do not transform the distinguished vertex at $\infty$. Thus,
only the intrinsic geometry of the lower faces of the hyperideal horoprisms
is preserved, \ie, the hyperbolic surface $\surf_g$. It is characterized by
the \emph{$\lambda$-lengths} of the geodesic triangulation $\tri$.
For each hyperideal triangle $ijk$ they are given by the \emph{(truncated) lengths}
$(\lambda_{ij}, \lambda_{jk}, \lambda_{ki})$ of its
edges. If both vertices of an edge $ij$ are hyperideal, then $\lambda_{ij}>0$ is the
distance between the hyperbolic planes associated to the vertices. In this case
$\lambda_{ij}$ is related to the inversive distance
(\teqref{eq:def_inversive_distance}) via
\begin{equation}\label{eq:inversive_distance_to_hyperbolic_lenght}
   I_{ij}\;=\;\cosh(\lambda_{ij})
\end{equation}
(see \cite[Sec.~1.2]{BH2003}). Should a vertex be ideal,
we equip it with an \emph{auxiliary horosphere}. The distance $\lambda_{ij}$ is now
taken with respect to this horosphere. It is positive if the horosphere is disjoint
with the hyperbolic plane, or horosphere, at the other vertex and negative otherwise
(see \figref{fig:lift_metric_quantities}, left).

\begin{figure}[t]
   \centering
   \labellist
   \small\hair 2pt
   \pinlabel $>0$ at 270 154
   \pinlabel $<0$ at 480 94
   \endlabellist
   \includegraphics[width=0.48\textwidth]{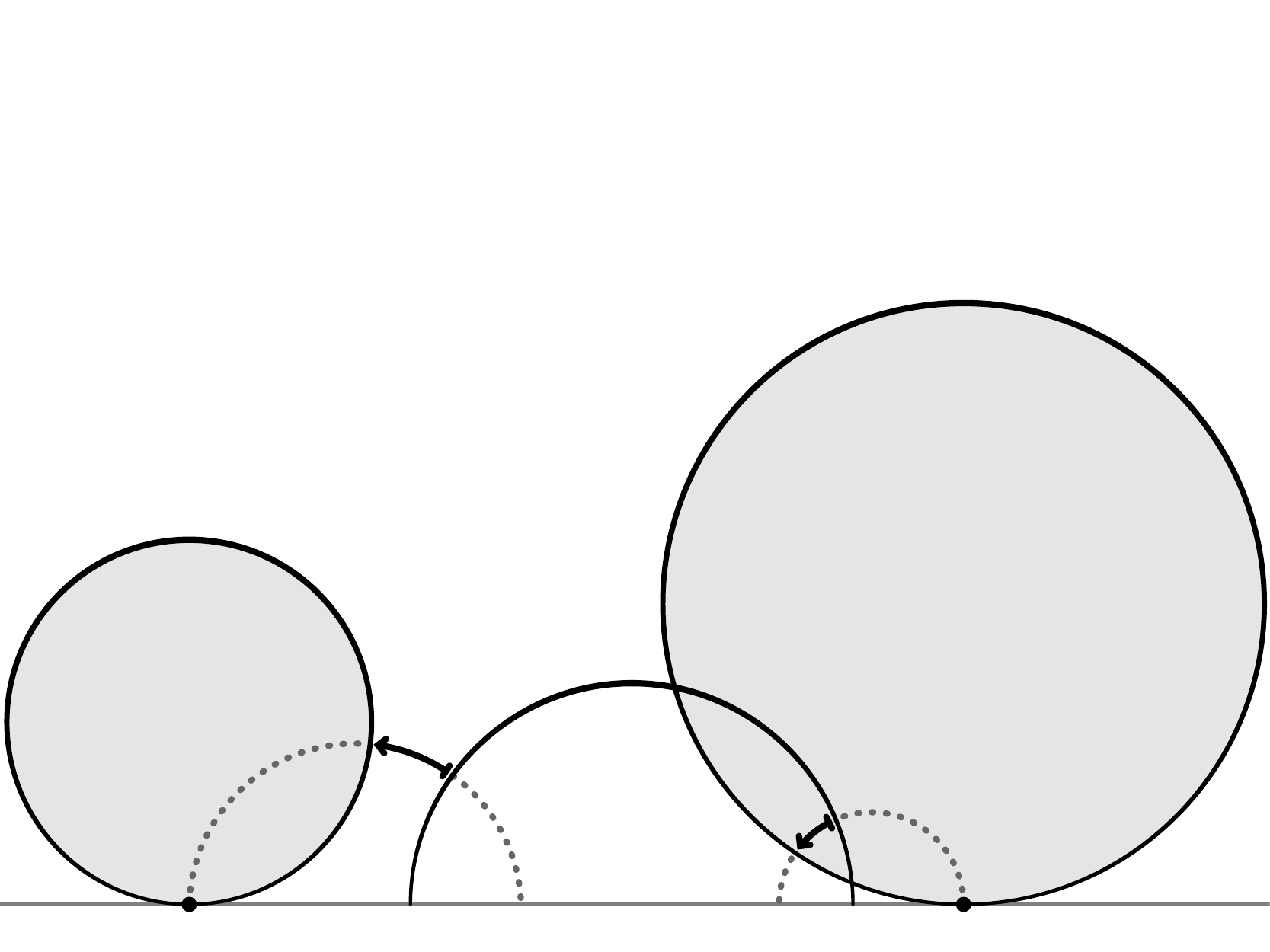}
   \qquad
   \labellist
   \small\hair 2pt
   \pinlabel $i$ at 160 50
   \pinlabel $j$ at 440 50
   \pinlabel $\infty$ at 300 545
   \pinlabel $h_i$ at 105 250
   \pinlabel $h_j$ at 495 250
   \pinlabel $h_{ij}$ at 335 250
   \pinlabel $\lambda_{ij}$ at 305 110
   \pinlabel $\len_{ij}$ at 305 390
   \endlabellist
   \includegraphics[width=0.4\textwidth]{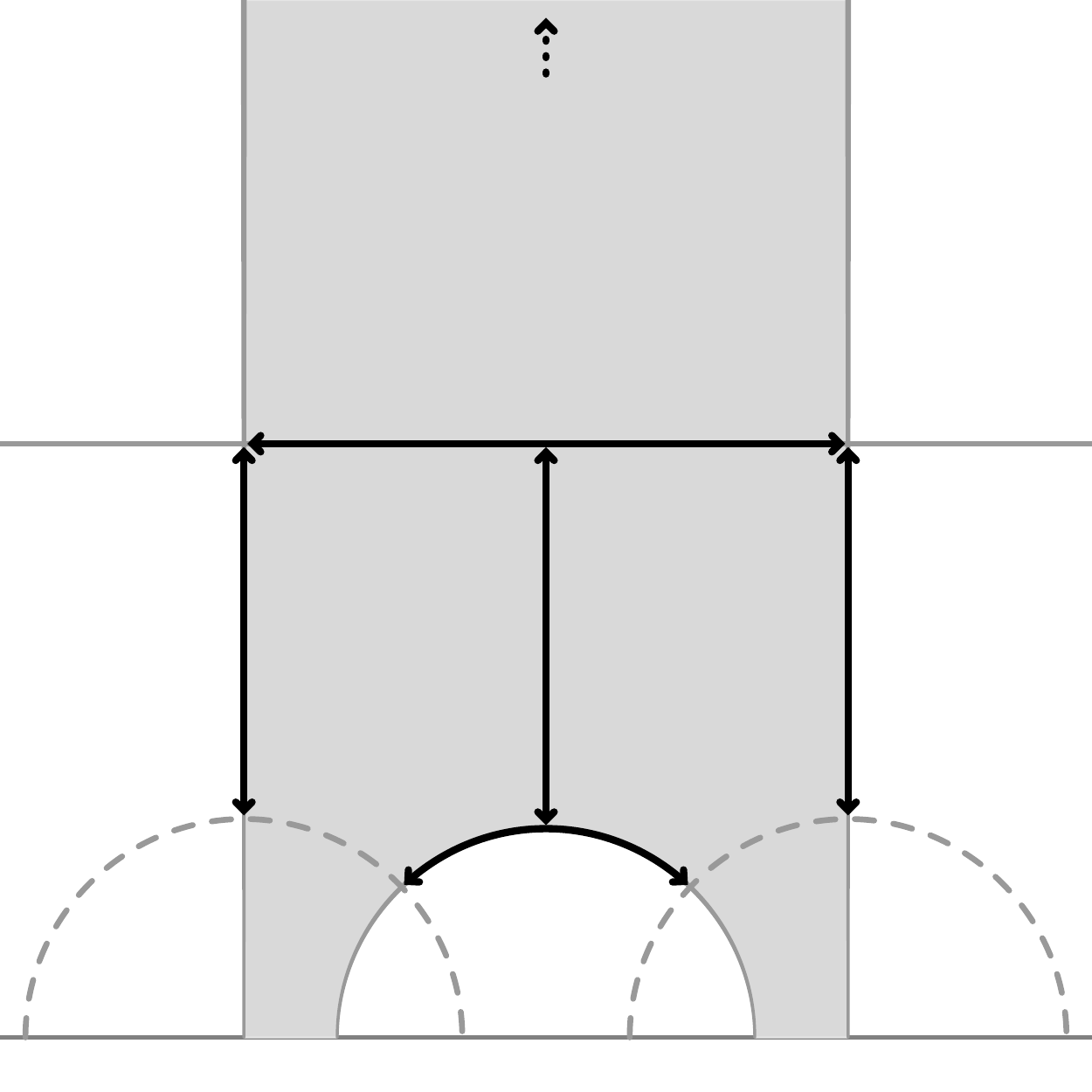}
   \caption{%
      \textsc{Left}: the hyperbolic distance between a horocycle and a hyperbolic
      line is defined to be positive if the horocycle and line do not intersect
      and negative otherwise.
      \textsc{Right}: notation for hyperideal triangles. Note that the heights
      $h_i$ depend on the horocycle a $\infty$ and can be negative according to
      convention.
   }
   \label{fig:lift_metric_quantities}
\end{figure}

\begin{remark}
   If all ends of the hyperbolic surface $\surf_g$ are cusps, that is,
   $r_i=0$ for all $i\in\verts$, then our $\lambda$-lengths are the
   \emph{logarithmic edge length} introduced in \cite{BPS2015}. They are closely
   related to \textsc{R.~Penner}'s \emph{lambda-lengths}
   $\sqrt{2}\ee^{\nicefrac{\lambda}{2}}=\sqrt{2}\len$ (see
   \teqref{eq:computing_hyperbolic_edge_lengths}) which parametrize the
   \emph{decorated Teichm\"uller space} of hyperbolic cusp surfaces \cite{Penner1987}.
\end{remark}

\subsection{Hyperideal polyhedral cusps}
\label{sec:hyperideal_polyhedral_cusps}
Though not invariant under discrete conformal maps, the collection of hyperideal
horoprisms carries still relevant information. We call it a
\emph{(hyperideal) polyhedral cusp} over the triangulated hyperbolic surface
$(\surf_g, \tri)$. It is homeomorphic to the cylinder
$[0,1)\times\left(\eucsurf_g\setminus\verts\right)$ and
endowed with a piecewise hyperbolic metric with conical singularities corresponding
the \emph{vertical edges} of the hyperideal horoprisms, \ie, the edges incident to
$\infty$. The boundary of the polyhedral cusp is isometric to $\surf_g$.
For a single hyperideal horoprism defined by a decorated euclidean triangle $ijk$
the dihedral angles at vertical edges are $\theta_{ij}^k$ (see
\figref{fig:lift_of_decorated_triangles}). Therefore, the cone-angles at the
singularities of the polyhedral cusp coincide with the cone-angles $\theta_i$ of
the corresponding decorated PE-surface.

If we add an auxiliary horosphere to the distinguished vertex $\infty$, say the
horosphere given by $\{z=1\}\subset\HH^3$, we can find truncated length
$(h_i, h_j, h_k)\in\RR^3$ for the edges of a hyperideal horoprism which are incident
to $\infty$. We call $h_i$ the \emph{height} over the vertex $i$. The
$\lambda$-lengths and heights characterize the piecewise hyperbolic metric of the
polyhedral cusp. Since the hyperbolic distance between two points
$(x,y,p),(x,y,q)\in\HH^3$ is given by $|\ln(p)-\ln(q)|$, non-vanishing vertex
radii $r_i$ are related to the heights $h_i$ by
\begin{equation}\label{eq:radii_to_heights}
   r_i \;=\; \ee^{-h_i}.
\end{equation}
In the same way, we see that the distance $h_{ij}$ between an edge $ij$ in the
lower face and the auxiliary horosphere at $\infty$ satisfies $r_{ij}=\ee^{-h_{ij}}$
(see \figref{fig:lift_metric_quantities}, right). Moreover, if $\widetilde{ijk}$
is a decorated triangle discrete conformally equivalent to $ijk$ then, by
\teqref{eq:radii_to_log_factors}, the
corresponding logarithmic scale factors $u_i$ are
\begin{equation}\label{eq:height_to_log_factors}
   u_i \;=\; h_i-\tilde{h}_i.
\end{equation}
The relationship \eqref{eq:radii_to_heights} between the heights and radii provides
us with a general means to compute the $\lambda$-lengths:
\begin{equation}\label{eq:computing_hyperbolic_edge_lengths}
   \ee^{\lambda_{ij}}+\epsilon_i\epsilon_j\ee^{-\lambda_{ij}}
   \;=\;
   \len_{ij}^2\ee^{h_i+h_j}-\epsilon_i\ee^{h_j-h_i}-\epsilon_j\ee^{h_i-h_j}.
\end{equation}
Here, $\epsilon_x=1$ if $r_x>0$ and $\epsilon_x=0$ otherwise, $x\in\{i,j\}$. If
$\epsilon_i\epsilon_j=1$, this is a reformulation of
\teqref{eq:inversive_distance_to_hyperbolic_lenght} (up to a factor
of $\nicefrac{1}{2}$). From this we obtain the general formula by letting one
hyperideal vertex, or both, limit towards a corresponding ideal vertex. Which is
equivalent to considering $r_i\to0$, or $r_j\to0$, as required.

\begin{remark}\label{remark:change_of_auxiliary_horospheres}
   The heights $h_i$, and also the $\lambda$-lengths if there are ideal vertices,
   depend on the choice of auxiliary horospheres. Still, a change of auxiliary
   horosphere only results in a constant offset of all lengths adjacent to the
   respective vertex. The offset is given by the distance to the former horosphere.
   Thus, a change of auxiliary horosphere about the distinguished vertex $\infty$
   corresponds to constant scaling of the associated decorated PE-surface by
   \teqref{eq:height_to_log_factors}.
\end{remark}

\subsection{Hyperbolic decorations and canonical tessellations}
\label{sec:decorations_and_canonical_tessellations}
Up to this point we considered fixed triangulations. We are now going to
collect some results about special triangulations of hyperbolic surfaces, \ie, of
the fundamental discrete conformal invariants. In the next section
this will lead us to the definition of discrete conformal equivalence with variable
combinatorics.

\begin{figure}[t]
   \centering
   \labellist
   \small\hair 2pt
   \pinlabel $h_i$ at 280 210
   \pinlabel $\rho_i$ at 475 180
   \endlabellist
   \includegraphics[width=0.4\textwidth]{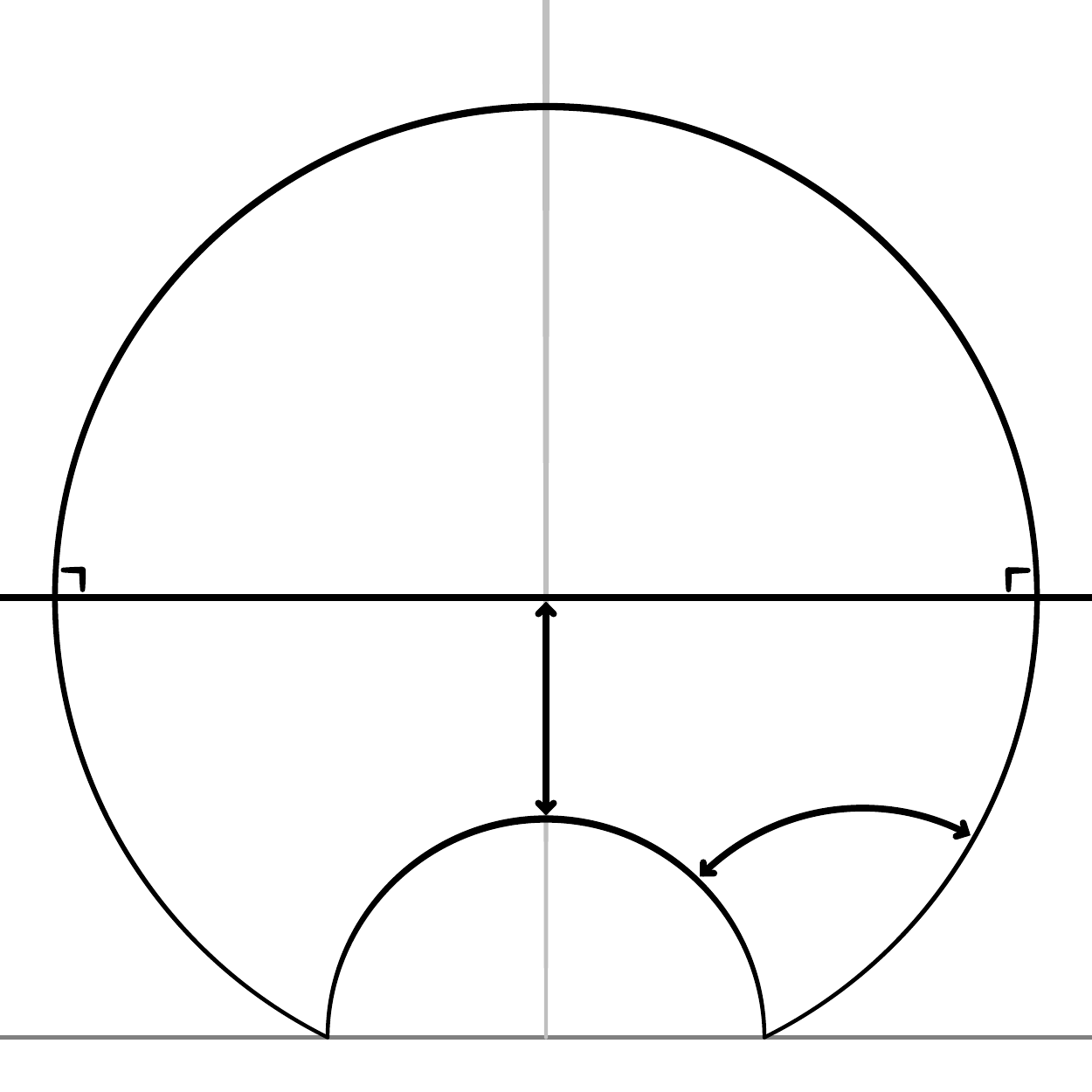}
   \qquad\quad
   \labellist
   \small\hair 2pt
   \pinlabel $h_i$ at 280 210
   \pinlabel $\rho_i$ at 440 170
   \endlabellist
   \includegraphics[width=0.4\textwidth]{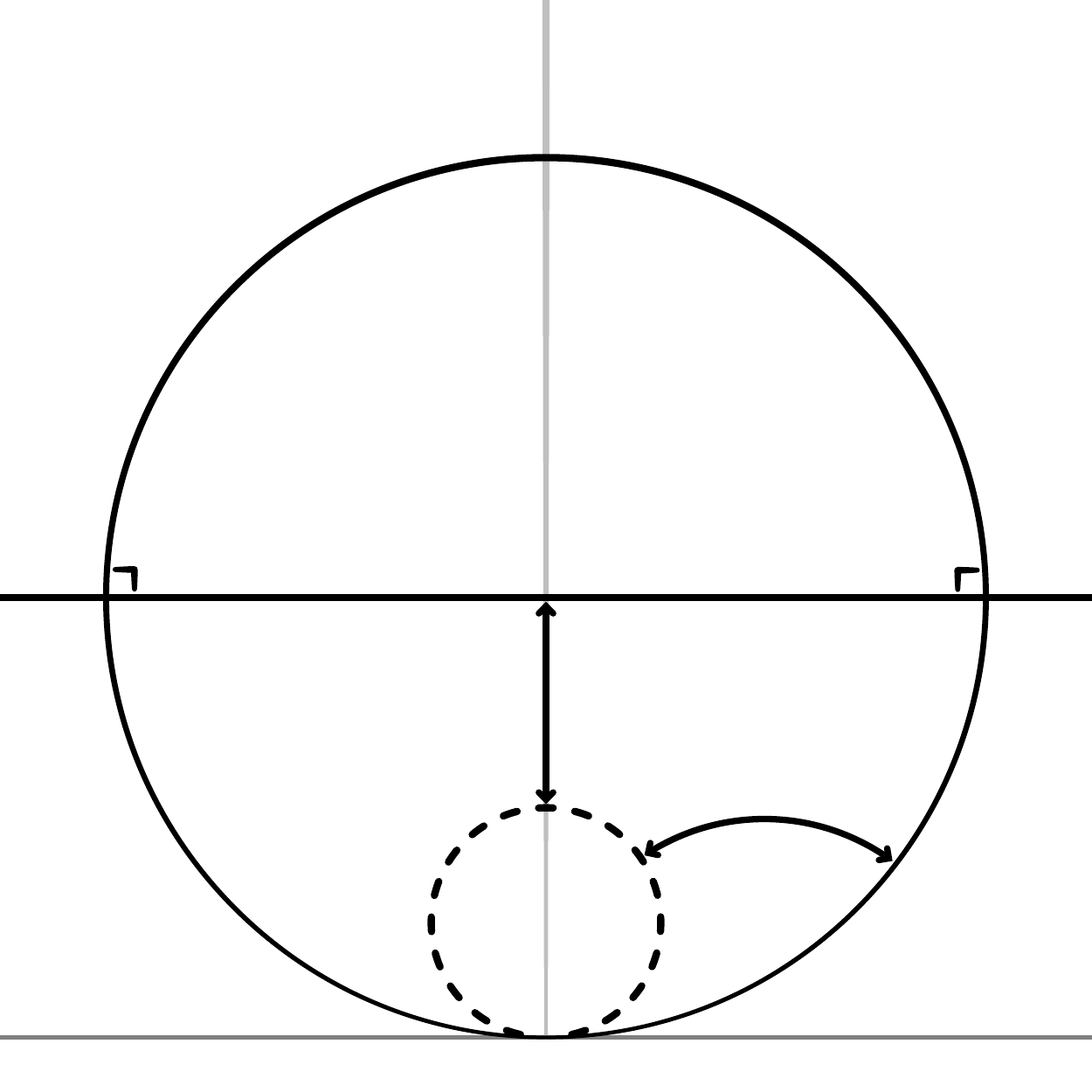}
   \caption{%
      To each hyperideal and ideal vertex of a polyhedral cusp can be
      associated a hyper- or horosphere which is orthogonal to the
      auxiliary horosphere at $\infty$, respectively. The figure shows the
      2-dimensional analogue.
   }
   \label{fig:decorations_hyperbolic_surface}
\end{figure}

First, let us consider a polyhedral cusp over a hyperbolic surface $\surf_g$. To each
hyperideal vertex we can associate the hypersphere which is orthogonal to the
auxiliary horosphere at the vertex at $\infty$. Similarly, we find a horosphere with
this property for each ideal vertex (see \figref{fig:decorations_hyperbolic_surface}).
The radii $\rho_i$ of these spheres are related to the heights $h_i$ by
\begin{equation}\label{eq:heights_to_hyperbolic_radii}
   \ee^{h_i}
   \;=\;
   \frac{\ee^{\rho_i}-\epsilon_i\ee^{-\rho_i}}{2}.
\end{equation}
Here, the radius of a horosphere is the distance to the auxiliary horosphere
we used to define its height. These hyper- and horospheres define
hyper- and horocycles in $\surf_g$, that is, a \emph{decoration} of the hyperbolic
surface. The decoration is determined by \teqref{eq:heights_to_hyperbolic_radii} and
we call the $\ee^{h_i}$ \emph{weights} on $\surf_g$.

\begin{figure}[t]
   \centering
   \includegraphics[width=0.49\textwidth]{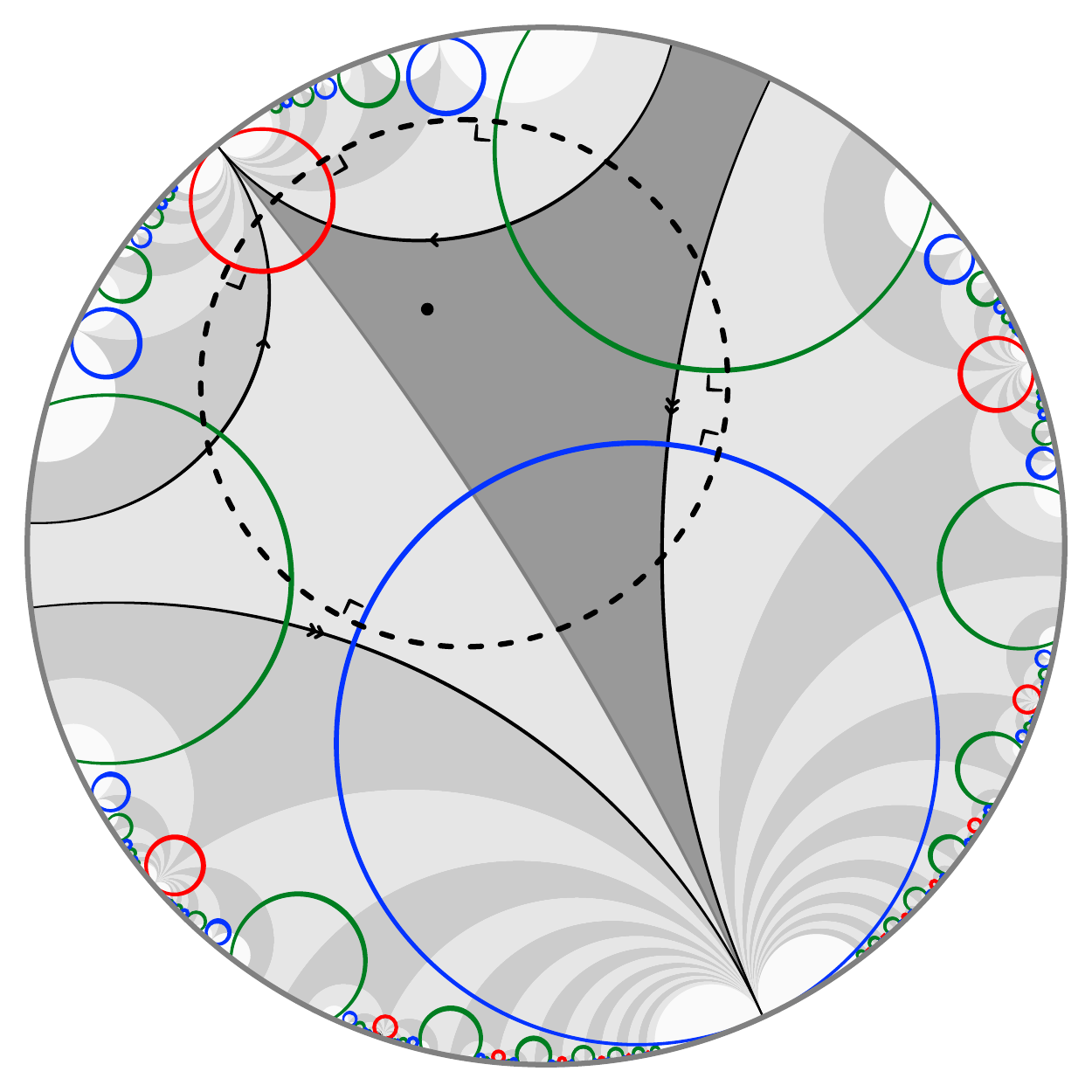}
   \caption{%
      Visualization of a proper hyperbolic disk (dashed black) of a decorated
      hyperbolic surface homeomorphic to the thrice punctured sphere in a lift
      to its universal cover, \ie, the hyperbolic plane. The associated
      canonical cell is depicted in dark gray. The checker pattern corresponds
      to the canonical tessellation of the surface. A fundamental polygon of the
      surface is bordered black. The identifications indicated by arrows
      correspond to the action of a Fuchsian group.
   }
   \label{fig:hyperbolic_proper_disks}
\end{figure}

Now, independently of polyhedral cusps, to a decoration of $\surf_g$ with pairwise
non-intersecting
vertex-cycles we can associate a \emph{canonical (geodesic) tessellation} $\tri$
of $\surf_g$ in a way similar to the construction of weighted Delaunay tessellations
of PE-surfaces
(\secref{sec:weighted_delaunay_tessellations}): for each cell of $\tri$ there is
a \emph{proper hyperbolic disk}, \ie, an isometric immersion of a hyperbolic disk into
$\surf_g$ which is orthogonal to the vertex-cycles of the cell and either does not,
or at most with an angle of $\nicefrac{\pi}{2}$, intersect any other vertex-cycle
(see \figref{fig:hyperbolic_proper_disks} and \cite[Sec.~3.2]{Lutz2023}). More
generally, a canonical tessellation can be
defined for each choice of weights $\omega\in\RR_{>0}^{\verts}$. Denote it by
$\tri_{\surf_g}^{\omega}$. For a given geodesic tessellation $\tri$ of
$\surf_g$ we define
\begin{equation}
   \mathcal{D}_{\tri}(\surf_g)
   \;\coloneqq\;
   \left\{\omega\in\RR_{>0}^{\verts}\,:\,
      \tri\text{ refines }\tri_{\surf_g}^{\omega}\right\}.
\end{equation}
Canonical tessellations and the sets $\mathcal{D}_{\tri}(\surf_g)$
have the following properties \cite[Sec.~4]{Lutz2023}:

\begin{proposition}[properties of the space of weights]
   \label{prop:properties_of_weightings}
   Given a complete hyperbolic surface with ends $\surf_g$ which is homeomorphic to
   $\eucsurf_g\setminus\verts$.
   \begin{enumerate}[label=(\roman*)]
      \item\label{item:tessellation_scale_invariance}
         Each choice of weights $\omega\in\RR^{\verts}$ of $\surf_g$ induces a unique
         canonical tessellation. The induced tessellation is invariant with
         respect to scaling of the weights, \ie, all $s\omega$, $s>0$, induce
         the same canonical tessellation.
      \item\label{item:canonical_cell_characterization}
         Each $\mathcal{D}_{\tri}(\surf_g)$ is either empty or the intersection of
         $\RR_{>0}^{\verts}$ with a closed polyhedral cone.
      \item
         For two geodesic tessellations $\tri_1$ and $\tri_2$ either
         $\mathcal{D}_{\tri_1}(\surf_g)\cap\mathcal{D}_{\tri_2}(\surf_g)=\emptyset$
         or there is another tessellation $\tri_3$ such that
         \(
            \mathcal{D}_{\tri_1}(\surf_g)\cap\mathcal{D}_{\tri_2}(\surf_g)
            =\mathcal{D}_{\tri_3}(\surf_g)
         \).
      \item
         There is only a finite number of geodesic tessellations
         $\tri_1,\dotsc,\tri_N$ of $\surf_g$ such that $\mathcal{D}_{\tri_n}(\surf_g)$
         is non-empty. In particular,
         $\RR_{>0}^{\verts}=\bigcup_{n=1}^N\mathcal{D}_{\tri_n}(\surf_g)$.
   \end{enumerate}
\end{proposition}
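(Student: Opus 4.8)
The plan is to recover all four statements from the \emph{generalized Epstein--Penner convex hull construction} \cite{EP1988, Lutz2023}, which exhibits the canonical tessellation as the radial shadow of a convex hull in Minkowski $3$-space. First I would lift the decorated surface to its universal cover, the hyperbolic plane $\HH^2$, on which the Fuchsian group $\Fgroup$ acts with quotient $\surf_g = \HH^2/\Fgroup$, and pass to the hyperboloid model $\HH^2 \subset \RR^{2,1}$. A decorating horocycle (at a cusp) is represented by a null vector and a decorating hypercycle (at a flare) by a space-like vector, in each case lying on the ray through the associated point of the ideal boundary; the weight $\omega_i = \ee^{h_i}$ fixes the position along that ray. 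Taking the union of the $\Fgroup$-orbits produces a discrete, $\Fgroup$-invariant set $\mathcal{P}_\omega \subset \RR^{2,1}$, and the claim I would establish is that the faces of its closed convex hull, projected radially onto $\HH^2$ and pushed down to $\surf_g$, are precisely the cells of $\tri_{\surf_g}^\omega$, the support plane of each face being the hyper- or horosphere of the corresponding proper disk.

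Granting this, the first statement is immediate: the convex hull of $\mathcal{P}_\omega$ is unique, hence so is the tessellation, and replacing $\omega$ by $s\omega$ with $s>0$ applies the homothety $x \mapsto s x$ of $\RR^{2,1}$. Since this homothety commutes with the action of $\Fgroup \subset \SO^{+}(2,1)$, carries $\mathcal{P}_\omega$ to $\mathcal{P}_{s\omega}$, and leaves every radial projection fixed, we get $\tri_{\surf_g}^{s\omega} = \tri_{\surf_g}^\omega$; in particular each locus of constant combinatorics is a cone.

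For the polyhedrality claim I would argue through a local characterization in the spirit of \lemref{lemma:local_characterization_euc_wdt}: a geodesic triangulation $\tri$ refines $\tri_{\surf_g}^\omega$ exactly when, at every edge $ij$ with adjacent faces $ijk$ and $ilj$, the four lifts $v_i, v_j, v_k, v_l$ sit in weakly convex position, a single signed-volume inequality. The delicate---and I expect hardest---step is to verify that, in the appropriate (weight) coordinates, each of these convexity conditions is a \emph{homogeneous linear} inequality, so that the wall across which a flip occurs is a hyperplane through the origin and $\mathcal{D}_\tri(\surf_g)$ is cut out of $\RR_{>0}^\verts$ by finitely many facet inequalities, \ie\ is the positive part of a closed polyhedral cone; the translation back to the log-scale-factor coordinates is then the bookkeeping underlying \propref{prop:space_of_log_factors}. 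Finiteness of the relevant inequalities is guaranteed by $\Fgroup$-invariance, which leaves only finitely many edges up to the group action, and this is also where the infinite-area flares must be handled with care, so that the convex hull really descends to a finite tessellation of $\surf_g$.

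The remaining two statements then follow more formally. For the intersection property I would take $\tri_3$ to be the common refinement (superposition) of $\tri_1$ and $\tri_2$: at any weight where both refine the canonical tessellation their edges lie inside common canonical cells and can be superimposed without crossings, so $\tri_3$ is again a geodesic tessellation, and its edge set being essentially $\edges_{\tri_1}\cup\edges_{\tri_2}$ makes its local conditions the conjunction of those for $\tri_1$ and $\tri_2$; hence $\mathcal{D}_{\tri_3}(\surf_g) = \mathcal{D}_{\tri_1}(\surf_g)\cap\mathcal{D}_{\tri_2}(\surf_g)$. Finally, finiteness and covering follow from the fact that the $\Fgroup$-equivariant convex hull has only finitely many combinatorial types---the finiteness of the associated secondary fan---so that only finitely many $\tri_n$ carry a nonempty $\mathcal{D}_{\tri_n}(\surf_g)$, while the first statement guarantees every $\omega\in\RR_{>0}^\verts$ induces some canonical tessellation and hence lies in one of them. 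Throughout, the main obstruction I anticipate is making the generalized convex hull construction rigorous in the presence of space-like (hyperideal) vectors and verifying the linearity of the wall conditions.
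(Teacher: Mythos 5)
The paper itself offers no proof of this proposition: it is imported wholesale from \cite[Sec.~4]{Lutz2023}, and \remref{rem:epstein_penner_convex_hull} describes exactly the generalized Epstein--Penner picture you propose, so your overall strategy is the intended one. Your handling of items (i)--(iii) is sound in outline: uniqueness and scale invariance of the tessellation from uniqueness of the convex hull and its equivariance under the homothety $x\mapsto sx$; polyhedrality of $\mathcal{D}_{\tri}(\surf_g)$ from the finitely many local face conditions of the fixed $\tri$ (finitely many because $\tri$ has finitely many edges); and the intersection property via the common refinement. One imprecision worth fixing: a hypercycle decorating a flare is represented by a space-like vector whose orthogonal plane cuts out the axis of the end, not by a point on a ray through an ideal boundary point, and the discreteness/closedness of the resulting $\Fgroup$-orbit set (hence the existence of supporting planes) is exactly the place where the hyperideal generalization of \cite{EP1988} needs work --- you flag this, correctly, as a real obstruction rather than resolving it.

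The genuine gap is item (iv). You derive finiteness of the combinatorial types from ``the finiteness of the associated secondary fan,'' but that finiteness \emph{is} item (iv); the inference is circular. $\Fgroup$-invariance bounds the number of edges of any one tessellation, but there are a priori infinitely many isotopy classes of geodesic tessellations of $\surf_g$ (e.g.\ images under Dehn twists), so one must show that only finitely many of them are realized as $\tri_{\surf_g}^{\omega}$ for some $\omega\in\RR_{>0}^{\verts}$. In the cusp case this is Akiyoshi's compactification theorem \cite{Akiyoshi2000} (see also the appendix of \cite{GLS+2018}), which the paper's own remark singles out as the nontrivial input, and the hyperideal case requires the extension in \cite{Lutz2023}. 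The proof needs an analytic compactness statement --- as $\omega$ degenerates toward $\partial\Delta^{|\verts|-1}$ the canonical cells remain in a bounded family --- and this does not follow formally from equivariance or from items (i)--(iii). Relatedly, the linearity of the wall conditions in the weight coordinates, which you correctly identify as the crux of item (ii), is asserted rather than verified; it is established in the reference by the explicit tilt computation behind \teqref{eq:tilt_formula}.
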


\begin{remark}
   \label{rem:epstein_penner_convex_hull}
   The scale invariance has a direct geometric explanation:
   cycles in the hyperbolic plane can be identified with vectors in
   Minkowski $3$-space $\RR^{2,1}$. In fact, this identification coincides with the
   lift corresponding to decorated discrete conformal maps (see
   \secref{sec:discrete_conformal_maps} and \cite[Lem.~2.9]{Lutz2023}). Since
   hyperbolic surfaces are covered by
   the hyperbolic plane, to each decorated hyperbolic surface corresponds a convex hull
   in $\RR^{2,1}$ (up to hyperbolic motions). The facets of this convex hull project to
   cells of the canonical tessellation \cite[Cor.~3.16]{Lutz2023}.
   In the case of hyperbolic cusp surfaces this is the \emph{Epstein-Penner convex hull
   construction} \cite{EP1988} and the finiteness of the number of canonical
   tessellations is known as \emph{Akiyoshi's compactification} \cite{Akiyoshi2000}
   \cite[Appendix]{GLS+2018}.
\end{remark}

\begin{remark}
   The decomposition of the space of weights $\RR^{\verts}$ into polyhedral
   cones is the analog of the classical secondary fan associated to a finite number
   of points in the euclidean plane \cite[Chpt.~7]{GKZ1994}. See also
   \cite{JLS2020} for more examples in the special case of hyperbolic cusp surfaces.
\end{remark}

The weights $\ee^{h_i}$ on $\surf_g$ induced by a polyhedral cusp are not independent
of the choice of auxiliary horosphere at $\infty$. Still, each polyhedral cusp defines
a unique canonical tessellations of $\surf_g$ because of their scaling invariance
(see Remark \ref{remark:change_of_auxiliary_horospheres}).
In view of the scaling invariance there is the following convenient reinterpretation of
\propref{prop:properties_of_weightings}:
the weights on $\surf_g$ are given by the interior points of the standard simplex
\begin{equation}
   \Delta^{|V|-1}
   \;\coloneqq\; \conv\{(\delta_{ij})_{j\in\verts}\}_{i\in\verts}
   \;\subset\; \RR_{\geq0}^{\verts}
\end{equation}
(up to scaling). Here, $\delta_{ij}$ is the Kronecker-delta. Moreover, there is
a finite polyhedral decomposition of $\Delta^{|V|-1}$ such that each facet
contains all points which induce the same canonical tessellation $\tri$ of $\surf_g$,
\ie, they are given by $\mathcal{D}_{\tri}(\surf_g)\cap\Delta^{|V|-1}$. The following
technical lemma will be useful for the analysis of the variational principle. It
provides us with information about canonical tessellations for nearly degenerate
weights.

\begin{lemma}[\!{\cite[proof of Thm.~4.3]{Lutz2023}}]
   \label{lemma:boundary_of_config_space}
   Consider a canonical tessellation $\tri$ of $\surf_g$. Let $\verts_0$ be
   the subset of vertices $i\in\verts$ such that there is an
   \(
      \bar{\omega}
      \in\partial\mathcal{D}_{\tri}(\surf_g)\cap\Delta^{|V|-1}
   \)
   with $\bar{\omega}_i=0$. Then $\tri$ contains no edge $ij$ with
   both $i,j\in\verts_{0}$.
\end{lemma}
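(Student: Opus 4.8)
The plan is to translate the degeneration $\omega_i\to 0$ into the Minkowski lift of \remref{rem:epstein_penner_convex_hull} and to read the obstruction off the linear structure of the local Delaunay conditions. By \propref{prop:properties_of_weightings}, item~\ref{item:canonical_cell_characterization}, $\mathcal{D}_{\tri}(\surf_g)=\RR_{>0}^{\verts}\cap K$ for a closed polyhedral cone $K$. Concretely, writing each vertex $a$ as a fixed light-/space-like direction $\hat v_a\in\RR^{2,1}$ determined by $\surf_g$ and lifting it to $v_a=\omega_a\hat v_a$, the canonical tessellation is the radial projection of the faces of $\conv\{g\hat v_a:g\in\Fgroup\}$; scaling $\omega$ scales the lift, matching the scale-invariance of the tessellation, and each condition \eqref{eq:local_delaunay_condition} for an edge $ab\in\edges_{\tri}$ becomes a \emph{linear} inequality $L_{ab}(\omega)\ge0$ in the weights. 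Thus $\overline{\mathcal{D}_{\tri}(\surf_g)}=K\cap\RR_{\ge0}^{\verts}$, and for each $i$ the set $F_i\coloneqq\overline{\mathcal{D}_{\tri}(\surf_g)}\cap\{\omega_i=0\}$ is a face of this cone; by definition $i\in\verts_0$ exactly when $F_i\neq\{0\}$.

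Next I would pin down the geometry of $\omega_i\to0$. Along $\overline{\mathcal{D}_{\tri}(\surf_g)}$ this sends $v_i\to0$, i.e.\ vertex $i$ sinks to the apex of the cone; equivalently, after rescaling the associated decorated PE-metric the vertex-cycle at $i$ degenerates and, in the limit, $i$ together with its link becomes cocircular, as the incident lower faces of the horoprisms flatten into a single supporting plane. The quantitative input is the conformal invariant $I_{ij}=\cosh\lambda_{ij}$ (\teqref{eq:inversive_distance_to_hyperbolic_lenght}, \teqref{eq:def_inversive_distance}): in any metric of the class the edge $ij$ has length $\tilde\len_{ij}^2=\tilde r_i^2+\tilde r_j^2+2I_{ij}\,\tilde r_i\tilde r_j$. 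This is the formula I want to exploit, since it shows the two triangles adjacent to an edge $ij$ cannot survive a \emph{simultaneous} degeneration of the decorations at $i$ and at $j$: if both decorations collapse, the edge $ij$ (and with it the two face-circles $C_{ijk}$, $C_{ilj}$) degenerates, which is incompatible with $\tri$ being a geodesic tessellation.

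With these ingredients the argument is by contradiction: assume $ij\in\edges_{\tri}$ with $i,j\in\verts_0$, so both faces $F_i$ and $F_j$ are nontrivial, and combine them using convexity of $\overline{\mathcal{D}_{\tri}(\surf_g)}$ together with the sign pattern of $L_{ij}$. For the quadrilateral $iljk$ carrying the diagonal $ij$ the form $L_{ij}=c_i\omega_i+c_j\omega_j+c_k\omega_k+c_l\omega_l$ satisfies $\operatorname{sgn}c_i=\operatorname{sgn}c_j=-\operatorname{sgn}c_k=-\operatorname{sgn}c_l$ and $\sum_a c_a=0$, because the two diagonals of four concyclic ideal points receive opposite signs in their affine dependence. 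Evaluating $L_{ij}\ge0$ on $F_i$ and on $F_j$, and using that the inequalities defining $K$ around the star of a degenerating vertex drag its neighbours to the boundary with it, I would produce a single point of $\overline{\mathcal{D}_{\tri}(\surf_g)}$ at which $\omega_i=\omega_j=0$; there the degeneration of the previous paragraph collapses the edge $ij$, the desired contradiction.

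The main obstacle is precisely this last step: upgrading the two \emph{separate} witnesses, one in $F_i$ and one in $F_j$, to a \emph{joint} degeneration $\omega_i=\omega_j=0$. Convexity of the cone alone does not force $F_i\cap F_j\neq\{0\}$, so the real content is to show that adjacency of $i$ and $j$ in $\tri$ makes the two coordinate faces interact, namely that $\{\omega_i=0\}\cap\overline{\mathcal{D}_{\tri}(\surf_g)}$ already lies in $\{\omega_j=0\}$, i.e.\ $F_i\subseteq F_j$. Establishing this from the linear relations $L_{ab}\ge0$ around the star of $i$ is the combinatorial heart of the lemma; the remaining parts are the bookkeeping of \secref{sec:discrete_conformal_maps} and \lemref{lemma:local_characterization_euc_wdt}.
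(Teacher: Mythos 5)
The paper does not actually prove this lemma itself; it imports it wholesale from the proof of Thm.~4.3 in \cite{Lutz2023}, so there is no internal argument to compare yours against. Judged on its own terms, your proposal has a genuine gap, and you name it yourself: the entire content of the lemma is the step you defer. The definition of $\verts_0$ only supplies two \emph{separate} witnesses, one $\bar\omega$ with $\bar\omega_i=0$ and another with $\bar\omega_j=0$, and nothing in your argument actually produces a single point of $\overline{\mathcal{D}_{\tri}(\surf_g)}$ with $\omega_i=\omega_j=0$. Convexity does not force $F_i\cap F_j\neq\{0\}$, the proposed inclusion $F_i\subseteq F_j$ is not even symmetric in $i$ and $j$, and the sign pattern you assign to the coefficients of $L_{ij}$ (together with $\sum_a c_a=0$, borrowed from the affine dependence of four concyclic points in the euclidean plane) is asserted rather than derived. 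It is also not clear that the conditions cutting out $\mathcal{D}_{\tri}(\surf_g)$ are linear in $\omega$ itself: in the convex-hull picture the support plane of a face $ijk$ is determined linearly by the \emph{reciprocal} weights, and \propref{prop:properties_of_weightings}~(ii) only gives you polyhedrality of the set, not an explicit linear form per edge with known signs. So both the construction of a joint witness and the linear-algebraic machinery meant to produce it are missing.

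There is a second, independent problem: the geometry you attach to $\omega_i\to0$ is backwards, so even a joint witness would not yield the contradiction you describe. By \teqref{eq:radii_to_heights}, $\omega_i=\ee^{h_i}\to0$ means $h_i\to-\infty$ and hence the euclidean vertex-radius $r_i=\ee^{-h_i}\to\infty$; the vertex-circle blows up rather than collapsing, and the link of $i$ does not become cocircular. Consequently a simultaneous degeneration at both ends of an edge does not collapse $ij$. The correct obstruction comes from the same identity you quote, read in the right direction: with $I_{ij}=\cosh\lambda_{ij}>1$ fixed and $\tilde r_i,\tilde r_j\to\infty$ one gets $\tilde\len_{ij}^2=\tilde r_i^2+\tilde r_j^2+2I_{ij}\tilde r_i\tilde r_j>(\tilde r_i+\tilde r_j)^2$ with a divergent excess, while $\tilde\len_{ik}=\tilde r_i+O(1)$ and $\tilde\len_{jk}=\tilde r_j+O(1)$ for a third vertex $k$ of bounded weight, so it is the triangle inequality in $ijk$ that fails in the limit. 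This is consistent with how the lemma is used in \lemref{lemma:euclidean_angle_limit}, where $h_i\to-\infty$ with bounded neighbours forces $\len_{ij}\to\infty$, not $\len_{ij}\to0$. In short: the reduction to a joint degeneration is unproved, and the contradiction extracted from it is based on the wrong limit; the citation to \cite{Lutz2023} is still carrying all the weight.
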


Given weights $\omega\in\RR^{\verts}$ we call a geodesic triangulation $\tri$,
which refines the unique canonical tessellation $\tri_{\surf_g}^{\omega}$, a
\emph{canonical triangulation} of $\surf_g$ with respect to the weights $\omega$.
In general, the canonical tessellation $\tri_{\surf_g}^{\omega}$ is not a
triangulation. So canonical triangulations need not be unique. The next lemma
gives us a local characterization of canonical triangulations.

\begin{lemma}\label{lemma:local_characterization_canonical_tessellation}
   Let $\tri$ be a canonical triangulation of $\surf_g$ for the weights
   $\omega\in\RR^{\verts}$. Suppose that the decorating cycles corresponding to
   $\omega$ are pairwise non-intersecting. Denote by $\rho_{ijk}$ the radius of the
   proper hyperbolic disk of the decorated hyperideal triangle $ijk$ and by
   $t_{ij}^k$ the (oriented) distance of the center of this disk to the edge $ij$
   (see \figref{fig:local_canonical_tessellations}). Then the following three
   equivalent properties hold.

   \begin{enumerate}[label=(\roman*)]
      \item\label{item:hyperbolic_local_proper_angle}
         The proper hyperbolic disk of the decorated hyperideal triangle $ijk$
         intersects the vertex-cycle at $l$ either not at all or at an angle less
         than $\nicefrac{\pi}{2}$.
      \item\label{item:hyperbolic_local_proper_distance}
         The center of the proper hyperbolic disk of the hyperideal triangle $ijk$
         \enquote{lies to the left} of the center corresponding to $ilj$, \ie,
         \begin{equation}\label{eq:local_canonical_condition}
            t_{ij}^k \,+\, t_{ij}^l \;\geq\; 0.
         \end{equation}
      \item\label{item:hyperbolic_local_proper_tilt}
         The following inequality holds
         \begin{equation}\label{eq:tilt_formula}
            \frac{\sinh(t_{ij}^k)}{\cosh(\rho_{ijk})}
            \,+\,\frac{\sinh(t_{ij}^l)}{\cosh(\rho_{ilj})}
            \;\geq\;
            0.
         \end{equation}
   \end{enumerate}
\end{lemma}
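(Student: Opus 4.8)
The plan is to follow the architecture of the proof of the euclidean analogue, \lemref{lemma:local_characterization_euc_wdt}. The fact that the three conditions \emph{hold} is immediate from the definition of a canonical triangulation: condition \ref{item:hyperbolic_local_proper_angle} is exactly the local empty-disk property at the edge $ij$, which every edge of a triangulation refining the canonical tessellation $\tri_{\surf_g}^{\omega}$ satisfies. So the real content is the equivalence. I would first establish \ref{item:hyperbolic_local_proper_distance}$\Leftrightarrow$\ref{item:hyperbolic_local_proper_tilt} by a hyperbolic-trigonometric identity, and then \ref{item:hyperbolic_local_proper_angle}$\Leftrightarrow$\ref{item:hyperbolic_local_proper_distance} by a \enquote{radical geodesic} argument paralleling the euclidean one.

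For \ref{item:hyperbolic_local_proper_distance}$\Leftrightarrow$\ref{item:hyperbolic_local_proper_tilt}, the key observation is that the circles orthogonal to the two vertex-cycles at $i$ and $j$ form a coaxial family whose centers sweep out a geodesic (the hyperbolic radical line); here the hypothesis that the decorating cycles are pairwise non-intersecting is what guarantees that this family is of the type sharing two real points, and that these lie on the geodesic carrying $ij$. Thus all these circles cross $ij$ in the \emph{same} pair of points; write $\sigma_{ij}>0$ for the resulting fixed half-chord. Working in the right hyperbolic triangle with vertices the proper-disk center, the foot of the perpendicular onto $ij$, and one intersection point, a short computation using $\cosh\rho_{ijk}=\cosh(t_{ij}^k)\cosh(\sigma_{ij})$ gives the identity
\[
   \frac{\sinh t_{ij}^k}{\cosh\rho_{ijk}} \;=\; \tanh(\sigma_{ij})\,\cot\alpha_{ij}^k,
\]
where $\alpha_{ij}^k$ is the angle at which the proper disk meets $ij$; this is the hyperbolic counterpart of \teqref{eq:distances_by_cotan}. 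Since $\tanh\sigma_{ij}>0$, condition \eqref{eq:tilt_formula} becomes $\cot\alpha_{ij}^k+\cot\alpha_{ij}^l\ge0$, i.e. $\alpha_{ij}^k+\alpha_{ij}^l\le\pi$, exactly as in \teqref{eq:alternative_cotan_weight_formula}. To match this with \eqref{eq:local_canonical_condition} I would note that both $t_{ij}^k$ and $\sinh(t_{ij}^k)/\cosh(\rho_{ijk})=\tanh(t_{ij}^k)/\cosh(\sigma_{ij})$ are strictly increasing as the center moves along the radical geodesic and vanish at the same place (center on $ij$), while $\rho_{ijk}=\rho_{ilj}$ precisely when $t_{ij}^k=-t_{ij}^l$; hence \eqref{eq:local_canonical_condition} and \eqref{eq:tilt_formula} always carry the same sign.

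For \ref{item:hyperbolic_local_proper_angle}$\Leftrightarrow$\ref{item:hyperbolic_local_proper_distance} I would parametrize the same coaxial family by the signed distance $s$ of the center to $ij$, so that the proper disk of $ijk$ sits at $s=t_{ij}^k$ and, since $ilj$ lies on the opposite side of $ij$, the proper disk of $ilj$ sits at $s=-t_{ij}^l$. The decisive point is that the proper disk of $ilj$ is by definition orthogonal to the vertex-cycle at $l$; therefore the inversive distance $I(s)$ (\teqref{eq:def_inversive_distance}) between the family circle and the vertex-cycle at $l$ attains its orthogonal value exactly at $s=-t_{ij}^l$. As in the euclidean proof, $I(s)$ is monotone in $s$, and the standard dictionary between inversive distance and intersection angle (\cite[Sec.~1]{BH2003}) identifies \enquote{meeting the vertex-cycle at $l$ at an angle at most $\nicefrac{\pi}{2}$, or not at all} with $I(s)\ge I(-t_{ij}^l)$. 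Evaluating at $s=t_{ij}^k$ turns this into $t_{ij}^k\ge -t_{ij}^l$, which is \eqref{eq:local_canonical_condition}.

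I expect the main obstacle to be the degenerate/ideal cases rather than the generic trigonometry: when some of $i,j,k,l$ are ideal the corresponding vertex-cycles are horocycles and the proper disk may itself degenerate to a horocycle ($\rho\to\infty$), so the identity above must be read through a limit. The cleanest way to control this is via \remref{rem:epstein_penner_convex_hull}: in the Minkowski model each cycle is a vector in $\RR^{2,1}$, the proper disk corresponds to the oriented plane through the three lifted vertices, and \eqref{eq:tilt_formula} is precisely the local convexity (\enquote{tilt}) inequality for the associated convex hull, in which all quantities depend continuously on the decoration across the ideal/hyperideal boundary. A secondary point requiring care is the orientation bookkeeping ensuring that the two \enquote{towards the triangle} conventions for $t_{ij}^k$ and $t_{ij}^l$ combine into the single signed coordinate $s$ with the signs used above, and confirming the direction of monotonicity of $I(s)$.
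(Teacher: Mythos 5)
Your proposal is sound and follows the same two-step architecture as the paper --- (i)$\Leftrightarrow$(ii) via the radical line and monotonicity of the inversive distance, (ii)$\Leftrightarrow$(iii) via right-angled hyperbolic triangles --- but the key computations differ. The paper disposes of (i)$\Leftrightarrow$(ii) by transferring the whole configuration to the Poincar\'e disk model and invoking the euclidean Lemma~\ref{lemma:local_characterization_euc_wdt} verbatim, whereas you redo the monotone-inversive-distance argument intrinsically; both work (one small correction: with the normalization \eqref{eq:def_inversive_distance} the critical value of the inversive distance at the orthogonal position is $0$, and \enquote{angle at most $\nicefrac{\pi}{2}$ or disjoint} corresponds to $I\ge 0$). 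For (ii)$\Leftrightarrow$(iii) the paper uses the hyperbolic law of cosines for triangles with a hyperideal vertex, anchored at the vertex-cycle at $i$, to get $\sinh(\rho_i)\cosh(\rho_{ijk})=\cosh(t_{ij}^k)\sinh(b)$ and hence a factorization of the sum of tilts as $\sinh(t_{ij}^k+t_{ij}^l)$ times a positive quantity; this forces a case analysis over ideal versus hyperideal vertices (the paper only writes out the hyperideal case). Your identity $\sinh(t_{ij}^k)/\cosh(\rho_{ijk})=\tanh(t_{ij}^k)/\cosh(\sigma_{ij})$, with $\sigma_{ij}$ the common half-chord, is arguably cleaner: both tilts acquire the same positive denominator $\cosh(\sigma_{ij})$, the equivalence with $t_{ij}^k+t_{ij}^l\ge0$ follows because $\tanh$ is odd and increasing, and no case distinction on vertex types is needed. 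The one step you must justify explicitly is the existence of $\sigma_{ij}$: you need that the proper circles of both triangles cut the geodesic $ij$ in the same two points and that these are genuine points of the hyperbolic plane. This is the statement that the pencil spanned by the two disjoint vertex-cycles, each orthogonal to $ij$, is of non-intersecting type, so that its conjugate pencil --- which contains both proper circles as well as the full circle carrying $ij$ --- is elliptic with real base points; since the proper circles are compact circles in the open disk, those base points lie inside it. This is exactly the fact the paper uses implicitly in the euclidean lemma when defining $r_{ij}$, so it is consistent with the paper's framework, but in your argument it carries the full weight of (ii)$\Leftrightarrow$(iii) and should not be left at the level of \enquote{the key observation is}. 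Your closing appeal to the Minkowski lift to control the ideal/hyperideal degenerations is reasonable and in the spirit of \remref{rem:epstein_penner_convex_hull}.
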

\begin{proof}
   The equivalence of items \ref{item:hyperbolic_local_proper_angle} and
   \ref{item:hyperbolic_local_proper_distance} follows from its euclidean counterpart
   (\lemref{lemma:local_characterization_euc_wdt}), using
   the Poincar\'e disk model of the hyperbolic plane and the normalization shown
   in \figref{fig:local_canonical_tessellations} (left).
   It is left to show that items \ref{item:hyperbolic_local_proper_distance} and
   \ref{item:hyperbolic_local_proper_tilt} are equivalent. This follows from a
   direct computation involving right-angled hyperbolic triangles. We only elaborate
   this in the case that the vertex $i$ is hyperideal. Using the notation shown in
   \figref{fig:local_canonical_tessellations} (right) and applying the hyperbolic law
   of cosines for triangles with hyperideal vertices \cite[Lem.~2.11]{Lutz2023}, we
   see that $\sinh(\rho_i)\cosh(\rho_{ijk}) = \cosh(d_{ij}^k)\sinh(b)$ holds for the
   triangle $ijk$. A similar equality can be derived for the triangle $ilj$.
   It follows
   \begin{equation}
      \frac{\sinh(t_{ij}^k)}{\cosh(\rho_{ijk})}
      \,+\,\frac{\sinh(t_{ij}^l)}{\cosh(\rho_{ilj})}
      \;=\;
      \frac{\sinh(t_{ij}^k+t_{ij}^l)}
         {\cosh(t_{ij}^k)\cosh(t_{ij}^l)}
      \frac{\sinh(\rho_i)}{\sinh(b)}.\qedhere
   \end{equation}
\end{proof}

\begin{figure}[t]
   \centering
   \labellist
   \small\hair 2pt
   \pinlabel $i$ at 295 0
   \pinlabel $j$ at 302 600
   \pinlabel $k$ at 6 250
   \pinlabel $l$ at 600 335
   \pinlabel $\rho_{ijk}$ at 240 230
   \pinlabel $\rho_{ilj}$ at 392 235
   \pinlabel $t_{ij}^k$ at 270 335
   \pinlabel $t_{ij}^l$ at 340 335
   \endlabellist
   \includegraphics[width=0.4\textwidth]{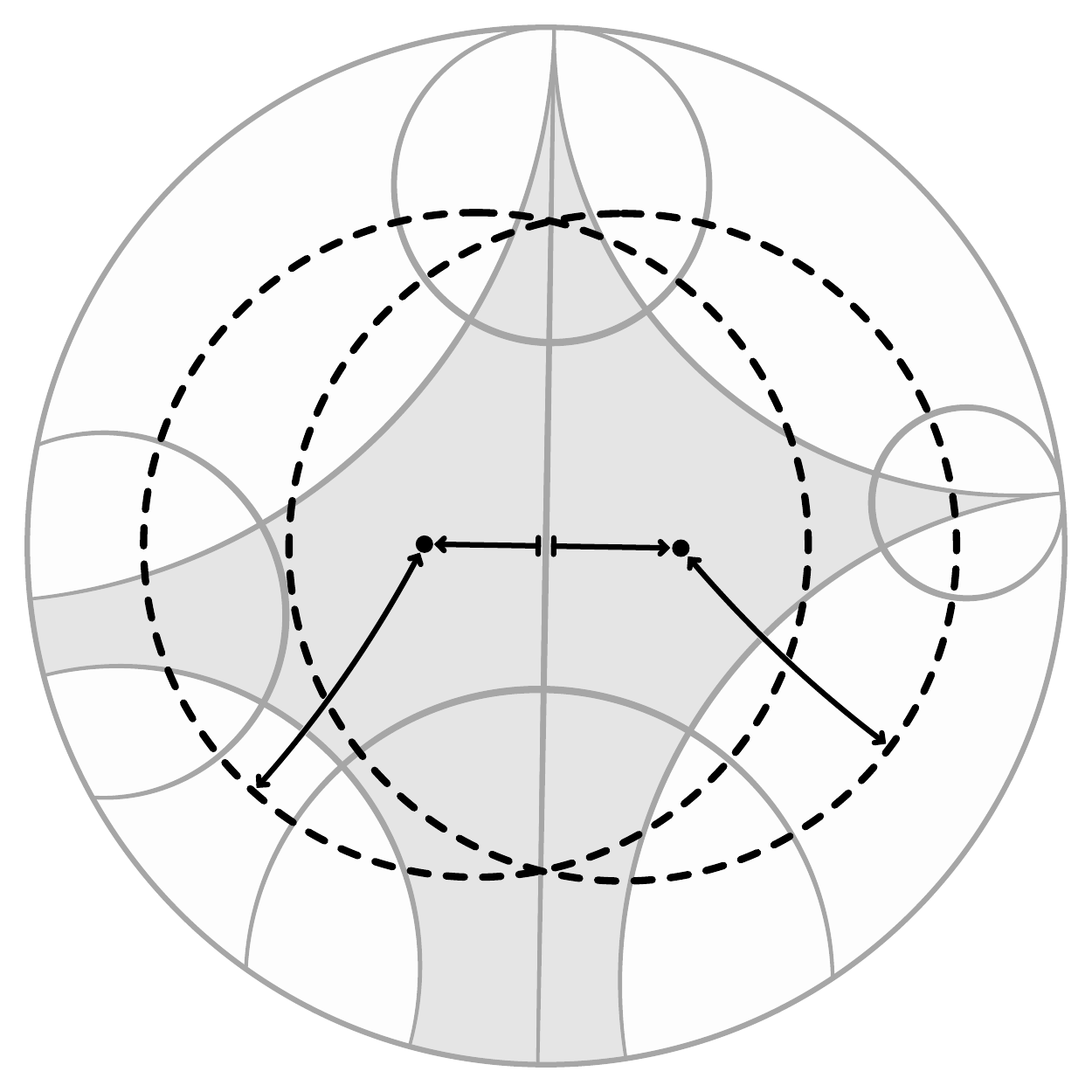}
   \qquad\quad
   \labellist
   \small\hair 2pt
   \pinlabel $i$ at 295 0
   \pinlabel $j$ at 302 600
   \pinlabel $k$ at 6 250
   \pinlabel $\rho_i$ at 165 118
   \pinlabel $b$ at 315 225
   \pinlabel $\rho_{ijk}$ at 170 230
   \pinlabel $t_{ij}^k$ at 270 335
   \endlabellist
   \includegraphics[width=0.4\textwidth]{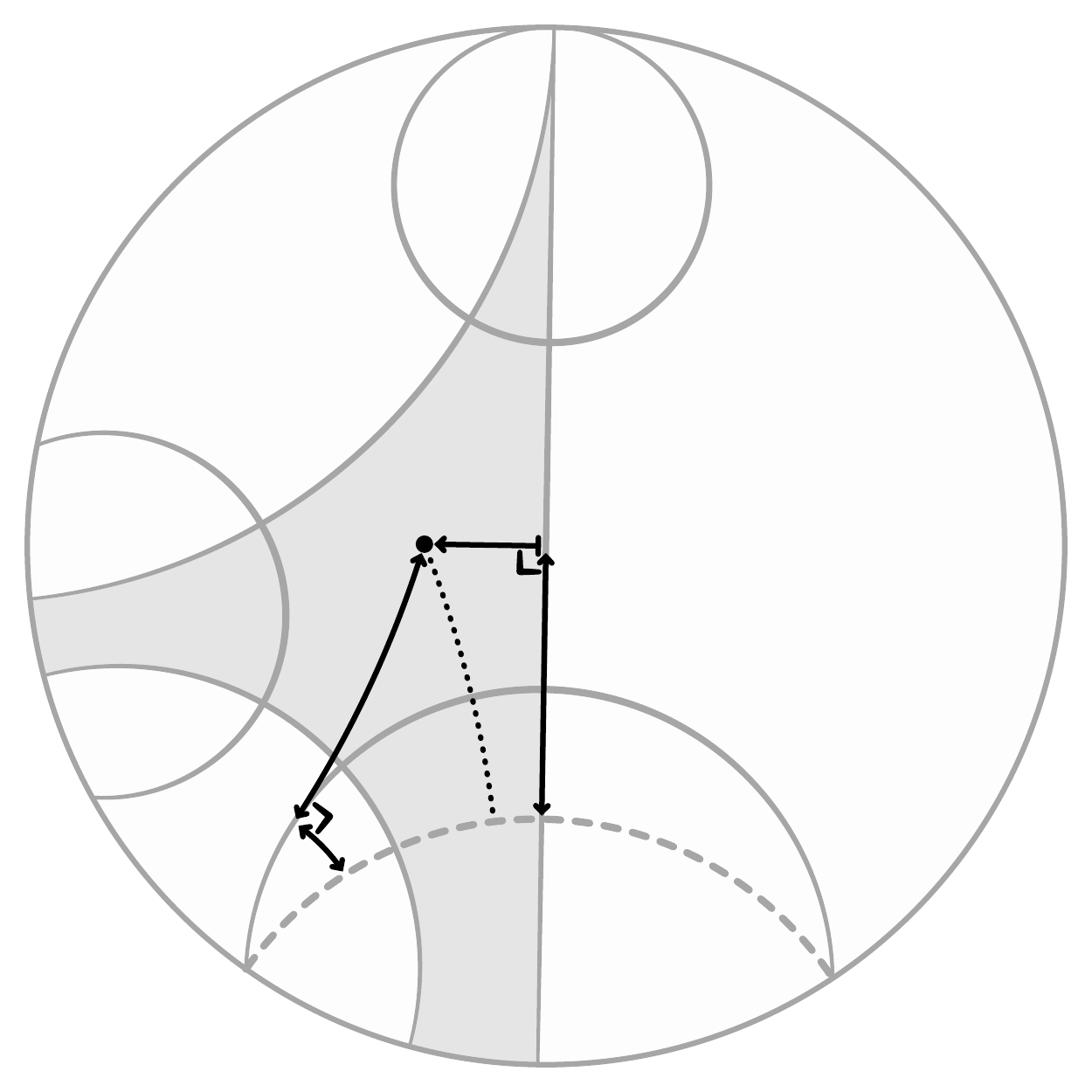}
   \caption{%
      Sketch of geometric quantities in a decorated hyperideal triangle which
      can be used to locally characterize canonical triangulations.
      Observe, that the left figure is the analogue of
      \figref{fig:decorated_triangles_and_weighted_Delaunay} (right).
      The depicted hyperideal quadrilateral is normalized such that the edge
      $ij$ lies on the $y$-axis and the centers of the proper hyperbolic disks
      lie on the $x$-axis.
   }
   \label{fig:local_canonical_tessellations}
\end{figure}

\begin{remark}
   The $\nicefrac{\sinh(t_{ij}^k)}{\cosh(\rho_{ijk})}$ in the lemma above
   is called the \emph{tilt} of the decorated hyperideal triangle $ijk$ along
   the edge $ij$. For decorated hyperbolic cusp surfaces \teqref{eq:tilt_formula}
   is known as \emph{Weeks' tilt formula} \cite{Weeks1993}.
\end{remark}

\begin{proposition}[flip algorithm for canonical tessellations]
   \label{prop:flip_algo_hyerpbolic}
   For given weights $\omega\in\RR_{>0}^{\verts}$ a canonical triangulation $\tri$ of
   $\surf_g$ can be computed with the \emph{flip algorithm} as described in
   \propref{prop:euclidean_flip_algoritm} using inequality
   \eqref{eq:local_canonical_condition} instead of
   \eqref{eq:local_delaunay_condition}. In particular, $\tri$ is
   a canonical triangulation if and only if all its edges satisfy the inequality
   \eqref{eq:local_canonical_condition}.
   Furthermore, two canonical triangulations for the same weights differ
   only on edges which satisfy \eqref{eq:local_canonical_condition} with equality.
\end{proposition}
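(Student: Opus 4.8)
The plan is to run the same scheme as in the euclidean case (\propref{prop:euclidean_flip_algoritm}), using \lemref{lemma:local_characterization_canonical_tessellation} as the per-edge dictionary between the global \enquote{proper-disk} condition defining canonical tessellations and the local inequality \eqref{eq:local_canonical_condition}. Three things must be established: the local characterization (a geodesic triangulation is canonical iff every edge satisfies \eqref{eq:local_canonical_condition}), the convergence of the flip algorithm, and the non-uniqueness statement. I would first dispose of the easy implication. If $\tri$ is a canonical triangulation for the weights $\omega$, then by definition the proper hyperbolic disk of each triangle $ijk$ meets every other vertex-cycle at most orthogonally; in particular it meets the cycle at the vertex $l$ opposite the edge $ij$ at most orthogonally. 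This is exactly item~\ref{item:hyperbolic_local_proper_angle} of \lemref{lemma:local_characterization_canonical_tessellation}, so item~\ref{item:hyperbolic_local_proper_distance}, i.e.\ \eqref{eq:local_canonical_condition}, holds along $ij$.

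The substance of the proposition is the converse: that the \emph{local} conditions at all edges force the \emph{global} emptiness condition. My preferred route is the convex-hull picture of \remref{rem:epstein_penner_convex_hull}. Lift the decorated surface to vectors in $\RR^{2,1}$ and, working equivariantly with respect to the deck group in the universal cover, lift $\tri$ to a polyhedral surface $S_{\tri}$ spanning the lifted vertices. Reading the tilt formula \eqref{eq:tilt_formula} as a signed measure of the dihedral bending of $S_{\tri}$ across (the lift of) $ij$, the inequality \eqref{eq:local_canonical_condition} says precisely that $S_{\tri}$ is locally convex there. An equivariant, locally convex polyhedral surface through a discrete, cocompact set of lifted points coincides with the lower boundary of the convex hull $\conv$ of those points (the globalization principle that local convexity implies global convexity). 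Hence $S_{\tri}=\partial\conv$, and by \remref{rem:epstein_penner_convex_hull} its projection refines the canonical tessellation; that is, $\tri$ is canonical.

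The two remaining claims fall out of the same picture. Equality in \eqref{eq:local_canonical_condition} along $ij$ is equivalent to the two lifted triangles sharing a supporting hyperplane, i.e.\ lying in a common facet of $\conv$. Since every canonical triangulation lifts to a triangulation of the \emph{same} convex-hull boundary, two of them can differ only by re-triangulating such flat facets, and these re-triangulations are exactly diagonal flips across edges where \eqref{eq:local_canonical_condition} holds with equality. For the flip algorithm, an edge violating \eqref{eq:local_canonical_condition} lifts to a reflex (locally non-convex) edge of $S_{\tri}$; flipping the underlying quadrilateral is admissible and lowers $S_{\tri}$ strictly toward $\conv$. Monitoring a strictly monotone convex potential (for instance the volume trapped between $S_{\tri}$ and the lower convex hull, which strictly decreases at each flip) shows that no triangulation recurs, so in combination with the finiteness furnished by \propref{prop:properties_of_weightings} the process terminates at a triangulation all of whose edges satisfy \eqref{eq:local_canonical_condition}, i.e.\ at a canonical one.

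I expect the local-to-global step of the second paragraph to be the main obstacle. On a simply connected patch it is the classical Delaunay empty-disk argument, but on $\surf_g$ one must run the convexity globalization equivariantly in the universal cover and verify that only finitely many triangulations are visited by the flip algorithm; this is where I would lean on the detailed analysis of \cite[Sec.~4]{Lutz2023} (and the euclidean analogues \cite{BI2008,BS2007}) rather than redo the compactness bookkeeping by hand.
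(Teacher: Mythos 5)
The paper's own proof is a two-line citation: it invokes \cite[Thm.~3.14]{Lutz2023}, where the flip algorithm for canonical tessellations is established via the tilt formula \eqref{eq:tilt_formula}, and then uses \lemref{lemma:local_characterization_canonical_tessellation} only to translate the tilt condition into \eqref{eq:local_canonical_condition}. Your proposal instead re-derives the content of that cited theorem through the Epstein--Penner convex-hull picture of \remref{rem:epstein_penner_convex_hull}: lift to $\RR^{2,1}$, identify \eqref{eq:local_canonical_condition} with local convexity of the equivariant polyhedral surface $S_{\tri}$, and globalize. This is a legitimate and genuinely different (more self-contained) route --- indeed it is the mechanism underlying the reference the paper leans on, since the canonical tessellation is exactly the projection of the convex-hull boundary (\cite[Cor.~3.16]{Lutz2023}). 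What the paper's approach buys is brevity and the avoidance of the two delicate points you correctly flag: the equivariant local-to-global convexity step, and termination of the flip algorithm. What your approach buys is a transparent geometric explanation of all three assertions at once, in particular why equality in \eqref{eq:local_canonical_condition} corresponds to coplanar lifted triangles and hence to the ambiguity of triangular refinements of a non-simplicial canonical cell.

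One caveat on your termination argument: the finiteness in \propref{prop:properties_of_weightings} counts canonical tessellations as the weights vary; it does not bound the set of geodesic triangulations of $\surf_g$ (which is infinite), so \enquote{no triangulation recurs $+$ finiteness} does not close the argument as stated. A strictly decreasing potential rules out cycling but not an infinite non-repeating run; one needs an additional properness or compactness input (as in \cite{BS2007} for the undecorated case, or \cite[Sec.~3--4]{Lutz2023} here). Since you explicitly defer this bookkeeping to the literature --- which is precisely what the paper does for the entire proposition --- I regard this as a correctly identified, outsourced step rather than an error, but the specific appeal to \propref{prop:properties_of_weightings} should be replaced by the termination analysis in the cited reference.
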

\begin{proof}
   The flip algorithm for canonical tessellations was analysed in
   \cite[Thm.~3.14]{Lutz2023} using the tilt formula \eqref{eq:tilt_formula}.
   \lemref{lemma:local_characterization_canonical_tessellation} shows that this
   is equivalent to using \teqref{eq:local_canonical_condition}.
\end{proof}

\subsection{Convex polyhedral cusps and discrete conformal equivalence}
\label{sec:convex_polyhedra_dce}
Let $P$ be the polyhedral cusp corresponding to the triangulated PE-surface
$(\tri, \len, r)$ with fundamental discrete conformal invariant $\surf_g$.
For the hyperideal horoprism given by the decorated euclidean
triangle $ijk\in\faces_{\tri}$ the dihedral angle at the edge $ij$ in the lower
face is given by $\alpha_{ij}^k$ (see
\figref{fig:local_delaunay_convexity_equivalence}, left).
Motivated by this observation, we call the edge $ij$ \emph{convex} if
$\alpha_{ij}^k+\alpha_{ij}^l\leq\pi$. Here, $ilj$ is the other face adjacent to $ij$
in $\tri$. The polyhedral cusp is \emph{convex} if all edges of $\tri$ are convex.
We denote the set of all heights of convex polyhedral cusps over the triangulated
hyperbolic surface $(\surf_g, \tri)$ by
$\mathcal{P}_{\tri}(\surf_g)\;\subset\;\RR^{\verts}$.

The following lemma contains the key observation for the connection between
decorated PE-surfaces, convex polyhedral cusps, and canonical tessellations.
Its proof relies on a well-known object associated to three (non-intersecting)
euclidean spheres: their \emph{radical line}, also known as, \emph{power line}.
Three euclidean spheres define a one-parameter family of spheres, each of which is
simultaneously orthogonal to all three. The radical line is both the locus
of centers of these spheres and the unique line orthogonal to all of them.

\begin{figure}[t]
   \centering
   \labellist
   \small\hair 2pt
   \pinlabel $i$ at 335 85
   \pinlabel $k$ at 90 170
   \pinlabel $l$ at 515 217
   \pinlabel $\alpha_{ij}^k+\alpha_{ij}^l$ at 342 365
   \endlabellist
   \includegraphics[width=0.575\textwidth]{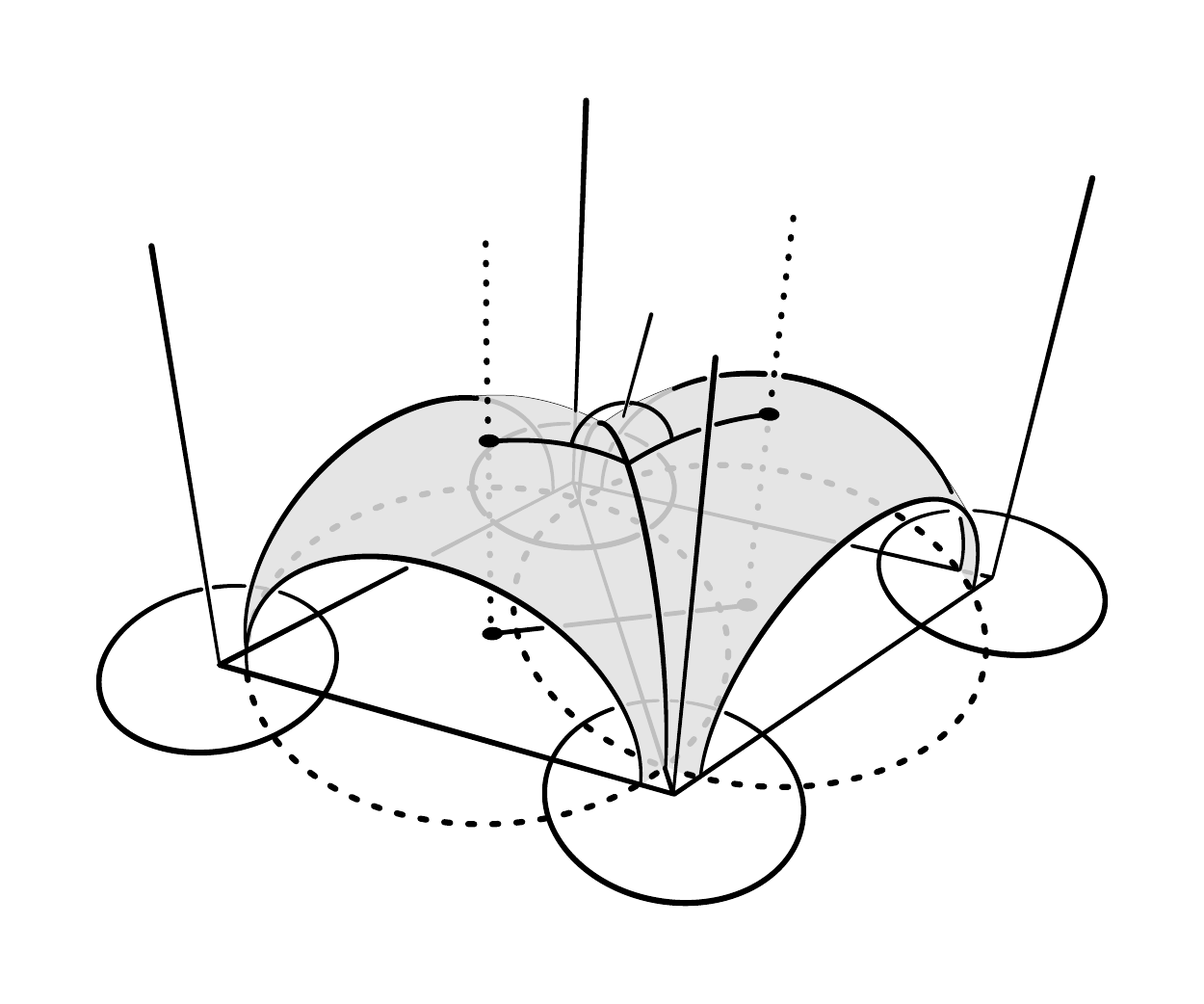}
   \quad
   \labellist
   \small\hair 2pt
   \pinlabel $j$ at 445 40
   \pinlabel $i$ at 145 40
   \endlabellist
   \includegraphics[width=0.39\textwidth]{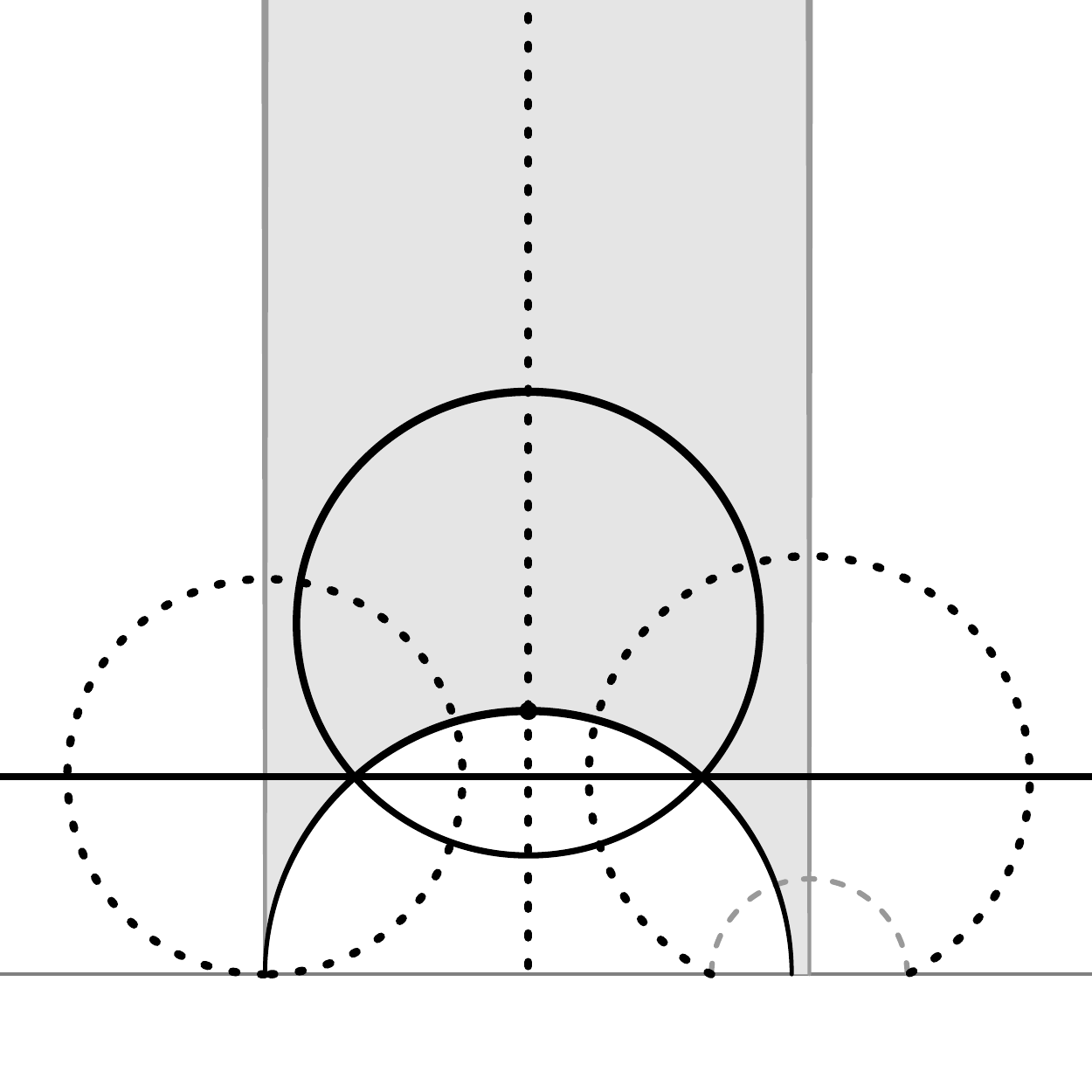}
   \caption{%
      \textsc{Left}: sketch of two adjacent hyperideal horoprisms.
      Note that the centers of the proper hyperbolic disks project to the
      centers of the euclidean face-circles (dotted lines).
      \textsc{Right}: sketch of the 2-dimensional analogue of the main observation
      used to prove \lemref{lemma:local_delaunay_convexity_equivalence}. The
      black and dotted circles are contained in \enquote{dual pencils}, respectively.
      Thus, they are orthogonal. In particular, the centers of the black circles lie
      on the dotted line.
   }
   \label{fig:local_delaunay_convexity_equivalence}
\end{figure}

\begin{lemma}\label{lemma:local_delaunay_convexity_equivalence}
   Given a decorated euclidean quadrilateral $iljk$. The following
   statements are equivalent (see \figref{fig:local_delaunay_convexity_equivalence}):
   \begin{enumerate}[label=(\roman*)]
      \item\label{item:delaunay_to_convexity_euclidean}
         the diagonal $ij$ of the decorated euclidean quadrilateral is locally
         weighted Delaunay, \ie, it satisfies one of the conditions given in
         \lemref{lemma:local_characterization_euc_wdt},
      \item\label{item:delaunay_to_convexity_prisim}
         the common edge $ij$ of the two hyperideal horoprisms defined by the
         decorated euclidean triangles $ijk$ and $ilj$, respectively, is convex,
         \ie, $\alpha_{ij}^k+\alpha_{ij}^l\leq\pi$,
      \item\label{item:delaunay_to_convexity_hyperbolic}
         the diagonal $ij$ is locally canonical in the decorated hyperideal
         quadrilateral given by the lower faces of these hyperideal horoprisms,
         \ie, it satisfies one of the conditions given in
         \lemref{lemma:local_characterization_canonical_tessellation}
         (maybe after rescaling).
   \end{enumerate}
\end{lemma}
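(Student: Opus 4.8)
The plan is to prove the two equivalences \ref{item:delaunay_to_convexity_euclidean}$\Leftrightarrow$\ref{item:delaunay_to_convexity_prisim} and \ref{item:delaunay_to_convexity_prisim}$\Leftrightarrow$\ref{item:delaunay_to_convexity_hyperbolic} separately: the first is essentially formal, while the second carries the geometric content. For \ref{item:delaunay_to_convexity_euclidean}$\Leftrightarrow$\ref{item:delaunay_to_convexity_prisim} I would observe that the inequality $\alpha_{ij}^k+\alpha_{ij}^l\le\pi$ in \ref{item:delaunay_to_convexity_prisim} is literally condition \ref{item:euclidean_local_proper_angles} of \lemref{lemma:local_characterization_euc_wdt}. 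Indeed, as recalled just before the lemma, the dihedral angle of the horoprism over $ijk$ along the lower edge $ij$ equals the euclidean intersection angle $\alpha_{ij}^k$ of the face-circle $C_{ijk}$ with the edge $ij$; this is the standard fact that the dihedral angle between two totally geodesic planes of $\HH^3$ equals the intersection angle of their boundary circles in $\CC$. Hence convexity of the common edge is one of the equivalent formulations of the local weighted Delaunay property, and \ref{item:delaunay_to_convexity_euclidean}$\Leftrightarrow$\ref{item:delaunay_to_convexity_prisim} follows.

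For \ref{item:delaunay_to_convexity_prisim}$\Leftrightarrow$\ref{item:delaunay_to_convexity_hyperbolic} I would pass to the upper half-space model and use the lift of \secref{sec:lift_to_hyperbolic_space}. The face-circles $C_{ijk}$ and $C_{ilj}$ are both orthogonal to the vertex-circles at $i$ and $j$, hence lie in the pencil orthogonal to the one generated by these two circles; their centers therefore lie on the common radical axis, and both meet the line through $i$ and $j$ in the same pair of points $p,q$. This is the dual-pencil observation of \figref{fig:local_delaunay_convexity_equivalence} (right). Consequently the half-spheres $S_k$ and $S_l$ over $C_{ijk}$ and $C_{ilj}$ share the geodesic from $p$ to $q$, which is the lift of the edge $ij$, and the two lower faces meet along it with interior dihedral angle $\alpha_{ij}^k+\alpha_{ij}^l$. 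The three-dimensional form of the same observation — the radical line of three euclidean spheres described before the lemma — is the crucial point: the proper hyperbolic disks of $ijk$ and $ilj$ are orthogonal to the hyper- and horospheres decorating $i$ and $j$ (see \secref{sec:decorations_and_canonical_tessellations}), so their centers lie on the radical line of those two spheres, and under orthographic projection each such center goes to the euclidean center of the corresponding face-circle, as indicated in \figref{fig:local_delaunay_convexity_equivalence} (left).

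With this dictionary I would match conditions \ref{item:delaunay_to_convexity_prisim} and \ref{item:delaunay_to_convexity_hyperbolic} through their common threshold. The degenerate configuration $C_{ijk}=C_{ilj}$ is simultaneously the equality case $\alpha_{ij}^k+\alpha_{ij}^l=\pi$ of \ref{item:delaunay_to_convexity_prisim} (the lower faces become coplanar, $S_k=S_l$); the case in which all four vertex-circles possess a common orthogonal circle, so that $d_{ij}^k+d_{ij}^l=0$; and the case in which the four hyperideal vertices lie on a single cycle of $\surf_g$, so that the two proper hyperbolic disks coincide and $t_{ij}^k+t_{ij}^l=0$. Thus the three sign-quantities vanish together. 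Away from the threshold I would use monotonicity: the proof of \lemref{lemma:local_characterization_euc_wdt} shows that the inversive distance between the members of the pencil orthogonal to the vertex-circles at $i,j$ and the vertex-circle at $l$ is strictly monotone along the radical axis, and the tilt underlying \lemref{lemma:local_characterization_canonical_tessellation} is strictly monotone along the corresponding hyperbolic family. Because orthographic projection identifies these two families and matches the orientation of the oriented distances $d_{ij}^k$ and $t_{ij}^k$, the inequalities in \ref{item:delaunay_to_convexity_prisim} and \ref{item:delaunay_to_convexity_hyperbolic} hold on the same side of the common threshold. Before invoking \lemref{lemma:local_characterization_canonical_tessellation} one rescales the weights so that the decorating cycles are pairwise non-intersecting, which accounts for the phrase \enquote{maybe after rescaling}.

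The step I expect to be the main obstacle is the orientation-matching in the last paragraph. The coincidence of the three thresholds is soft, but to conclude that \ref{item:delaunay_to_convexity_prisim} and \ref{item:delaunay_to_convexity_hyperbolic} select the \emph{same} side of the threshold — rather than opposite sides — one must verify that orthographic projection $\HH^3\to\CC$ carries the radical line of the decorating spheres at $i,j$ onto the radical axis of the two vertex-circles with $\operatorname{sgn}(t_{ij}^k)=\operatorname{sgn}(d_{ij}^k)$, and that the hyperbolic monotonicity runs in the same direction as the euclidean one. This is most safely pinned down either by the explicit relation between the centers (a computation with right-angled hyperbolic triangles of the kind already used in \lemref{lemma:local_characterization_canonical_tessellation}) or by calibrating the orientation at a single symmetric reference configuration. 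The case of ideal vertices, where a decorating sphere degenerates to a horosphere, is a routine limit of the same argument.
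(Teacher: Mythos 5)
Your proposal is correct and follows essentially the same route as the paper: the equivalence \ref{item:delaunay_to_convexity_euclidean}$\Leftrightarrow$\ref{item:delaunay_to_convexity_prisim} is read off from \lemref{lemma:local_characterization_euc_wdt} via the dihedral-angle identification, and \ref{item:delaunay_to_convexity_euclidean}$\Leftrightarrow$\ref{item:delaunay_to_convexity_hyperbolic} rests on the observation that the radical line of the decorating spheres is vertical, so the hyperbolic center of the proper disk projects orthographically onto the euclidean center of the face-circle and the two oriented-distance conditions of \lemref{lemma:local_characterization_euc_wdt}\,\ref{item:euclidean_local_proper_distance} and \lemref{lemma:local_characterization_canonical_tessellation}\,\ref{item:hyperbolic_local_proper_distance} are compared directly. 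The orientation-matching you flag as the main obstacle is precisely what this verticality delivers (the paper likewise leaves the sign bookkeeping implicit), so no separate calibration is needed.
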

\begin{proof}
   The equivalence of items \ref{item:delaunay_to_convexity_euclidean} and
   \ref{item:delaunay_to_convexity_prisim} was already proved in
   \lemref{lemma:local_characterization_euc_wdt}.

   Next, consider the hyperideal horoprism defined by the decorated euclidean triangle
   $ijk$. We assume that the decorating cycles in the lower face are non-intersecting
   (maybe after rescaling). Let $S$ be the hyperbolic sphere with the same
   hyperbolic center and hyperbolic radius as the hyperbolic proper disk defined
   by the decoration of the lower face.
   By construction, the half-sphere representing the hyperbolic plane over the
   face cycle $C_{ijk}$, $S$, and the auxiliary horosphere at $\infty$ intersect
   the spheres decorating the vertices of the lower face orthogonally.
   Therefore, the euclidean radical line of these spheres is also orthogonal to
   the ideal boundary of $\HH^3$, \ie, it is a \enquote{vertical line}
   (see \figref{fig:local_delaunay_convexity_equivalence}, right). Hence,
   the center of $C_{ijk}$ and the hyperbolic center of $S$ lie on the
   radical line.

   From this observation together with \lemref{lemma:local_characterization_euc_wdt}
   \ref{item:euclidean_local_proper_distance} and
   \lemref{lemma:local_characterization_canonical_tessellation}
   \ref{item:hyperbolic_local_proper_distance} follows the equivalence of
   items \ref{item:delaunay_to_convexity_euclidean} and
   \ref{item:delaunay_to_convexity_hyperbolic}.
\end{proof}

\begin{remark}
   One can also define a radical line in hyperbolic geometry
   (see, \eg, \cite[Sec.~2]{Lutz2023}). This definition can be used to modify our
   proof of \lemref{lemma:local_delaunay_convexity_equivalence} to be completely
   intrinsic. In fact, the euclidean and hyperbolic radical lines coincide
   for our choice of model and normalization.
\end{remark}

If $\tri$ and $\tilde{\tri}$ are both weighted Delaunay triangulations of
$(\dist_{\eucsurf_g}, r)$ then \lemref{lemma:local_delaunay_convexity_equivalence}
grants that they induce the same complete hyperbolic surface $\surf_g$.
Thus, we can associate to each decorated PE-surface a unique complete hyperbolic
surface $\surf_g$, \ie, the surface induced by any triangular refinement of its
unique weighted Delaunay tessellation. This $\surf_g$ is called the
\emph{fundamental discrete conformal invariant} of the PE-metric
$(\dist_{\eucsurf_g}, r)$.

\begin{definition}[discrete conformal equivalence, variable combinatorics]
   \label{def:dce_decoration_hyperbolic_version}
   Given two decorated PE-metrics $(\dist_{\eucsurf_g}, r)$ and
   $\big(\widetilde{\dist}_{\eucsurf_g}, \tilde{r}\big)$ on the marked genus $g$
   surface $(\eucsurf_g, \verts)$. We say that they are
   \emph{discrete conformally equivalent} if they share the same fundamental discrete
   conformal invariant.
\end{definition}

\begin{proposition}\label{prop:correspondence_spaces}
   Given a decorated PE-metric $(\dist_{\eucsurf_g}, r)$ on the marked
   surface $(\eucsurf_g, \verts)$ of genus $g$. Denote by $\surf_g$ its
   fundamental discrete conformal invariant and  let $\tri$ be a
   weighted Delaunay triangulation. Then $\mathcal{C}_{\tri}(\dist_{\eucsurf_g}, r)$,
   $\mathcal{P}_{\tri}(\surf_g)$ and $\mathcal{D}_{\tri}(\surf_g)$ are
   homeomorphic.
\end{proposition}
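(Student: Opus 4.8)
The plan is to exhibit two explicit homeomorphisms, $\mathcal{C}_{\tri}(\dist_{\eucsurf_g}, r)\to\mathcal{P}_{\tri}(\surf_g)$ and $\mathcal{P}_{\tri}(\surf_g)\to\mathcal{D}_{\tri}(\surf_g)$, each of which transports the defining condition of one set onto that of the other. Fix once and for all an auxiliary horosphere at $\infty$ and let $h\in\RR^{\verts}$ be the heights of the polyhedral cusp associated to $(\tri,\len,r)$. The first map is the affine map $u\mapsto h-u$ furnished by \teqref{eq:height_to_log_factors}: a discrete conformal change with scale factors $u$ preserves the fundamental invariant $\surf_g$ (equivalently, the $\lambda$-lengths) and merely replaces the heights $h$ by $h-u$. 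The second map is $h'\mapsto\omega=(\ee^{h'_i})_{i\in\verts}$, the identification of heights with weights on $\surf_g$ from \secref{sec:decorations_and_canonical_tessellations} (see \teqref{eq:heights_to_hyperbolic_radii}). As an affine map and an exponential map, respectively, these are already homeomorphisms $\RR^{\verts}\to\RR^{\verts}$ and $\RR^{\verts}\to\RR_{>0}^{\verts}$, so it remains only to check that each restricts to a bijection between the relevant subsets.

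For the first map, $u$ lies in $\mathcal{C}_{\tri}(\dist_{\eucsurf_g}, r)$ precisely when $\tri$ is still a weighted Delaunay triangulation after the conformal change by $u$, that is, by \propref{prop:euclidean_flip_algoritm}, when every edge $ij$ satisfies the local condition \eqref{eq:local_delaunay_condition}. By \lemref{lemma:local_delaunay_convexity_equivalence}, items \ref{item:delaunay_to_convexity_euclidean} and \ref{item:delaunay_to_convexity_prisim}, this holds edge-by-edge if and only if the corresponding edge of the polyhedral cusp with heights $h-u$ is convex, i.e. $\alpha_{ij}^k+\alpha_{ij}^l\leq\pi$. Thus $u\in\mathcal{C}_{\tri}(\dist_{\eucsurf_g}, r)$ if and only if the polyhedral cusp with heights $h-u$ is convex, which is exactly $h-u\in\mathcal{P}_{\tri}(\surf_g)$. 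Hence $u\mapsto h-u$ restricts to the desired homeomorphism onto $\mathcal{P}_{\tri}(\surf_g)$.

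For the second map, the heights $h'$ give the weights $\omega=(\ee^{h'_i})_{i\in\verts}$, and the decorating cycles may be assumed pairwise non-intersecting after rescaling, which by \propref{prop:properties_of_weightings} item \ref{item:tessellation_scale_invariance} does not change the induced canonical tessellation. By \lemref{lemma:local_delaunay_convexity_equivalence}, items \ref{item:delaunay_to_convexity_prisim} and \ref{item:delaunay_to_convexity_hyperbolic}, an edge of the polyhedral cusp is convex if and only if the corresponding diagonal is locally canonical, i.e. satisfies \eqref{eq:local_canonical_condition}. By \propref{prop:flip_algo_hyerpbolic}, all edges of $\tri$ satisfy \eqref{eq:local_canonical_condition} exactly when $\tri$ is a canonical triangulation for $\omega$, that is, when $\tri$ refines $\tri_{\surf_g}^{\omega}$; this is the definition of $\omega\in\mathcal{D}_{\tri}(\surf_g)$. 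Therefore $h'\mapsto(\ee^{h'_i})_{i\in\verts}$ restricts to a homeomorphism $\mathcal{P}_{\tri}(\surf_g)\to\mathcal{D}_{\tri}(\surf_g)$, and composing the two maps proves the proposition.

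The substantive geometric content is already contained in \lemref{lemma:local_delaunay_convexity_equivalence}, so the remaining work is essentially bookkeeping: verifying that all three local conditions match edge-by-edge and that the global notions (\enquote{is weighted Delaunay}, \enquote{is convex}, \enquote{refines $\tri_{\surf_g}^{\omega}$}) are each equivalent to the conjunction of their local counterparts over all edges. The main point requiring care is the last of these equivalences on the hyperbolic side---that refining the canonical tessellation is equivalent to every edge being locally canonical---which is supplied by \propref{prop:flip_algo_hyerpbolic}; one also has to note that the construction is independent of the auxiliary horosphere up to the common scaling that relates all three sets (cf. \remref{remark:change_of_auxiliary_horospheres} and \propref{prop:space_of_log_factors}).
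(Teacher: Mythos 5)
Your proposal is correct and follows essentially the same route as the paper: both reduce the statement to \lemref{lemma:local_delaunay_convexity_equivalence} combined with \propref{prop:euclidean_flip_algoritm} and \propref{prop:flip_algo_hyerpbolic}, which turn the edge-by-edge equivalences into the global ones. You merely make explicit the two underlying maps ($u\mapsto h-u$ via \teqref{eq:height_to_log_factors} and $h'\mapsto\ee^{h'}$), which the paper's two-sentence proof leaves implicit.
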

\begin{proof}
   \lemref{lemma:local_delaunay_convexity_equivalence} and
   \propref{prop:euclidean_flip_algoritm} show that $\tri$ is a weighted Delaunay
   triangulation if and only if the corresponding polyhedral cusp is convex.
   Similarly, \lemref{lemma:local_delaunay_convexity_equivalence} and
   \propref{prop:flip_algo_hyerpbolic} show that a polyhedral cusp is convex
   if and only if its heights define weights on $\surf_g$ such that $\tri$ is a
   canonical tessellation.
\end{proof}

\begin{corollary}
   Let $\surf_g$ be a hyperbolic surface. For each $h\in\RR^{\verts}$ there is
   a unique convex polyhedral cusp $P_h$ over $\surf_g$ with the heights $h$.
\end{corollary}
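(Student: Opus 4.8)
The plan is to reduce the corollary to the uniqueness of canonical tessellations (\propref{prop:properties_of_weightings} item \ref{item:tessellation_scale_invariance}) by translating the heights $h$ into weights on $\surf_g$ and convexity into the local canonical condition \eqref{eq:local_canonical_condition}. Given $h\in\RR^{\verts}$, I would pass to the weights $\omega\coloneqq\ee^{h}\in\RR_{>0}^{\verts}$, using that the $\ee^{h_i}$ are exactly the weights defining the decoration of $\surf_g$ via \eqref{eq:heights_to_hyperbolic_radii} (here the auxiliary horosphere at $\infty$ is fixed to $\{z=1\}$, which removes the scaling ambiguity of \remref{remark:change_of_auxiliary_horospheres}). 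By \propref{prop:properties_of_weightings} item \ref{item:tessellation_scale_invariance} these weights induce a unique canonical tessellation $\tri_{\surf_g}^{\omega}$, and item (iv) of the same proposition ensures that such a tessellation exists for \emph{every} $\omega\in\RR_{>0}^{\verts}$.

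For existence I would choose any triangular refinement $\tri$ of $\tri_{\surf_g}^{\omega}$ and erect, over each face, the hyperideal horoprism whose lower face realizes the associated hyperideal triangle of $\surf_g$ and whose vertex heights are the prescribed $h_i$. Since $\surf_g$ carries a genuine hyperbolic metric compatible with $\tri$, these horoprisms glue consistently along their vertical faces, pinning down a polyhedral cusp $P_h$ over $(\surf_g,\tri)$ as a geometric object. Because $\tri$ refines $\tri_{\surf_g}^{\omega}$, every edge of $\tri$ satisfies \eqref{eq:local_canonical_condition} by \propref{prop:flip_algo_hyerpbolic}; the equivalence of items \ref{item:delaunay_to_convexity_prisim} and \ref{item:delaunay_to_convexity_hyperbolic} in \lemref{lemma:local_delaunay_convexity_equivalence} then reads this as convexity of the corresponding edge, so $P_h$ is convex.

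For uniqueness I would show that any convex polyhedral cusp $P$ over $\surf_g$ with heights $h$ carries the combinatorics of a canonical triangulation for $\omega=\ee^{h}$. Indeed, its triangulation $\tri'$ has only convex edges, $\alpha_{ij}^k+\alpha_{ij}^l\leq\pi$, and \lemref{lemma:local_delaunay_convexity_equivalence} converts this into the local canonical condition for $\omega$, whence \propref{prop:flip_algo_hyerpbolic} identifies $\tri'$ as a canonical triangulation with respect to $\omega$. Because the underlying canonical tessellation $\tri_{\surf_g}^{\omega}$ is unique, $\tri'$ and the refinement $\tri$ chosen above differ only on edges where \eqref{eq:local_canonical_condition} holds with equality, \ie, on flat edges of dihedral angle $\pi$, which are not genuine edges of the polyhedron. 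Hence $P$ and $P_h$ coincide as geometric cusps.

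The main obstacle I anticipate is not geometric but a matter of well-definedness: $\mathcal{P}_{\tri}(\surf_g)$ is indexed by a triangulation, while the corollary asserts a single unindexed cusp. I therefore need to check that, under the homeomorphism of \propref{prop:correspondence_spaces} realized by $h\mapsto\ee^{h}$, one has $\mathcal{P}_{\tri}(\surf_g)=\ln\mathcal{D}_{\tri}(\surf_g)$, so that $\bigcup_{\tri}\mathcal{P}_{\tri}(\surf_g)=\RR^{\verts}$ (using item (iv) once more) and distinct refinements overlap only along flat edges. Once this identification is secured, the construction is independent of the chosen refinement, and the result follows at once from the uniqueness of canonical tessellations.
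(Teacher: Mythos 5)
Your proposal follows the paper's own (implicit) argument: the corollary is stated without a separate proof as an immediate consequence of \propref{prop:correspondence_spaces} together with \propref{prop:properties_of_weightings}, which is exactly your reduction --- pass to the weights $\ee^{h}$, take the unique canonical tessellation they induce, and use the identification $\mathcal{P}_{\tri}(\surf_g)=\ln\mathcal{D}_{\tri}(\surf_g)$ together with $\bigcup_{\tri}\mathcal{D}_{\tri}(\surf_g)=\RR_{>0}^{\verts}$. The one step you assert rather than justify --- that a hyperideal horoprism with prescribed lower face and prescribed heights can actually be \emph{erected} over every face (for a non-canonical triangle this can genuinely fail, since the induced lengths $\len_{ij}^2=r_i^2+r_j^2+2r_ir_j\cosh\lambda_{ij}$ need not satisfy the euclidean triangle inequality) --- is precisely the surjectivity hidden in the homeomorphism of \propref{prop:correspondence_spaces}, which the paper likewise invokes rather than re-proves at this point.
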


The definition of discrete conformal equivalence in terms of the fundamental
discrete conformal invariant can also be reformulated using sequences of weighted
Delaunay triangulations.

\begin{propdef}\label{prop:dce_surgery_to_hyperbolic}
   Two decorated PE-metrics $(\dist_{\eucsurf_g}, r)$ and
   $(\widetilde{\dist}_{\eucsurf_g}, \tilde{r})$ on
   the marked surface $(\eucsurf_g, \verts)$ are
   \emph{discrete conformally equivalent} in the sense of
   \defref{def:dce_decoration_hyperbolic_version} if and only if there is a
   sequence of triangulated decorated PE-surfaces
   \begin{equation}
      (\tri^0,\len^0,r^0), \dotsc , (\tri^N,\len^N,r^N)
   \end{equation}
   such that:
   \begin{enumerate}[label=(\roman*)]
      \item
         the PE-metric of $(\tri^0,\len^0)$ is $\dist_{\eucsurf_g}$ and
         the PE-metric of $(\tri^N,\len^N)$ is $\widetilde{\dist}_{\eucsurf_g}$,
      \item
         each $\tri^n$ is a weighted Delaunay triangulation
         of the decorated PE-surface $(\tri^n, \len^n, r^n)$,
      \item
         if $\tri^n=\tri^{n+1}$, then there are logarithmic scale factors
         $u\in\RR^{\verts}$ such that $(\tri^n,\len^n, r^n)$ and
         $(\tri^{n+1}, \len^{n+1}, r^{n+1})$ are related via the equations
         given in \propref{prop:conformal_change},
      \item
         if $\tri^n\neq\tri^{n+1}$, then $\tri^n$ and $\tri^{n+1}$ are
         two different weighted Delaunay triangulations of the same decorated
         PE-surface.
   \end{enumerate}
\end{propdef}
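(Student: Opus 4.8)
The plan is to prove both implications, bridging the decorated PE-side and the hyperbolic side through \propref{prop:correspondence_spaces} and the height-to-scale-factor relation \eqref{eq:height_to_log_factors}. For the implication from the existence of a sequence to discrete conformal equivalence, I would show that the fundamental discrete conformal invariant is the same for every member of the sequence. A step of type~(iii) fixes the triangulation and relates the two decorated PE-surfaces by logarithmic scale factors, so by \propref{prop:fixed_combi_dce_via_hyperbolic} they induce the same $\surf_g$. A step of type~(iv) fixes the decorated PE-metric and only exchanges two of its weighted Delaunay triangulations; the discussion preceding \defref{def:dce_decoration_hyperbolic_version}, which rests on \lemref{lemma:local_delaunay_convexity_equivalence}, shows that any two weighted Delaunay triangulations of one decorated PE-metric induce the same $\surf_g$. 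Hence the invariant is constant along the sequence, so its endpoints $\dist_{\eucsurf_g}$ and $\widetilde{\dist}_{\eucsurf_g}$ share the same $\surf_g$ and are discrete conformally equivalent by \defref{def:dce_decoration_hyperbolic_version}.

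For the converse, suppose both metrics have the same invariant $\surf_g$. By \propref{prop:correspondence_spaces} together with \eqref{eq:radii_to_heights}, the decorated PE-metrics with fundamental invariant $\surf_g$ are in bijection with weight vectors $\omega=\ee^{h}\in\RR_{>0}^{\verts}$ on $\surf_g$, and a weighted Delaunay triangulation on the PE-side corresponds to a canonical triangulation on the hyperbolic side. Moreover, \eqref{eq:height_to_log_factors} says precisely that passing from weights $\omega$ to weights $\omega'$ inside a single cell $\mathcal{D}_{\tri}(\surf_g)$ changes the associated decorated PE-metric by the logarithmic scale factors $u_i=\ln\omega_i-\ln\omega'_i$, with $\tri$ staying weighted Delaunay; this is exactly a step of type~(iii).

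Let $\omega^0,\omega^N$ be the weights of the two given metrics and join them by the straight segment in the convex set $\RR_{>0}^{\verts}$. By \propref{prop:properties_of_weightings} this set is the union of finitely many cells $\mathcal{D}_{\tri_n}(\surf_g)$, each convex (item~\ref{item:canonical_cell_characterization}); hence the segment meets each in a subsegment, and ordering these along the segment yields a finite chain of triangulations $\tri_{[0]},\dots,\tri_{[M]}$ and parameters $0=t_0<\dots<t_M=1$ so that the $m$-th piece of the segment lies in $\mathcal{D}_{\tri_{[m]}}(\surf_g)$ and consecutive pieces meet at the junction $t_m\in\mathcal{D}_{\tri_{[m]}}(\surf_g)\cap\mathcal{D}_{\tri_{[m+1]}}(\surf_g)$.

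Interleaving now produces the sequence: along the $m$-th piece the weights vary inside one cell, a type-(iii) step, while at the junction $t_m$ the single decorated PE-metric represented there admits both $\tri_{[m]}$ and $\tri_{[m+1]}$ as weighted Delaunay triangulations, a type-(iv) step. The chain begins at $(\dist_{\eucsurf_g},r)$ and ends at $(\widetilde{\dist}_{\eucsurf_g},\tilde r)$, as required. The main obstacle is the identification underlying the type-(iii) steps, namely that a continuous motion of the weights within one cell really corresponds to a logarithmic-scale-factor relation between the associated decorated PE-metrics; this is the content of the homeomorphism $\mathcal{C}_{\tri}(\dist_{\eucsurf_g},r)\cong\mathcal{D}_{\tri}(\surf_g)$ from \propref{prop:correspondence_spaces} read through \eqref{eq:height_to_log_factors}. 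Since condition~(iv) allows the triangulations at a junction to differ arbitrarily, no single-flip analysis is needed; should one want each type-(iv) step to be a single edge flip---matching the surgery terminology---one perturbs the segment into general position so that it crosses the polyhedral complex only through codimension-one walls, each a single quadrilateral face and hence a single flip.
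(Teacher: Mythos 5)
Your proof is correct and follows the same route the paper indicates: the paper's own proof is a one-line combination of \propref{prop:fixed_combi_dce_via_hyperbolic} and \lemref{lemma:local_delaunay_convexity_equivalence}, which is exactly what your forward direction uses, and your converse fills in the detail the paper leaves implicit by running a segment through the finite polyhedral decomposition of the weight space from \propref{prop:properties_of_weightings} and translating back via \propref{prop:correspondence_spaces}. The elaboration is sound; no gap.
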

\begin{proof}
   This is a combination of \propref{prop:fixed_combi_dce_via_hyperbolic}
   and \lemref{lemma:local_delaunay_convexity_equivalence}.
\end{proof}

\begin{remark}
   In the case that $r_i=0$ for all $i\in\verts$, a definition of discrete
   conformal equivalence with respect to a sequence of Delaunay triangulations
   was first given by \textsc{X.~Gu} \etal\ \cite{GLS+2018}. Note that
   \propref{prop:dce_surgery_to_hyperbolic} above is a direct extension of
   \textsc{B.~Springborn}'s reformulation of \textsc{Gu} \etal's definition
   \cite[Prop.~10.4]{Springborn2020}.
\end{remark}

\subsection{The volume of hyperbolic tetrahedra}
\label{sec:volume_hyperbolic_tetrahedra}
The final ingredient for finding the integral of the decorated $\theta$-flow
(\teqref{eq:decorated_flow}) is a relationship between the dihedral angles of
hyperbolic polyhedra and their volumes \cite[Sec.~2]{Kellerhals1989}.

\begin{proposition}[Schl\"afli's differential formula]
   \label{prop:schlaflis_differential_formula}
   The differential of the volume $\vol$ on the space of (compact)
   convex hyperbolic polyhedra with fixed combinatorics is
   \begin{equation}\label{eq:schlaflis_differential_formula}
      \diff{}{\vol}
      \;=\;
      -\frac{1}{2}\sum_{ij}\lambda_{ij}\,\diff{}{\alpha_{ij}}.
   \end{equation}
   Here, we take the sum over all edges $ij$ of the polyhedron. For each edge
   $\lambda_{ij}$ denotes its length and $\alpha_{ij}$ the interior dihedral angle
   at it.
\end{proposition}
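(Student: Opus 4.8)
The plan is to prove the formula by a first-variation-of-volume argument combined with the divergence theorem on each face, reducing the global volume change to a sum of local contributions at the edges. First I would fix the combinatorial type and consider a smooth one-parameter family $P_t$ of convex polyhedra in $\mathbb{H}^3$, writing $\dot{(\cdot)}$ for $d/dt$. By the first variation of volume, $\dot{V} = \int_{\partial P} f\, dA$, where $f$ denotes the normal speed of the boundary. On each face $F_k$, which lies in a totally geodesic plane $\Pi_k\cong\mathbb{H}^2$, the motion of $\Pi_k$ through totally geodesic planes is generated by (the restriction of) an ambient Killing field $K_k$ — since the isometry group acts transitively on the space of such planes, every infinitesimal normal variation preserving total geodesy arises this way — so that $f_k = \langle K_k, \nu_k\rangle$ with $\nu_k$ the outward unit normal. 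The key analytic input is that this support function satisfies $\operatorname{Hess}_{\Pi_k} f_k = f_k\,g_{\Pi_k}$, hence $\Delta_{F_k} f_k = 2 f_k$; this expresses that the family of planes stays totally geodesic (equivalently, along a geodesic leaving $\Pi_k$ perpendicularly the normal speed grows like $\cosh$, and $\Delta_{\mathbb{H}^2}\cosh\rho = 2\cosh\rho$).

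With this in hand, the divergence theorem on each face gives
\[
   \int_{F_k} f_k\, dA
   \;=\; \tfrac12\int_{F_k}\Delta f_k\, dA
   \;=\; \tfrac12\int_{\partial F_k}\partial_{n} f_k\, ds,
\]
where $\partial_n$ is the outward co-normal derivative within $F_k$. Summing over all faces and regrouping the boundary integrals by edges, each edge $ij$ — shared by exactly two faces $F_k$ and $F_l$ — contributes $\tfrac12\int_{ij}(\partial_n f_k + \partial_n f_l)\, ds$. The heart of the proof is the local identity $\partial_n f_k + \partial_n f_l = -\dot\alpha_{ij}$ along the edge: setting up an orthonormal frame in the plane perpendicular to the (geodesic) edge, one expresses $\nu_k,\nu_l$ and the interior co-normals through the dihedral angle $\alpha_{ij}$, uses that $\nabla\nu_k = 0$ along the totally geodesic face together with the skew-symmetry of $\nabla K_k$ for Killing fields, and checks that the combined co-normal derivatives measure precisely the relative infinitesimal rotation of the two face-planes about the edge. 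Granting this, and using that $\alpha_{ij}$ is constant along the geodesic edge of length $\lambda_{ij}$,
\[
   \dot{V}
   \;=\; \tfrac12\sum_{ij}\int_{ij}\big(\partial_n f_k + \partial_n f_l\big)\, ds
   \;=\; -\tfrac12\sum_{ij}\lambda_{ij}\,\dot\alpha_{ij},
\]
which is the asserted formula.

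I expect the main obstacle to be exactly the local edge identity $\partial_n f_k + \partial_n f_l = -\dot\alpha_{ij}$: it requires careful frame bookkeeping and sign conventions to confirm that the two outward co-normal derivatives add up to \emph{minus} the rate of change of the dihedral angle, rather than, say, $\pm\dot\alpha_{ij}$ or $\pm\tfrac12\dot\alpha_{ij}$. A convenient way to isolate this difficulty would be a preliminary reduction: subdivide $P_t$ smoothly into tetrahedra and observe that the formula is additive, since the dihedral angles around any interior edge sum to $2\pi$ (and around an edge interior to a face to $\pi$), so those differentials cancel and only the original edges survive. It then suffices to verify the edge identity for a single tetrahedron, where the perpendicular cross-section is easiest to normalize and the Killing fields $K_k$ can be written out explicitly.
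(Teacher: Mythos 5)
The paper does not prove this proposition at all: it is quoted as a classical result with a citation to Kellerhals, and the only original content nearby is the observation (in the following paragraph of the paper) that the formula persists for ideal and hyperideal vertices. So there is nothing in the paper to compare your argument against step by step; what you have written is the standard first-variation proof of Schl\"afli's formula (as in Milnor's account, or Rivin--Schlenker and Souam in the smooth setting), and in outline it is correct. The ingredients all check out: the first variation of volume by the normal speed; the fact that an infinitesimal motion through totally geodesic planes is induced by a Killing field, so that $f_k$ is the restriction to $\Pi_k$ of a Minkowski-linear function in the hyperboloid model and hence satisfies $\operatorname{Hess} f_k = f_k\,g$, \ie\ $\Delta f_k = 2f_k$ (this is where the factor $\tfrac12$ and, via the sign of the curvature, the minus sign originate); the divergence theorem on each compact face; and the regrouping by edges. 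The one step you assert rather than derive is the local identity $\partial_n f_k + \partial_n f_l = -\dot\alpha_{ij}$ along an edge, and you are right that this is where all the remaining content sits -- it is a finite computation in the totally geodesic $\HH^2$ orthogonal to the edge, and the identity as you state it (including the sign and the absence of a factor $\tfrac12$) is the correct one. Your closing reduction to tetrahedra via additivity of both sides is also sound and is a reasonable way to confine the frame bookkeeping to a single normalized configuration. Two small cautions: the proposition is stated for compact polyhedra, so all your integrals converge, but the paper actually applies the formula to horoprisms with ideal and hyperideal vertices, where a truncation argument (sketched in the paper after the proposition) is still needed; and in the subdivision step one should check that the combinatorics of the subdivision can be kept fixed along the deformation, which is possible locally since the family is smooth.
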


The volumes of hyperbolic polyhedra remain finite if we allow them to have ideal
vertices. Moreover, Schl\"afli's differential formula still holds
\cite{Milnor1994b}. The edge-lengths are now considered with respect to some
auxiliary horospheres at the ideal vertices (see
\secref{sec:lift_to_hyperbolic_space}). Should the polyhedron have hyperideal
vertices, its volume is no longer finite. Instead, we consider its
\emph{truncated volume}, \ie, the volume obtained by truncating the
polyhedron with the hyperbolic planes corresponding to the hyperideal vertices.
Since these planes remain orthogonal to their adjacent faces if we vary the
hyperideal vertices, the terms in \teqref{eq:schlaflis_differential_formula}
corresponding to these angles vanish. Hence, Schl\"afli's differential formula
also applies to polyhedra with hyperideal vertices.

Unlike their euclidean counterparts, hyperbolic volumes are notoriously hard to
compute explicitly, even if we only consider hyperbolic tetrahedra
\cite{Milnor1994a,Ushijima2006}. Still, in the case of hyperideal horoprisms,
\textsc{B.~Springborn} found a fairly simple formula \cite[Sec.~7]{Springborn2008}
which will present us with an alternative way to prove the uniqueness-part of
\propref{prop:solution_to_conformal_mapping_problems}.
This formula only involves
\emph{Milnor's Lobachevsky function} \cite{Milnor1982}, \ie,
\begin{equation}\label{eq:milnor_lobachevsky_function}
   \lob(x)
   \;\coloneqq\;
   -\int_0^x\log|2\sin\xi|\,\diff{}{\xi}.
\end{equation}
It is a $\pi$-periodic, continuous and odd function which is smooth every-where
except at integer multiples of $\pi$ (see \figref{fig:milnor_lobachevsky_function}).
Furthermore, it is almost \emph{Clausen's integral} \cite{Lewin1981},
$\mathrm{Cl}_2(x)=2\lob(\nicefrac{x}{2})$.

\begin{figure}[h]
   \centering
   \begin{tikzpicture}
      \begin{axis}[
         standard,
         yscale=0.7,
         every axis x label/.style={at={(1,0.77)},anchor=north west},
         every axis y label/.style={at={(0.055,1.6)},anchor=north},
         xlabel = {$x$},
         ylabel = {$\lob(x)$},
         xticklabels={
            $\nicefrac{\pi}{6}$,
            $\nicefrac{\pi}{2}$,
            $\nicefrac{5\pi}{6}$,
            $\pi$
         },
         xtick = {
            0.5235987755982988,
            1.5707963267948966,
            2.6179938779914944,
            3.141592653589793
         },
         ymin = -0.5, ymax = 0.5,
         minor y tick num = 1,
      ]
         \addplot[black] table {data/lob.dat};
      \end{axis}
   \end{tikzpicture}
   \caption{A plot of Milnor's Lobachevsky function.}
   \label{fig:milnor_lobachevsky_function}
\end{figure}
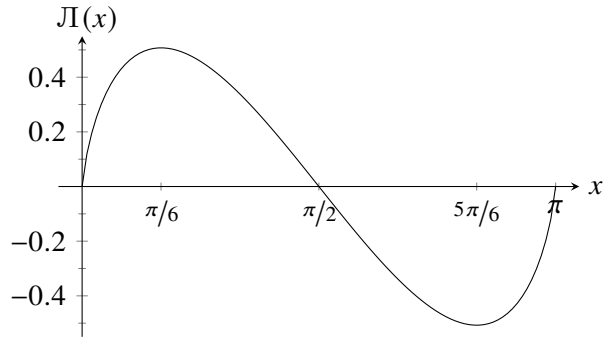

Now, the truncated volume $\vol(T)$ of a hyperideal horoprism $T$ over the
decorated triangle with vertices $\{1,2,3\}$ can be explicitly written as:
\begin{equation}
   2\vol(T)
   \;=\;
   \sum_{i=1}^3\big(
      \lob(\gamma_i) + \lob(\gamma_i') + \lob(\gamma_i'') + \lob(\mu_i) + \lob(\nu_i)
   \big).
\end{equation}
Here,
\begin{gather}
   \gamma_i
   \coloneqq\theta_{jk}^i,
   \qquad
   \gamma_i'
   \coloneqq\frac{\pi+\alpha_{ki}^j-\alpha_{ij}^k-\theta_{jk}^i}{2},
   \qquad
   \gamma_i''
   \coloneqq\frac{\pi-\alpha_{ki}^j+\alpha_{ij}^k-\theta_{jk}^i}{2},\\
   \mu_i
   \coloneqq\frac{\pi+\alpha_{ki}^j+\alpha_{ij}^k-\theta_{jk}^i}{2},
   \qquad
   \nu_i
   \coloneqq\frac{\pi-\alpha_{ki}^j-\alpha_{ij}^k-\theta_{jk}^i}{2}.
\end{gather}
for  permutations $(i,j,k)$ of $(1,2,3)$.

\begin{remark}\label{remark:volume_by_ideal_tetrahedra}
   We can group these angles in triples such that they sum to $\pi$, respectively.
   One such grouping is: $(\gamma_1',\gamma_2',\gamma_3')$,
   $(\gamma_1'',\gamma_2'',\gamma_3'')$ and $(\gamma_i, \mu_i, \nu_i)$ for $i=1,2,3$.
   Hence, each triple, say $(\alpha, \beta, \gamma)$, defines an ideal
   tetrahedron whose volume is given by $\lob(\alpha)+\lob(\beta)+\lob(\gamma)$
   (see \cite[Lem.~2]{Milnor1982}). Therefore, the truncated volume can be written
   as the sum of volumes of five ideal tetrahedra. It seems that a
   geometric explanation for this remarkable fact has yet to be found (see
   comments in \cite[Sec.~5.2]{Springborn2008}).
\end{remark}

\subsection{The variational principle}
\label{sec:the_variational_principle}
Let $\surf_g$ be a hyperbolic surface. For $\Theta\in\RR^{\verts}$ the
\emph{discrete Hilbert--Einstein functional (dHE-functional) over $\RR^{\verts}$}
is given by
\begin{equation}\label{eq:euclidean_he_functional}
   \begin{split}
      \HE_{\surf_g,\Theta}(h)
      &\;\coloneqq\;
      \HE_{\surf_g,\Theta,\tri}(h)\\
      &\;\coloneqq\;
      -2\vol(P_h)
      \,+\, \sum_{i\in\verts}(\Theta_i-\theta_v)h_i
      \,+\, \sum_{ij\in\edges_\tri}(\pi-\alpha_{ij})\lambda_{ij}.
   \end{split}
\end{equation}
Here, $P_h$ is the convex polyhedral cusp defined by the heights $h\in\RR^{\verts}$
and $\tri$ is a canonical triangulation corresponding to the weights $\ee^{h_i}$.

\begin{proposition}[Properties of the dHE-functional]
   \label{prop:euclidean_he_functional_properties}
   Let $\surf_g$ be a genus $g$ hyperbolic surface and $\Theta\in\RR_{>0}^{\verts}$.
   \begin{enumerate}[label=(\roman*)]
      \item\label{item:euclidean_he_functional_differentiability}
         The dHE-functional $\HE_{\surf_g,\Theta}$ is concave, twice continuously
         differentiable over $\RR^{\verts}$ and analytic in each
         $\mathcal{P}_{\tri}(\surf_g)$.
      \item\label{item:euclidean_he_functional_gradient_flow}
         The decorated $\Theta$-flow \eqref{eq:decorated_flow} is the gradient
         flow of $\HE_{\surf_g,\Theta}$.
      \item\label{item:euclidean_he_functional_shift_invariance}
         If $\Theta$ satisfies the Gau{\ss}--Bonnet condition
         \begin{equation}
            \frac{1}{2\pi}\sum\Theta_i
            \;=\;
            2g-2\,+\,|\verts|,
         \end{equation}
         then the dHE-functional $\HE_{\surf_g,\Theta}$ is \emph{shift-invariant}, \ie,
         for any $c\in\RR$
         \begin{equation}\label{eq:euclidean_he_functional_shift_invariance}
            \HE_{\surf_g,\Theta}(h+c\bm{1}_{\verts})
            \;=\;
            \HE_{\surf_g,\Theta}(h),
         \end{equation}
         where $\bm{1}_{\verts}\in\RR^{\verts}$ is the constant vector.
         Furthermore, the restriction of $\HE_{\surf_g,\Theta}$ to
         $\{h\in\RR^{\verts} : \sum h_i = 0\}$ is strictly concave and
         coercive, \ie,
         \begin{equation}\label{eq:euclidean_he_functional_coercivity}
            \lim_{\|h\|\to\infty}\HE_{\surf_g,\Theta}(h)
            \,=\,
            -\infty.
         \end{equation}
   \end{enumerate}
\end{proposition}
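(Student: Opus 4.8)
The plan is to prove everything first on the interior of a single top-dimensional cell $\mathcal{P}_{\tri}(\surf_g)$ of the decomposition from \propref{prop:properties_of_weightings}, where the canonical triangulation $\tri$ is fixed, and then to patch across the walls separating cells. On such an interior the truncated volume $\vol(P_h)$ is analytic in $h$ through the explicit Lobachevsky-function formula of \secref{sec:volume_hyperbolic_tetrahedra}, and the angles $\theta_i$, $\alpha_{ij}$ are analytic in $h$; hence $\HE_{\surf_g,\Theta}$ is analytic there. The first key step is to differentiate \teqref{eq:euclidean_he_functional}. I would apply Schl\"afli's formula (\propref{prop:schlaflis_differential_formula}) to the convex polyhedral cusp $P_h$, whose edges are the vertical edges over the vertices -- carrying the cone-angles $\theta_i$ and the heights $h_i$ -- and the lower edges $ij$ -- carrying the dihedral angles $\alpha_{ij}$ and the lengths $\lambda_{ij}$, which are intrinsic to the fixed surface $\surf_g$ and therefore constant in $h$. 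The hyperideal truncation terms drop out as explained in \secref{sec:volume_hyperbolic_tetrahedra}. The variation of $-2\vol(P_h)$ then cancels against the variations of the two explicit sums in \teqref{eq:euclidean_he_functional} (the $h_i\,\diff{}{\theta_i}$ and $\lambda_{ij}\,\diff{}{\alpha_{ij}}$ terms annihilate each other), and the surviving first-order term is the discrete curvature, giving $\nabla\HE_{\surf_g,\Theta}(h)=(\theta_i-\Theta_i)_{i\in\verts}$ (the overall sign being fixed by the orientation conventions for the heights and dihedral angles). Comparing with \teqref{eq:decorated_flow} and using $u=h-h(0)$ establishes item \ref{item:euclidean_he_functional_gradient_flow} at once.

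Differentiating once more, the Hessian of $\HE_{\surf_g,\Theta}$ on each cell equals the Jacobian $(\partial\theta_i/\partial h_j)$, which by \lemref{lemma:angle_jacobian_formula} is the negative semi-definite form \eqref{eq:explicit_formula_jacobian}. For the regularity in item \ref{item:euclidean_he_functional_differentiability} it remains to patch across a wall, where two canonical triangulations differ by a flip of an edge $ij$ with $\alpha_{ij}^{k}+\alpha_{ij}^{l}=\pi$. Continuity of the angles gives that $\HE_{\surf_g,\Theta}$ and its gradient match across the wall; for the second derivatives one invokes \teqref{eq:alternative_cotan_weight_formula}, which shows that the weight $\cotw_{ij}=\cot\alpha_{ij}^k+\cot\alpha_{ij}^l$ of the flipped edge vanishes on the wall. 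Hence the flipped edge contributes nothing to \eqref{eq:explicit_formula_jacobian} from either side and the two Hessians agree. This makes $\HE_{\surf_g,\Theta}$ globally $C^2$, and a $C^2$ function with everywhere negative semi-definite Hessian is concave, completing item \ref{item:euclidean_he_functional_differentiability}.

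For the shift-invariance in item \ref{item:euclidean_he_functional_shift_invariance} I would use that $h\mapsto h+c\bm{1}_{\verts}$ corresponds, by \remref{remark:change_of_auxiliary_horospheres}, to moving the auxiliary horosphere at $\infty$, i.e.\ to a constant rescaling of the associated decorated PE-surface. Under this operation the angles $\theta_i$, $\alpha_{ij}$ and the triangulation $\tri$ are unchanged (scale invariance, \propref{prop:properties_of_weightings}\ref{item:tessellation_scale_invariance}) and the lengths $\lambda_{ij}$ are unchanged since $\surf_g$ is fixed; applying Schl\"afli's formula along the path $t\mapsto h+t\bm{1}_{\verts}$, where all $\diff{}{\theta_i}$ and $\diff{}{\alpha_{ij}}$ vanish, shows that $\vol(P_h)$ is likewise unchanged. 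Consequently $\HE_{\surf_g,\Theta}(h+c\bm{1}_{\verts})-\HE_{\surf_g,\Theta}(h)=c\sum_i(\Theta_i-\theta_i)$, and this vanishes because the discrete Gau{\ss}--Bonnet theorem gives $\sum_i\theta_i=2\pi(2g-2+|\verts|)=\sum_i\Theta_i$ under \teqref{eq:euclidean_gauss_bonnet_condition}. This proves \teqref{eq:euclidean_he_functional_shift_invariance}.

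By shift-invariance it suffices to study $\HE_{\surf_g,\Theta}$ on $H=\{h:\sum h_i=0\}$. Strict concavity on $H$ follows once the kernel of the Hessian is identified with exactly $\RR\bm{1}_{\verts}$: the form \eqref{eq:explicit_formula_jacobian} is a weighted graph Laplacian with nonnegative weights, so its kernel consists of functions constant on the components of the subgraph of positively weighted edges, which is connected because $\surf_g$ is; the only subtlety is ruling out affine segments lying entirely in flip walls, handled by the same connectivity together with global concavity. The remaining and, I expect, hardest part is the coercivity \teqref{eq:euclidean_he_functional_coercivity}. The plan is to exploit that there are only finitely many cells (\propref{prop:properties_of_weightings}), so every ray in $H$ eventually enters a fixed cone $\mathcal{P}_{\tri}(\surf_g)$ on which $\HE_{\surf_g,\Theta}$ is analytic; along such a ray $h=tv$ the asymptotic slope is $\langle\theta^{\infty}-\Theta,v\rangle$, where $\theta^{\infty}$ is the limiting cone-angle vector, and one must show it is strictly negative for every $v\in H\setminus\{0\}$. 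Controlling $\theta^{\infty}$ requires understanding the degeneration of the canonical tessellation as the normalized weights approach $\partial\Delta^{|\verts|-1}$; here \lemref{lemma:boundary_of_config_space} is the essential input, since it forbids an edge between two vertices whose weights degenerate simultaneously and thereby pins down which angles survive in the limit. Adapting the coercivity estimates known in the undecorated setting \cite{GLS+2018,Springborn2020} to the hyperideal truncated volume is the main obstacle of the proof.
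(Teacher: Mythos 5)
Your treatment of items \ref{item:euclidean_he_functional_differentiability}, \ref{item:euclidean_he_functional_gradient_flow}, and the shift-invariance and strict-concavity parts of \ref{item:euclidean_he_functional_shift_invariance} follows essentially the same route as the paper: Schl\"afli's formula applied to the polyhedral cusp (vertical edges carrying $h_i,\theta_i$, lower edges carrying $\lambda_{ij},\alpha_{ij}$, with $\lambda_{ij}$ constant) cancels the explicit sums and leaves $\diff{}{\HE_{\surf_g,\Theta}}=\sum(\Theta_i-\theta_i)\,\diff{}{h_i}$; $C^2$-regularity across walls comes from $\cotw_{ij}=0$ on a flippable edge; the Hessian is the angle Jacobian of \lemref{lemma:angle_jacobian_formula} with kernel $\RR\bm{1}_{\verts}$ by connectivity of the canonical $1$-skeleton. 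One caveat: you state $\nabla\HE_{\surf_g,\Theta}=(\theta_i-\Theta_i)$ and $u=h-h(0)$, whereas the paper's conventions give $\diff{}{\HE}=\sum(\Theta_i-\theta_i)\diff{}{h_i}$ and $\diff{}{h_i}=-\diff{}{u_i}$ (from \teqref{eq:height_to_log_factors}); your two sign flips cancel, so the conclusion of item \ref{item:euclidean_he_functional_gradient_flow} survives, but you should fix one convention and stick to it.

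The genuine gap is the coercivity \teqref{eq:euclidean_he_functional_coercivity}, which you outline but explicitly leave as ``the main obstacle.'' Your plan (asymptotic slope $\langle\theta^{\infty}-\Theta,v\rangle$ along rays, with \lemref{lemma:boundary_of_config_space} controlling the degeneration) can be made to work, but the decisive analytic step is missing: you must actually show that $\theta_i\to 0$ for every vertex whose (normalized) height diverges to $-\infty$. This is the content of the paper's \lemref{lemma:euclidean_angle_limit}: for $i$ with $h_i^n\to-\infty$ and a triangle $ijk$ incident to $i$, \lemref{lemma:boundary_of_config_space} forbids an edge of $\tri$ joining two degenerating vertices, so $h_j^n,h_k^n$ stay bounded below; then \teqref{eq:height_to_log_factors} and \teqref{eq:conformal_change_l} force $\len_{ij}^n,\len_{ik}^n\to\infty$ while $\len_{jk}^n$ stays bounded, and the law of cosines gives $(\theta^i_{jk})^n\to0$. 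With that in hand the paper normalizes $\tilde{h}_i^n=h_i^n-\max_j h_j^n\le0$, notes $-2\vol\le0$ and that all remaining terms are bounded, and concludes from $\sum_{i\in\verts_\infty}(\Theta_i-\theta_i^n)\tilde{h}_i^n\to-\infty$ since $\Theta_i>0$. You should also address a technical point your ray formulation glosses over: a ray $h=tv$ traces a non-linear curve in weight space $\omega=\ee^{h}$ and need not eventually lie in a single cone $\mathcal{P}_{\tri}(\surf_g)$; the paper handles this by passing to a subsequence on which the canonical tessellation is constant (legitimate by the finiteness in \propref{prop:properties_of_weightings}). Without the angle-degeneration lemma and this bookkeeping, the hardest assertion of the proposition remains unproved.
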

\begin{proof}
   We postpone most of the details to the next section.
   In \lemref{lemma:well_definedness_he_functional} we show that $\HE_{\surf_g,\Theta}$
   is well-defined. The differentiability of $\HE_{\surf_g,\Theta}$ is analysed in
   \lemref{lemma:euclidean_he_functional_total_diff} and
   \lemref{lemma:euclidean_he_functional_hessian}. This proves item
   \ref{item:euclidean_he_functional_differentiability}. In particular,
   item \ref{item:euclidean_he_functional_gradient_flow} is a reformulation of
   \lemref{lemma:euclidean_he_functional_total_diff}.
   Using the definition of the dHE-functional (\teqref{eq:euclidean_he_functional})
   we compute that
   \begin{equation}\label{eq:euclidean_he_functional_shift}
      \HE_{\surf_g,\Theta}(h+c\bm{1}_{\verts})
      \;=\;
      \HE_{\surf_g,\Theta}(h)
      \,-\,
      c\left(2\pi(2g-2\,+\,|\verts|)-\sum\Theta_i\right).
   \end{equation}
   This shows the assertion about the shift-invariance of $\HE_{\surf_g,\Theta}$.
   The strict concavity over $\{h\in\RR^{\verts} : \sum h_i = 0\}$ is a consequence of
   the explicit description of the kernel of the Hessian given in
   \lemref{lemma:euclidean_he_functional_hessian}. The coercivity is discussed
   in \lemref{lemma:euclidean_he_euclidean_coercivity}.
\end{proof}

\begin{remark}
   There is an alternative way to see the concavity of the dHE-functional
   $\HE_{\surf_g,\Theta}$. By \remref{remark:volume_by_ideal_tetrahedra}, the
   volume $\vol$ of a convex polyhedral cusp can be expressed as a sum of volumes
   of ideal tetrahedra. \textsc{I.~Rivin} showed that the volume of an ideal
   tetrahedron is a (strictly) concave function of its dihedral
   angles \cite[Thm.~2.1]{Rivin1994}. Now, the concavity of $\HE_{\surf_g,\Theta}$
   follows from observing that it is the Legendre-transformation of the volume $\vol$.
\end{remark}

\begin{proof}[%
   Proof of \thmref{theorem:realisation_euclidean} (variational principle \&
   uniqueness of realizations).
   ]
   \lemref{lemma:euclidean_he_functional_total_diff}, \ie,
   item \ref{item:euclidean_he_functional_gradient_flow} of
   \propref{prop:euclidean_he_functional_properties},
   shows that realizations of
   $\Theta$ are given by critical points of the dHE-functional $\HE_{\surf_g,\Theta}$.
   They are maximum points as $\HE_{\surf_g,\Theta}$ is concave
   (\propref{prop:euclidean_he_functional_properties} item
   \ref{item:euclidean_he_functional_differentiability}). The only ambiguity
   are constant shifts of the heights of the convex polyhedral cusp given by a
   maximum point (\lemref{lemma:euclidean_he_functional_hessian}). By
   \teqref{eq:height_to_log_factors}, they correspond to constant scalings.
\end{proof}

\begin{proof}[%
   Proof of \thmref{theorem:realisation_euclidean} (existence of realizations)
   ]
   Its left to show that $\HE_{\surf_g,\Theta}$ has a maximum. Should
   $\Theta$ violate the Gau{\ss}--Bonnet condition then the shift behaviour of
   $\HE_{\surf_g,\Theta}$ (\teqref{eq:euclidean_he_functional_shift}) shows that
   it has no maximum. Now, suppose otherwise. Then $\HE_{\surf_g,\Theta}$ is
   coercive on $\{h\in\RR^{\verts} : \sum h_i = 0\}$ and shift-invariant
   (\propref{prop:euclidean_he_functional_properties} item
   \ref{item:euclidean_he_functional_shift_invariance}). This implies that
   \[
      \sup_{h\in\RR^{\verts}}\HE_{\surf_g,\Theta}(h)
      \;=\;
      \sup_{\|h\|\leq M}\HE_{\surf_g,\Theta}(h),
   \]
   for some $M>0$. Since the ball $\{\|h\|\leq M\}$ is compact, $\HE_{\surf_g,\Theta}$
   attains its supremum.
\end{proof}

\subsection{Properties of the discrete Hilbert-Einstein functional}

\begin{lemma}[well-definedness of the dHE-functional]
   \label{lemma:well_definedness_he_functional}
   The dHE-functional is well-defined, that is, if $h\in\RR^{\verts}$ and
   $\tri$ and $\tilde{\tri}$ are triangulations refining the canonical tessellation
   corresponding to the weights $\ee^{h_i}$, then
   \begin{equation}
      \HE_{\surf_g,\Theta,\tri}(h) \;=\; \HE_{\surf_g,\Theta,\tilde{\tri}}(h).
   \end{equation}
\end{lemma}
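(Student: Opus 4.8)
The plan is to reduce the claim to invariance of $\HE_{\surf_g,\Theta,\tri}(h)$ under a single edge flip. By \propref{prop:flip_algo_hyerpbolic}, any two triangulations $\tri$ and $\tilde{\tri}$ refining the same canonical tessellation $\tri_{\surf_g}^{\omega}$ (with $\omega=\ee^{h}$) are connected by a finite sequence of flips, each across an edge satisfying the local canonical condition \eqref{eq:local_canonical_condition} with equality. By \lemref{lemma:local_delaunay_convexity_equivalence}, this equality is equivalent to $\alpha_{ij}^k+\alpha_{ij}^l=\pi$, that is, the flipped edge is a flat edge of the polyhedral cusp. Hence it suffices to show that each of the three terms of $\HE_{\surf_g,\Theta,\tri}$ is unchanged when one flips such a flat diagonal $ij$ to the opposite diagonal $kl$ of the quadrilateral $iljk$.

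The first two terms are, I would argue, manifestly triangulation-independent. The volume $\vol(P_h)$ is the truncated volume of the convex polyhedral cusp $P_h$, which by the corollary to \propref{prop:correspondence_spaces} is the unique such cusp with heights $h$, so it depends only on $\surf_g$ and $h$. Equivalently, the horoprisms over the triangles of any refinement tile $P_h$, and the flat refinement faces bound no volume, so the sum of their truncated volumes equals $\vol(P_h)$ for every refinement. Likewise, the coefficients $\Theta_i-\theta_i$ in the second term involve only the intrinsic cone angles $\theta_i$ of the polyhedral cusp; these are total angles around the vertical edges and are preserved by any flip, while the heights $h_i$ are fixed. So the second term does not change.

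It remains to analyze the edge term $\sum_{ij\in\edges(\tri)}(\pi-\alpha_{ij})\lambda_{ij}$. The key observation is that the flat-edge condition $\alpha_{ij}^k+\alpha_{ij}^l=\pi$ means, by item~\ref{item:euclidean_local_proper_disk} of \lemref{lemma:local_characterization_euc_wdt}, that the proper disk of $ijk$ is orthogonal to the vertex-circle at $l$. Thus the four vertex-circles of the quadrilateral admit a common orthogonal circle $C$, which is simultaneously the face-circle of all four triangles $ijk$, $ilj$, $ikl$, $ljk$. Consequently, for each of the four boundary edges of the quadrilateral the face-circle seen from inside is this same circle $C$ before and after the flip, so its intersection angle, and hence the full dihedral $\alpha$, is unchanged; the boundary $\lambda$-lengths depend only on $\surf_g$ and are fixed as well. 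For the two diagonals, the common circle $C$ gives $\alpha_{ij}^k+\alpha_{ij}^l=\pi$ for the removed edge and, by the same cocircularity applied to the opposite diagonal, $\alpha_{kl}^i+\alpha_{kl}^j=\pi$ for the added edge; both therefore contribute $(\pi-\pi)\lambda=0$. Hence the edge term is invariant, and the functional is well-defined.

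The main obstacle I anticipate is the third paragraph: establishing rigorously that the flat-edge condition forces a single common face-circle $C$ for the entire quadrilateral, and then tracking that this simultaneously keeps the four boundary dihedral angles matched across the flip and forces both diagonals to contribute zero. The first two terms are essentially bookkeeping once one grants the intrinsic, triangulation-free characterization of $\vol(P_h)$ and of the cone angles.
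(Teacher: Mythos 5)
Your proof is correct, but it takes a more local route than the paper's. The paper does not reduce to flips at all: it simply notes that the volume and the cone angles $\theta_i$ are intrinsic to the convex polyhedral cusp $P_h$ (so the first two terms are manifestly refinement-independent), and that any edge of $\tri$ not belonging to $\tilde{\tri}$ lies inside a face of the canonical tessellation, whence $\alpha_{ij}=\pi$ by \lemref{lemma:local_delaunay_convexity_equivalence} and the term $(\pi-\alpha_{ij})\lambda_{ij}$ vanishes; edges common to both triangulations carry the same intrinsic $\alpha_{ij}$ and $\lambda_{ij}$, so the edge sums agree. Your flip-by-flip argument proves the same thing but must additionally track the four boundary dihedral angles of the flipped quadrilateral, which you do via the common orthogonal circle $C$ of the four vertex-circles in the degenerate configuration; that cocircularity argument is sound (it is exactly the equality case of \lemref{lemma:local_characterization_euc_wdt} and \lemref{lemma:local_delaunay_convexity_equivalence}), and in fact the boundary-edge bookkeeping becomes unnecessary once one observes, as the paper does, that the total dihedral angle at an edge of the cusp is intrinsic. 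One small caveat: \propref{prop:flip_algo_hyerpbolic} as stated does not literally assert that two refinements of the same canonical tessellation are connected by flips \emph{through} refinements of that tessellation; you need the standard fact that the triangulations of each polygonal face are flip-connected. This is true and easy, but it is an extra step your reduction silently relies on, and it is precisely the step the paper's global argument avoids.
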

\begin{proof}
   The volume and cone-angles $\theta_i$ are intrinsic quantities of the convex
   polyhedral cusp corresponding to $h$. Furthermore, if $ij$ is an edge of $\tri$
   but not $\tilde{\tri}$, then $ij$ is contained in a face of the canonical
   tessellation corresponding to $\ee^{h_i}$. From
   \lemref{lemma:local_delaunay_convexity_equivalence} it follows,
   that $\alpha_{ij}=\pi$. So this edge is not contributing to
   $\HE_{\surf_g,\Theta,\tri}$.
\end{proof}

\begin{lemma}[first derivative of $\HE_{\surf_g,\Theta}$]
   \label{lemma:euclidean_he_functional_total_diff}
   The derivative of the dHE-functional $\HE_{\surf_g,\Theta}$
   is given by
   \begin{equation}\label{eq:he_functional_total_diff}
      \diff{}{\HE_{\surf_g,\Theta}}
      \;=\; \sum(\Theta_i-\theta_i)\,\diff{}{h_i}.
   \end{equation}
\end{lemma}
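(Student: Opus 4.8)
The plan is to differentiate the three summands of $\HE_{\surf_g,\Theta}$ separately and to observe that every term involving a dihedral angle cancels. Throughout I would fix $h$ in the interior of a cone $\mathcal{P}_{\tri}(\surf_g)$, where the canonical triangulation $\tri$ (hence its edge set and the $\lambda$-lengths) is constant; by \lemref{lemma:well_definedness_he_functional} the functional does not depend on the chosen refinement, and the first-derivative formula I obtain will be continuous, so it extends to all of $\RR^{\verts}$. The decisive structural fact is that $\surf_g$ is fixed: the $\lambda$-lengths $\lambda_{ij}$ are intrinsic to $\surf_g$ and therefore \emph{constant} as $h$ varies, whereas the heights $h_i$, the cone-angles $\theta_i$, and the dihedral angles $\alpha_{ij}$ all depend on $h$.

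First I would compute $\diff{}{\vol(P_h)}$ by applying Schl\"afli's differential formula (\propref{prop:schlaflis_differential_formula}, in the extension to ideal and truncated hyperideal vertices recalled in \secref{sec:volume_hyperbolic_tetrahedra}) to each hyperideal horoprism and summing over $\faces_{\tri}$. In a single horoprism over $ijk$ the varying edges are the three vertical edges, with lengths $h_i,h_j,h_k$ and dihedral angles equal to the corner angles $\theta_{jk}^i,\theta_{ki}^j,\theta_{ij}^k$, together with the three lower edges, with lengths $\lambda_{ij},\lambda_{jk},\lambda_{ki}$ and dihedral angles $\alpha_{ij}^k,\alpha_{jk}^i,\alpha_{ki}^j$; the truncating faces meet their neighbours orthogonally, so the corresponding $\nicefrac{\pi}{2}$-angles are constant and drop out. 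Summing over all faces, the vertical-edge contributions regroup by vertex: the height $h_i$ is common to every horoprism at $i$ and the corner angles there sum to the cone-angle $\theta_i$, giving $\sum_i h_i\diff{}{\theta_i}$; the lower-edge contributions regroup by edge, and since the two angles $\alpha_{ij}^k,\alpha_{ij}^l$ at a shared edge add up to $\alpha_{ij}$, they give $\sum_{ij}\lambda_{ij}\diff{}{\alpha_{ij}}$. Hence
\[
   \diff{}{\big(-2\vol(P_h)\big)}
   \;=\;
   \sum_{i\in\verts} h_i\,\diff{}{\theta_i}
   \,+\,
   \sum_{ij\in\edges(\tri)}\lambda_{ij}\,\diff{}{\alpha_{ij}}.
\]

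Differentiating the two remaining summands is then routine. Since $\Theta$ is constant, the linear term contributes $\sum_i(\Theta_i-\theta_i)\diff{}{h_i}-\sum_i h_i\diff{}{\theta_i}$; and since each $\lambda_{ij}$ is constant (the key point above), the edge term contributes only $-\sum_{ij}\lambda_{ij}\diff{}{\alpha_{ij}}$, the would-be $\diff{}{\lambda_{ij}}$-terms vanishing. Adding the three pieces, the $\sum_i h_i\diff{}{\theta_i}$ terms cancel against the linear term and the $\sum_{ij}\lambda_{ij}\diff{}{\alpha_{ij}}$ terms cancel against the edge term, leaving exactly $\diff{}{\HE_{\surf_g,\Theta}}=\sum(\Theta_i-\theta_i)\diff{}{h_i}$, as claimed.

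The main obstacle is not the algebra but justifying the two inputs to the Schl\"afli step: that the formula applies to the non-compact, horospherically- and hyperideally-truncated horoprisms (which I would take from the extension already invoked in \secref{sec:volume_hyperbolic_tetrahedra}), and the bookkeeping that assembles the per-tetrahedron terms into the global sums — in particular that the heights and $\lambda$-lengths factor out of the regrouped sums, that the orthogonal truncation faces genuinely contribute nothing, and that $\sum_{jk}\theta_{jk}^i=\theta_i$ and $\alpha_{ij}^k+\alpha_{ij}^l=\alpha_{ij}$. Everything else rests on the single observation that $\lambda_{ij}$ is frozen while $h$ varies, which is precisely what makes the dihedral-angle terms cancel.
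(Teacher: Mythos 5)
Your proposal is correct and follows essentially the same route as the paper: differentiate the three summands of $\HE_{\surf_g,\Theta,\tri}$ using that the $\lambda_{ij}$ are intrinsic to the fixed surface $\surf_g$ and hence constant, apply Schl\"afli's differential formula to the polyhedral cusp (vertical edges contributing $\sum_i h_i\,\diff{}{\theta_i}$ and lower edges contributing $\sum_{ij}\lambda_{ij}\,\diff{}{\alpha_{ij}}$, with the orthogonal truncation terms dropping out), and cancel. The paper's proof is terser but identical in substance; your extra care about the per-horoprism bookkeeping and about working inside a cone $\mathcal{P}_{\tri}(\surf_g)$ and invoking \lemref{lemma:well_definedness_he_functional} only makes explicit what the paper leaves implicit.
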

\begin{proof}
   Consider $h\in\RR^{\verts}$ and let $\tri$ be a corresponding
   canonical triangulation for $\ee^{h_i}$. Using that $\lambda_{ij}$ is constant,
   we see that
   \begin{align}
      \diff{}{\HE_{\surf_g,\Theta,\tri}}
      \;=\; -2\diff{}{\vol(P_h)}
         + \sum_{i\in\verts}\big((\Theta_i-\theta_i)\,\diff{}{h_i}
                                 - h_i\,\diff{}{\theta_i}\big)
         - \sum_{ij\in\edges_\tri}\lambda_{ij}\,\diff{}{\alpha_{ij}},
   \end{align}
   After applying Schl\"afli's differential formula
   (\propref{prop:schlaflis_differential_formula}), we obtain the right-hand side
   of \teqref{eq:he_functional_total_diff}. Note that $\theta_i$, and thus
   \teqref{eq:he_functional_total_diff}, does not depend on the canonical
   triangulation chosen.
\end{proof}

\begin{lemma}[second derivative of $\HE_{\surf_g,\Theta}$]
   \label{lemma:euclidean_he_functional_hessian}
   The second derivative of the dHE-functional $\HE_{\surf_g,\Theta}$
   at $h\in\RR^{\verts}$ is given by
   \begin{equation}\label{eq:euclidean_he_functional_hessian}
      \sum_{i,j\in\verts}
         \ppdiff{^2\HE_{\surf_g,\Theta}}{h_i}{h_j}
         \,\diff{}{h_i}\diff{}{h_j}
      \;=\;
      -\frac{1}{2}\sum_{ij\in\edges_\tri}
         \frac{\cotw_{ij}}{\len_{ij}\ee^{h_{ij}}}\,
         \big(\diff{}{h_j}-\diff{}{h_i}\big)^2,
   \end{equation}
   where $\tri$ is a canonical triangulation for the weights $\ee^{h_i}$.
   It is negative semi-definite. In particular, its kernel is 1-dimensional
   and spanned by the constant vector $\bm{1}_{\verts}\in\RR^{\verts}$.
\end{lemma}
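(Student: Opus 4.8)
The plan is to differentiate the gradient already obtained in \lemref{lemma:euclidean_he_functional_total_diff}. Since that lemma gives $\pdiff{\HE_{\surf_g,\Theta}}{h_i} = \Theta_i-\theta_i$, with $\theta_i = \theta_i(h)$ the cone-angle of the decorated PE-surface associated to the convex polyhedral cusp $P_h$, the Hessian is simply $\ppdiff{^2\HE_{\surf_g,\Theta}}{h_i}{h_j} = -\pdiff{\theta_i}{h_j}$. Hence the second-derivative quadratic form equals $-\sum_i \diff{}{\theta_i}\,\diff{}{h_i}$, and the whole task reduces to identifying the angle-Jacobian $\diff{}{\theta_i}$, viewed as a function of the heights $h$, with the expression already computed in \lemref{lemma:angle_jacobian_formula}, which is stated in terms of the logarithmic scale factors $u$.

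The key bookkeeping step is the passage from heights to scale factors. As $h$ varies over $\RR^{\verts}$, all the cusps $P_h$ sit over the same $\surf_g$ and therefore correspond to discrete conformally equivalent decorated PE-surfaces; by $r_i = \ee^{-h_i}$ (\teqref{eq:radii_to_heights}) and \teqref{eq:radii_to_log_factors}, a variation $h\mapsto h+w$ is realised by the log scale factor $u=-w$, that is $\diff{}{u_i} = -\diff{}{h_i}$ (compare \teqref{eq:height_to_log_factors}). Consequently $\pdiff{\theta_i}{h_j} = -\pdiff{\theta_i}{u_j}$, and the two sign reversals cancel, so that $-\sum_i\diff{}{\theta_i}\diff{}{h_i}$ equals the form $\sum_i\diff{}{\theta_i}\diff{}{u_i}$ of \lemref{lemma:angle_jacobian_formula}. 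Because $\tri$ refines the canonical tessellation for the weights $\ee^{h_i}$, \propref{prop:correspondence_spaces} (via \lemref{lemma:local_delaunay_convexity_equivalence}) guarantees that $\tri$ is simultaneously a weighted Delaunay triangulation of the associated decorated PE-metric, so \lemref{lemma:angle_jacobian_formula} applies. Inserting its explicit expression \teqref{eq:explicit_formula_jacobian} and substituting $r_{ij} = \ee^{-h_{ij}}$ then yields exactly the claimed formula \teqref{eq:euclidean_he_functional_hessian}; in particular the signs work out to produce a negative semi-definite form, as concavity demands.

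It remains to pin down the kernel, and this is the step I expect to be the genuine obstacle, since \lemref{lemma:angle_jacobian_formula} only asserts that the kernel is \emph{at least} one-dimensional. Writing the form as $-\tfrac12\sum_{ij} c_{ij}\,(\diff{}{h_j}-\diff{}{h_i})^2$ with $c_{ij} = \cotw_{ij}/(\len_{ij}\ee^{h_{ij}})\geq 0$, a vector $w$ lies in the kernel if and only if $w_i = w_j$ across every edge with $c_{ij}>0$; the negative semi-definiteness is then immediate from $c_{ij}\geq 0$. To control which edges are strictly positive, I would use \teqref{eq:alternative_cotan_weight_formula} together with \lemref{lemma:local_characterization_euc_wdt} and \lemref{lemma:local_delaunay_convexity_equivalence} to show that $\cotw_{ij}=0$ precisely when $\alpha_{ij}^k+\alpha_{ij}^l=\pi$, that is, exactly on those edges of $\tri$ which are diagonals of non-triangular cells of the canonical tessellation, while every remaining (strictly positive) edge is an actual edge of that tessellation. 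Since the canonical tessellation is a connected cell decomposition of the surface with vertex set $\verts$, its $1$-skeleton connects all vertices, forcing $w$ to be constant. Hence the kernel is exactly the line spanned by $\bm{1}_{\verts}$, in agreement with the scale-invariance of the cone-angles noted after \lemref{lemma:angle_jacobian_formula}.
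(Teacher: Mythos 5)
Your proposal is correct and follows essentially the same route as the paper: compute the Hessian as the negative of the angle Jacobian via $\diff{}{h_i}=-\diff{}{u_i}$, invoke \lemref{lemma:angle_jacobian_formula} with the substitution $r_{ij}=\ee^{-h_{ij}}$, and identify the kernel through the connectivity of the $1$-skeleton of the canonical tessellation, whose edges are exactly those with $\cotw_{ij}>0$. The sign bookkeeping and the use of \lemref{lemma:local_delaunay_convexity_equivalence} to handle the ambiguity in the choice of canonical triangulation both match the paper's argument.
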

\begin{proof}
   From \teqref{eq:height_to_log_factors} follows $\diff{}{h_i} = -\diff{}{u_i}$.
   Together with \lemref{lemma:euclidean_he_functional_total_diff} we see that
   \begin{equation}
      \ppdiff{^2\HE_{\surf_g,\Theta}}{h_i}{h_j}
      \;=\;
      -\pdiff{\theta_i}{h_j}
      \;=\;
      \pdiff{\theta_i}{u_j}.
   \end{equation}
   Hence, the Hessian of $\HE_{\surf_g,\Theta}$ coincides with the Jacobian of
   the angles $\theta_i$. So \teqref{eq:euclidean_he_functional_hessian} follows from
   \lemref{lemma:angle_jacobian_formula} and \teqref{eq:height_to_log_factors}.

   To see that formula \eqref{eq:euclidean_he_functional_hessian} does not depend on
   the particular choice of canonical triangulation, observe that $\cotw_{ij}=0$
   if and only if $\alpha_{ij}^k+\alpha_{ij}^l=\pi$. By definition, canonical
   triangulations refine the unique canonical tessellation induced by the weights
   $\ee^{h_i}$. So \lemref{lemma:local_delaunay_convexity_equivalence} implies that
   the Hessian computed for different choices of canonical triangulations coincide.

   Finally, the negative semi-definiteness follows from
   \teqref{eq:euclidean_he_functional_hessian} and the convexity of the corresponding
   polyhedral cusp $M_h$, \ie, $\cotw_{ij}\geq0$. The edges with $\cotw_{ij}>0$
   correspond to the edges of the canonical tessellation induced by $\ee^{h_i}$.
   They form a connected graph since they are the 1-skeleton of a
   cell-decomposition of the surface $\eucsurf_g$. The claim about the kernel follows.
\end{proof}

\begin{lemma}[coercivity of $\HE_{\surf_g,\Theta}$]
   \label{lemma:euclidean_he_euclidean_coercivity}
   Let $\Theta\in\RR_{>0}^{\verts}$ satisfy the Gau{\ss}--Bonnet condition
   \eqref{eq:euclidean_gauss_bonnet_condition}.
   Then $\HE_{\surf_g,\Theta}$ is a coercive functional over
   $\{h\in\RR^{\verts}:\sum h_i=0\}$, \ie,
   \[
      \lim_{\|h\|\to\infty}\HE_{\surf_g,\Theta}(h)
      \,=\,
      -\infty.
   \]
\end{lemma}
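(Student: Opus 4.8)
The plan is to exploit that $\HE_{\surf_g,\Theta}$ is concave and $C^2$ on all of $\RR^{\verts}$ (\propref{prop:euclidean_he_functional_properties}), so that coercivity on $W:=\{h\in\RR^{\verts}:\sum h_i=0\}$ reduces to a one-dimensional statement along rays. First I would record the elementary fact that a concave function tends to $-\infty$ at infinity as soon as it does so along every ray through the origin: if it did not, a sequence $h^{(n)}\in W$ with $\|h^{(n)}\|\to\infty$ and $\HE_{\surf_g,\Theta}(h^{(n)})\geq -C$ would, after passing to a convergent subsequence $h^{(n)}/\|h^{(n)}\|\to v^{\ast}$, force $\HE_{\surf_g,\Theta}(tv^{\ast})\geq\HE_{\surf_g,\Theta}(0)$ for all $t>0$ by writing $tv^{\ast}$ as a convex combination of $0$ and $h^{(n)}$ and using concavity together with continuity. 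Hence it suffices to show that for every $v\in W\setminus\{0\}$ the asymptotic slope
\[
   s_v:=\lim_{t\to\infty}\tdiff{t}\HE_{\surf_g,\Theta}(tv)
\]
is strictly negative; by \lemref{lemma:euclidean_he_functional_total_diff} this derivative equals $\sum_i(\Theta_i-\theta_i(tv))\,v_i$ and is non-increasing in $t$ by concavity, so the limit exists in $[-\infty,\infty)$, and $s_{v^{\ast}}\geq0$ would contradict the above.

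The geometric heart of the argument is the computation of $s_v$ via the degeneration of the decorated metric. Write $m:=\max_i v_i$ and $M:=\{i:v_i=m\}$; since $\sum v_i=0$ and $v\neq0$ we have $M\neq\verts$. In the weight coordinates $\ee^{h_i}=\ee^{tv_i+\mathrm{const}}$ the normalized weights concentrate on $M$ as $t\to\infty$, so any limit weight $\bar\omega$ on the simplex satisfies $\bar\omega_i=0$ for all $i\notin M$. Applying \lemref{lemma:boundary_of_config_space} to any canonical triangulation occurring for arbitrarily large $t$ then shows that such a triangulation has no edge joining two vertices of $\verts\setminus M$. Consequently every neighbour of a vertex $i\notin M$ lies in $M$, so in each triangle $ijk$ incident to such an $i$ the radii $r_j=\ee^{-h_j}$ and $r_k=\ee^{-h_k}$ are exponentially smaller than $r_i$. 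A direct law-of-cosines estimate, using $\len_{ij}^2=r_i^2+r_j^2+2I_{ij}r_ir_j$ with the inversive distances $I_{ij}$ of the fixed surface $\surf_g$, yields $\cos\theta_{jk}^i\to1$, whence $\theta_i(tv)\to0$ for every $i\notin M$.

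Finally I would assemble the slope. The total angle $\sum_i\theta_i=2\pi(2g-2+|\verts|)$ is constant, since it equals $\pi$ times the (combinatorially fixed) number of triangles; combined with $\theta_i\to0$ for $i\notin M$ this forces $\sum_{i\in M}\theta_i(tv)\to2\pi(2g-2+|\verts|)$. Using $v_i\equiv m$ on $M$ and the Gau{\ss}--Bonnet condition \eqref{eq:euclidean_gauss_bonnet_condition}, the limit of $\sum_i(\Theta_i-\theta_i)v_i$ collapses to
\[
   s_v=m\Big(\sum_{i\in M}\Theta_i-2\pi(2g-2+|\verts|)\Big)+\sum_{i\notin M}\Theta_i v_i=\sum_{i\notin M}\Theta_i\,(v_i-m).
\]
As $\Theta_i>0$ and $v_i<m$ for every $i$ in the nonempty set $\verts\setminus M$, this is strictly negative, completing the argument.

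I expect the main obstacle to be the geometric step: making rigorous that only finitely many combinatorial types of canonical triangulation recur as $t\to\infty$ (this uses the finiteness in \propref{prop:properties_of_weightings}), that each recurring type satisfies the hypothesis of \lemref{lemma:boundary_of_config_space}, and that the vertex degrees stay bounded, so that the collapse $\theta_i(tv)\to0$ for $i\notin M$ holds uniformly and irrespective of any residual edge-flipping. Once this combinatorial control near infinity is in place, the remaining analysis — the law-of-cosines limit and the final Gau{\ss}--Bonnet cancellation — is routine.
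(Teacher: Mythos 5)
Your proof is correct, and it shares with the paper the decisive geometric ingredient --- namely that for a vertex whose weight becomes negligible relative to the maximal one, \lemref{lemma:boundary_of_config_space} forces all its neighbours to carry comparable-to-maximal weight, so that by the invariance of the inversive distances and the law of cosines the cone angle at that vertex tends to $0$; this is exactly the content of the paper's \lemref{lemma:euclidean_angle_limit}. Where you genuinely diverge is in the analytic reduction. The paper works with an arbitrary sequence $h^n$ with $\sum h_i^n=0$ and $\|h^n\|\to\infty$, normalizes by $\tilde h^n_i=h^n_i-\max_j h^n_j\le 0$ using shift-invariance, bounds $-2\vol$, the edge term and the terms over $\verts\setminus\verts_\infty$ uniformly, and lets the products $(\Theta_i-\theta_i^n)\tilde h_i^n\to-\infty$ for $i\in\verts_\infty$ do the work; no use is made of concavity. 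You instead use concavity and continuity to reduce coercivity to the behaviour along rays, and then compute the asymptotic slope $s_v=\lim_t\sum_i(\Theta_i-\theta_i(tv))v_i=\sum_{i\notin M}\Theta_i(v_i-m)<0$ via the Gau{\ss}--Bonnet cancellation $\sum_i\theta_i=2\pi(2g-2+|\verts|)$. Your route buys a cleaner endgame (no need to estimate the volume or the $\lambda$-length terms at all, since they disappear after differentiating by Schl\"afli and \lemref{lemma:euclidean_he_functional_total_diff}) and an explicit negative linear growth rate along each direction, at the price of invoking concavity --- which is legitimate here, as it is established independently in \lemref{lemma:euclidean_he_functional_hessian} --- and of the slightly delicate ray-compactness argument. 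The technical caveats you flag at the end (finitely many recurring canonical triangulations, applicability of \lemref{lemma:boundary_of_config_space} to the limiting normalized weights) are resolved exactly as in the paper by passing to a subsequence on which the tessellation is constant, so they do not constitute a gap.
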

\begin{proof}
   Let
   \(
      (h^n)_{n\geq0}\in
      \big\{h\in\RR^{\verts}:\sum h_i=0\big\}
   \)
   with $\lim_{i\to\infty}\|h^i\|=\infty$.
   By maybe considering a subsequence, we can assume that the canonical
   tessellation induced by the weights $\ee^{h^n}$ is constant for large enough
   $n$ (\propref{prop:properties_of_weightings}). For simplicities sake we
   assume that it was constant from the start. Denote one of its triangular
   refinements by $\tri$.

   We introduce the auxiliary sequence $(\tilde{h}^n)_{n\geq0}\in\RR^{\verts}$ with
   \[
      \tilde{h}_i^n
      \;\coloneqq\;
      h_i^n-\max_{j\in\verts}h_j^n
      \;\leq\;0
   \]
   for all $n\geq0$ and $i\in\verts$.
   Let $\verts_{\infty}$ be the subset of vertices $i\in\verts$ with
   $\lim_{n\to\infty}\tilde{h}_i^n=-\infty$. Note that
   $\emptyset\neq\verts_{\infty}\neq\verts$
   because of our assumptions about $(h^n)_{n\geq0}$.
   Since the $\theta_i^n$ satisfy the Gau{\ss}--Bonnet condition
   \eqref{eq:euclidean_gauss_bonnet_condition},
   $\sup_{n\geq0}|\Theta_i-\theta_i^n|<\infty$ for all $i\in\verts$. Thus, there
   is an $M>0$ such that $(\Theta_i-\theta_i^n)h_i^n<M$ for all $n\geq0$ and
   $i\in\verts\setminus\verts_{\infty}$. It follows
   \begin{align}
      \HE_{\surf_g,\Theta}(h^n)
      &=
      \HE_{\surf_g,\Theta,\tri}(\tilde{h}^n)\\
      &=
      -2\vol(P_{\tilde{h}^n})
      + \sum_{i\in\verts}\;\,(\Theta_i-\theta_i^n)\tilde{h}_i^n
      + \sum_{ij\in\edges_\tri}(\pi-\alpha_{ij}^n)\lambda_{ij}\\
      &\leq -2\vol(P_{\tilde{h}^n})
      + \sum_{i\in\verts_{\infty}}(\Theta_i-\theta_{i}^n)\tilde{h}_i^n
      + M\,|\verts\setminus\verts_{\infty}|
      + \pi|\edges_\tri|\max_{ij\in\edges_\tri}\lambda_{ij}.
   \end{align}
   Here, the first equality follows from the shift-invariants
   of $\HE_{\surf_g,\Theta}$ (\teqref{eq:euclidean_he_functional_shift_invariance})
   and $\tri$ (\propref{prop:properties_of_weightings}).
   \lemref{lemma:euclidean_angle_limit} shows that
   $\lim_{n\to\infty}(\Theta_i-\theta_i^n) > 0$. The coercivity of
   $\HE_{\surf_g,\Theta}$ follows.
\end{proof}

\begin{lemma}
   \label{lemma:euclidean_angle_limit}
   Let $\tri$ be a canonical triangulation of $\surf_g$ and
   $(h^n)_{n\geq0}\in\mathcal{P}_{\tri}(\surf_g)\cap\RR_{\leq0}^{\verts}$.
   Define the subset $\verts_{\infty}$ of
   vertices $i\in\verts$ with $\lim_{n\to\infty}h_i^n=-\infty$.
   Suppose that $\verts_{\infty}\neq\verts$. Then for each $i\in\verts_{\infty}$
   follows $\lim_{n\to\infty}\theta_i^n=0$.
\end{lemma}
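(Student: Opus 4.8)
The plan is to recast everything through the vertex radii $r_i^n=\ee^{-h_i^n}$ (see \eqref{eq:radii_to_heights}), so that $i\in\verts_\infty$ means precisely $r_i^n\to\infty$, while $r_i^n\geq1$ throughout because $h^n\in\RR_{\leq0}^{\verts}$. Since all the $h^n$ define convex polyhedral cusps over the \emph{same} surface $\surf_g$, the decorated PE-metrics they induce are mutually discretely conformally equivalent; hence the inversive distances $I_{ij}$ are fixed constants (determined by $\surf_g$ and $\tri$ via $I_{ij}=\cosh\lambda_{ij}$), and rearranging \eqref{eq:def_inversive_distance} gives the edge-length relation $\len_{ij}^2=(r_i^n)^2+(r_j^n)^2+2\,r_i^nr_j^n\,I_{ij}$. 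The cone angles $\theta_i^n$ depend only on these lengths and are invariant under $h\mapsto h+c\bm{1}_{\verts}$.

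\textbf{Locating the vanishing weights via \lemref{lemma:boundary_of_config_space}.} First I would fix a vertex $j_0\notin\verts_\infty$ (one exists since $\verts_\infty\neq\verts$) and, using compactness of $\Delta^{|V|-1}$, pass to a subsequence along which the normalized weights $\hat\omega^n\coloneqq\ee^{h^n}/\sum_k\ee^{h_k^n}$ converge to some $\bar\omega$ and $r_{j_0}^n$ converges to a finite limit $\geq1$. Because $\ee^{h^n}\in\mathcal{D}_{\tri}(\surf_g)$ and the latter is the strictly positive part of a closed polyhedral cone (\propref{prop:properties_of_weightings}\,\ref{item:canonical_cell_characterization}), the limit satisfies $\bar\omega\in\overline{\mathcal{D}_{\tri}(\surf_g)}\cap\Delta^{|V|-1}$. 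For $i\in\verts_\infty$ one has $\hat\omega_i^n/\hat\omega_{j_0}^n=r_{j_0}^n/r_i^n\to0$, and since $\hat\omega_{j_0}^n\leq1$ this forces $\bar\omega_i=0$. As $\bar\omega$ has vanishing coordinates it lies in $\partial\mathcal{D}_{\tri}(\surf_g)\cap\Delta^{|V|-1}$, so it witnesses $\verts_\infty\subseteq\verts_0$ in the notation of \lemref{lemma:boundary_of_config_space}.

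\textbf{Neighbor control and the angle limit.} \lemref{lemma:boundary_of_config_space} now gives that $\tri$ has no edge joining two vertices of $\verts_0$, hence none joining two vertices of $\verts_\infty$. Therefore every triangle $ijk$ incident to $i\in\verts_\infty$ has $j,k\notin\verts_\infty$, and in fact $j,k\notin\verts_0$; since $\bar\omega$ is an admissible boundary point, this yields $\bar\omega_j,\bar\omega_k>0$, whence $r_j^n/r_i^n=\hat\omega_i^n/\hat\omega_j^n\to0$ and likewise $r_k^n/r_i^n\to0$. Dividing the law of cosines $\cos\theta_{jk}^i=\frac{\len_{ij}^2+\len_{ik}^2-\len_{jk}^2}{2\,\len_{ij}\len_{ik}}$ by $2(r_i^n)^2$ and inserting the edge-length relation, the numerator tends to $1+(r_j^n/r_i^n)I_{ij}+(r_k^n/r_i^n)I_{ik}-(r_j^nr_k^n/(r_i^n)^2)I_{jk}\to1$ and the denominator to $1$ (the $I$'s being fixed), so $\theta_{jk}^i\to0$. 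Summing over the finitely many corners at $i$ gives $\theta_i^n\to0$ along the subsequence, and since the argument applies to every subsequence, the full sequence converges, proving the claim.

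\textbf{Main obstacle.} The one genuinely delicate point is the bookkeeping of the second step: identifying $\verts_\infty$ with a subset of the vanishing locus $\verts_0$ of a limiting normalized weight, and guaranteeing that the neighbors of a degenerating vertex keep strictly positive weight. This is exactly the content supplied by \lemref{lemma:boundary_of_config_space}, and the normalization ensuring $r_{j_0}^n$ stays finite (legitimate precisely because $\verts_\infty\neq\verts$) is what prevents the whole weight vector from collapsing and makes $\bar\omega_i=0$ hold for all degenerating $i$. Once the neighbor ratios $r_j^n/r_i^n\to0$ are secured, the final angle estimate is routine.
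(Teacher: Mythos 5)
Your proof is correct and follows essentially the same route as the paper's: both rely on \lemref{lemma:boundary_of_config_space} to guarantee that the neighbours of a vertex in $\verts_{\infty}$ do not degenerate, and then conclude via the euclidean law of cosines that every corner angle at such a vertex vanishes. You are in fact more explicit than the paper in one step, namely in deriving $\verts_{\infty}\subseteq\verts_0$ from a limit point of the normalized weights in $\Delta^{|V|-1}$, which the paper compresses into the single assertion that $\inf_n h_j^n>-\infty$ for the neighbours $j$.
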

\begin{proof}
   By definition, the $\theta_i^n$ is the sum of all angles at $i$ in triangles
   incident to $i$. Therefore, it suffices to show that each of these angles
   vanishes as $n$ tends to $\infty$.

   To this end, let $ijk\in\faces_\tri$ be a triangle incident to $i$. We observe
   that $\verts_{\infty}\neq\verts$ together with
   \lemref{lemma:boundary_of_config_space} implies
   \begin{equation}\label{eq:euclidean_limit_bounded_heights}
      \inf_{n\geq0}h_j^n
      \;>\;
      -\infty.
   \end{equation}
   Combining \teqref{eq:height_to_log_factors} and \teqref{eq:conformal_change_l},
   we see that $\lim_{n\to\infty}\len_{ij}^n=\infty$. Note that we interchanged the
   role of $\len_{ij}$ and $\tilde{\len}_{ij}$. Similarly, we see that
   $\lim_{n\to\infty}\len_{ik}^n=\infty$. Furthermore,
   $\sup_{n\geq0}\len_{jk}^n<\infty$ because of
   \teqref{eq:euclidean_limit_bounded_heights}. Thus, using the euclidean law of
   cosines, $\lim_{n\to\infty}\big(\theta_{jk}^i\big)^n=0$, as was to be proved.
\end{proof}

\section*{Acknowledgements}
This work was funded by the \emph{Deutsche Forschungsgemeinschaft}
(\emph{DFG -- German Research Foundation}) -- Project-ID 195170736 -- TRR109. The
authors wish to thank \textsc{Boris Springborn} for many interesting and
inspiring discussions.

\bibliographystyle{plain}
\bibliography{references.bib}

\begin{thebibliography}{10}

\bibitem{Akiyoshi2000}
Hirotaka Akiyoshi.
\newblock Finiteness of {{Polyhedral Decompositions}} of {{Cusped Hyperbolic
  Manifolds Obtained}} by the {{Epstein-Penner}}'s {{Method}}.
\newblock {\em Proc. Amer. Math. Soc.}, 129(8):2431--2439, December 2000.

\bibitem{Alexandrov2005}
Alexander~D. Alexandrov.
\newblock {\em Convex Polyhedra}.
\newblock Springer Monographs in Mathematics. {Springer}, {New York}, 2005.

\bibitem{Andreev1970}
E.~M. Andreev.
\newblock On convex polyhedra in {{Loba\v{c}evski{\u \i}}} space.
\newblock {\em Math. USSR Sb.}, 10(3):413--440, April 1970.

\bibitem{andreev1970a}
E.~M. Andreev.
\newblock On convex polyhedra of finite volume in {{Loba\v{c}evski{\u \i}}}
  space.
\newblock {\em Math. USSR Sb.}, 12(2):255--259, February 1970.

\bibitem{Beardon1983}
Alan~F. Beardon.
\newblock {\em The {{Geometry}} of {{Discrete Groups}}}, volume~91 of {\em
  Graduate {{Texts}} in {{Mathematics}}}.
\newblock {Springer}, {New York}, 1983.

\bibitem{BDS2017}
Alexander~I. Bobenko, Nikolay Dimitrov, and Stefan Sechelmann.
\newblock Discrete {{Uniformization}} of {{Polyhedral Surfaces}} with
  {{Non-positive Curvature}} and {{Branched Covers}} over the {{Sphere}} via
  {{Hyper-ideal Circle Patterns}}.
\newblock {\em Discrete Comput. Geom.}, 57(2):431--469, March 2017.

\bibitem{BI2008}
Alexander~I. Bobenko and Ivan Izmestiev.
\newblock Alexandrov's theorem, weighted {{Delaunay}} triangulations, and mixed
  volumes.
\newblock {\em Ann. inst. Fourier}, 58(2):447--505, 2008.

\bibitem{BPS2015}
Alexander~I. Bobenko, Ulrich Pinkall, and Boris Springborn.
\newblock Discrete conformal maps and ideal hyperbolic polyhedra.
\newblock {\em Geom. Topol.}, 19(4):2155--2215, July 2015.

\bibitem{BS2007}
Alexander~I. Bobenko and Boris Springborn.
\newblock A discrete {{Laplace-Beltrami}} operator for simplicial surfaces.
\newblock {\em Discrete Comput. Geom.}, 38(4):740--756, December 2007.

\bibitem{BH2003}
Philip~L. Bowers and Monica~K. Hurdal.
\newblock Planar {{Conformal Mappings}} of {{Piecewise Flat Surfaces}}.
\newblock In Hans-Christian Hege and Konrad Polthier, editors, {\em
  Visualization and {{Mathematics III}}}, pages 3--34. {Springer}, {Berlin},
  2003.

\bibitem{BS2004}
Philip~L. Bowers and Kenneth Stephenson.
\newblock {\em Uniformizing Dessins and {{Bely{\u \i}}} Maps via Circle
  Packing}, volume 170 of {\em Memoirs of the {{AMS}}}.
\newblock {American Mathematical Society}, 2004.

\bibitem{CFK+1997}
James~W. Cannon, William~J. Floyd, Richard Kenyon, and Walter Parry.
\newblock Hyperbolic {{Geometry}}.
\newblock In Silvio Levy, editor, {\em Flavors of {{Geometry}}}, volume~31 of
  {\em {{MSRI Publications}}}. {Cambridge University Press}, 1997.

\bibitem{CL2003}
Bennett Chow and Feng Luo.
\newblock Combinatorial {{Ricci Flows}} on {{Surfaces}}.
\newblock {\em J. Differential Geom.}, 63(1):97--129, January 2003.

\bibitem{CHK2000}
Daryl Cooper, Craig~D. Hodgson, and Steve Kerckhoff.
\newblock {\em Three-Dimensional Orbifolds and Cone-Manifolds}, volume~5 of
  {\em {{MSJ}} Memoirs}.
\newblock {Mathematical Society of Japan}, {Tokyo}, 2000.

\bibitem{Coxeter1966a}
Harold S.~M. Coxeter.
\newblock Inversive distance.
\newblock {\em Annali di Matematica}, 71(1):73--83, December 1966.

\bibitem{EP1988}
David B.~A. Epstein and Robert~C. Penner.
\newblock Euclidean decompositions of noncompact hyperbolic manifolds.
\newblock {\em J. Differential Geom.}, 27(1):67--80, 1988.

\bibitem{Fillastre2008}
Fran{\c c}ois Fillastre.
\newblock Polyhedral hyperbolic metrics on surfaces.
\newblock {\em Geom. Dedicata}, 134(1):177--196, June 2008.

\bibitem{FI2009}
Fran{\c c}ois Fillastre and Ivan Izmestiev.
\newblock Hyperbolic cusps with convex polyhedral boundary.
\newblock {\em Geom. Topol.}, 13(1):457--492, January 2009.

\bibitem{GKZ1994}
Israel~M. Gelfand, Mikhail~M. Kapranov, and Andrei~V. Zelevinsky.
\newblock {\em Discriminants, {{Resultants}}, and {{Multidimensional
  Determinants}}}.
\newblock {Birkh\"auser Boston}, {Boston, MA}, 1994.

\bibitem{GSC2021}
Mark Gillespie, Boris Springborn, and Keenan Crane.
\newblock Discrete conformal equivalence of polyhedral surfaces.
\newblock {\em ACM Trans. Graph.}, 40(4):1--20, August 2021.

\bibitem{Glickenstein2011}
David Glickenstein.
\newblock Discrete conformal variations and scalar curvature on piecewise flat
  two- and three-dimensional manifolds.
\newblock {\em J. Differential Geom.}, 87(2):201--238, February 2011.

\bibitem{GT2017}
David Glickenstein and Joseph Thomas.
\newblock Duality structures and discrete conformal variations of piecewise
  constant curvature surfaces.
\newblock {\em Advances in Mathematics}, 320:250--278, November 2017.

\bibitem{GLW2019}
David Gu, Feng Luo, and Tianqi Wu.
\newblock Convergence of discrete conformal geometry and computation of
  uniformization maps.
\newblock {\em Asian Journal of Mathematics}, 23(1):21--34, 2019.

\bibitem{GGL+2018}
Xianfeng~D. Gu, Ren Guo, Feng Luo, Jian Sun, and Tianqi Wu.
\newblock A discrete uniformization theorem for polyhedral surfaces {{II}}.
\newblock {\em J. Differential Geom.}, 109(3), July 2018.

\bibitem{GLS+2018}
Xianfeng~D. Gu, Feng Luo, Jian Sun, and Tianqi Wu.
\newblock A discrete uniformization theorem for polyhedral surfaces.
\newblock {\em J. Differential Geom.}, 109(2), June 2018.

\bibitem{Guo2011a}
Ren Guo.
\newblock Local rigidity of inversive distance circle packing.
\newblock {\em Trans. Amer. Math. Soc.}, 363(9):4757--4776, 2011.

\bibitem{Hertrich-Jeromin2003}
Udo {Hertrich-Jeromin}.
\newblock {\em Introduction to {{M\"obius Differential Geometry}}}.
\newblock {Cambridge University Press}, first edition, August 2003.

\bibitem{JLS2020}
Michael Joswig, Robert L{\"o}we, and Boris Springborn.
\newblock Secondary fans and secondary polyhedra of punctured {{Riemann}}
  surfaces.
\newblock {\em Exp. Math.}, 29(4):426--442, 2020.

\bibitem{Kellerhals1989}
Ruth Kellerhals.
\newblock On the volume of hyperbolic polyhedra.
\newblock {\em Math. Ann.}, 285:541--569, 1989.

\bibitem{Koebe1936}
Paul Koebe.
\newblock Kontaktprobleme der konformen {{Abbildung}}.
\newblock {\em Ber. S\"achs. Akad. Wiss. Leipzig}, 88:141--164, 1936.

\bibitem{Lewin1981}
Leonard Lewin.
\newblock {\em Polylogarithms and {{Associated Functions}}}.
\newblock {North-Holland Publishing Co.}, {New York}, 1981.

\bibitem{Luo2004}
Feng Luo.
\newblock Combinatorial {{Yamabe}} flow on surfaces.
\newblock {\em Commun. Contemp. Math.}, 6(5):765--780, 2004.

\bibitem{LSW2022}
Feng Luo, Jian Sun, and Tianqi Wu.
\newblock Discrete conformal geometry of polyhedral surfaces and its
  convergence.
\newblock {\em Geom. Topol.}, 26(3):937--987, August 2022.

\bibitem{LWZ2023}
Yanwen Luo, Tianqi Wu, and Xiaoping Zhu.
\newblock The {{Convergence}} of {{Discrete Uniformizations}} for {{Genus Zero
  Surfaces}}.
\newblock {\em Discrete Comput Geom}, January 2023.

\bibitem{Lutz2023}
Carl O.~R. Lutz.
\newblock Canonical tessellations of decorated hyperbolic surfaces.
\newblock {\em Geom. Dedicata}, 217(2):14, April 2023.

\bibitem{Milnor1982}
John Milnor.
\newblock Hyperbolic geometry: {{The}} first 150 years.
\newblock {\em Bull. Amer. Math. Soc.}, 6(1):9--24, 1982.

\bibitem{Milnor1994a}
John Milnor.
\newblock How to compute volume in hyperbolic space.
\newblock In {\em Collected Papers. 1: {{Geometry}}}. {Publish or Perish},
  {Houston, Tex}, 1994.

\bibitem{Milnor1994b}
John Milnor.
\newblock The {{Schl\"afli}} differential equality.
\newblock In {\em Collected Papers. 1: {{Geometry}}}. {Publish or Perish},
  {Houston, Tex}, 1994.

\bibitem{Penner1987}
Robert~C. Penner.
\newblock The {{Decorated Teichm$\ddot{\upsilon}$ller Space}} of {{Punctured
  Surfaces}}.
\newblock {\em Commun. Math. Phys.}, (113):299--339, 1987.

\bibitem{PP1993}
Ulrich Pinkall and Konrad Polthier.
\newblock Computing discrete minimal surfaces and their conjugates.
\newblock {\em Experiment. Math.}, 2(1):15--36, 1993.

\bibitem{Prosanov2020a}
Roman Prosanov.
\newblock Ideal polyhedral surfaces in {{Fuchsian}} manifolds.
\newblock {\em Geom Dedicata}, 206(1):151--179, June 2020.

\bibitem{Ratcliffe1994}
John~G. Ratcliffe.
\newblock {\em Foundations of {{Hyperbolic Manifolds}}}, volume 149 of {\em
  Graduate {{Texts}} in {{Mathematics}}}.
\newblock {Springer New York}, {New York}, 1994.

\bibitem{Regge1961}
T.~Regge.
\newblock General relativity without coordinates.
\newblock {\em Nuovo Cim.}, 19:558--571, 1961.

\bibitem{Rivin1994}
Igor Rivin.
\newblock Euclidean structures on simplicial surfaces and hyperbolic volume.
\newblock {\em Ann. Math.}, 139(3):553--580, 1994.

\bibitem{RW1984}
M.~Ro{\v c}ek and R.~M. Williams.
\newblock The quantization of {{Regge}} calculus.
\newblock {\em Z. Phys. C - Particles and Fields}, 21(4):371--381, December
  1984.

\bibitem{RS1987}
Burt Rodin and Dennis Sullivan.
\newblock The convergence of circle packings to the {{Riemann}} mapping.
\newblock {\em J. Differential Geom.}, 26(2), January 1987.

\bibitem{Schlenker2008}
Jean-Marc Schlenker.
\newblock Circle {{Patterns}} on {{Singular Surfaces}}.
\newblock {\em Discrete Comput Geom}, 40(1):47--102, July 2008.

\bibitem{Springborn2008}
Boris Springborn.
\newblock A variational principle for weighted {{Delaunay}} triangulations and
  hyperideal polyhedra.
\newblock {\em J. Differential Geom.}, 78(2):333--367, February 2008.

\bibitem{Springborn2020}
Boris Springborn.
\newblock Ideal {{Hyperbolic Polyhedra}} and {{Discrete Uniformization}}.
\newblock {\em Discrete Comput. Geom.}, 64:63--108, 2019.

\bibitem{SSP2008}
Boris Springborn, Peter Schr{\"o}der, and Ulrich Pinkall.
\newblock Conformal equivalence of triangle meshes.
\newblock {\em ACM Trans. Graph.}, 27(3):1--11, August 2008.

\bibitem{Stephenson2005}
Kenneth Stephenson.
\newblock {\em Introduction to Circle Packing: The Theory of Discrete Analytic
  Functions}.
\newblock {Cambridge University Press}, {New York}, 2005.

\bibitem{Thurston1978}
William~P. Thurston.
\newblock The geometry and topology of three-manifolds.
\newblock 1978.

\bibitem{Ushijima2006}
Akira Ushijima.
\newblock A {{Volume Formula}} for {{Generalised Hyperbolic Tetrahedra}}.
\newblock In Andr{\'a}s Pr{\'e}kopa and Emil Moln{\'a}r, editors, {\em
  Non-{{Euclidean Geometries}}}, volume 581, pages 249--265. {Kluwer Academic
  Publishers}, {Boston}, 2006.

\bibitem{Weeks1993}
Jeffrey~R. Weeks.
\newblock Convex hulls and isometries of cusped hyperbolic 3-manifolds.
\newblock {\em Topology Appl.}, 52(2):127--149, October 1993.

\bibitem{ZGZ+2014}
Min Zhang, Ren Guo, Wei Zeng, Feng Luo, Shing-Tung Yau, and Xianfeng Gu.
\newblock The unified discrete surface {{Ricci}} flow.
\newblock {\em Graphical Models}, 76(5):321--339, September 2014.

\bibitem{Zhu2022}
Xiaoping Zhu.
\newblock {\em The Convergence Theorem of Discrete Uniformization Factors on
  Closed Surfaces}.
\newblock PhD thesis, Rutgers University, {New Brunswick, NJ, USA}, 2022.

\end{thebibliography}

~\\
\begin{flushright}
   \noindent
   \textit{%
      Technische Universit\"at Berlin\\
      Institut f\"ur Mathematik\\
      Str.\ des 17.\ Juni 136\\
      10623 Berlin\\
      Germany
   }\\
   ~\\
   \noindent
   \texttt{bobenko@math.tu-berlin.de}\\
   \url{https://page.math.tu-berlin.de/~bobenko},\\
   ~\\
   \noindent
   \texttt{clutz@math.tu-berlin.de}\\
   \url{https://page.math.tu-berlin.de/~clutz}
\end{flushright}

\end{document}